\theoremstyle{plain}
 \newtheorem{theorem}{Theorem}
 \newtheorem{lemma}[theorem]{Lemma}
 \newtheorem{corollary}[theorem]{Corollary}
\newtheorem{proposition}[theorem]{Proposition}
\newtheorem{claim}{Claim}[theorem]
\theoremstyle{definition}
 \newtheorem{conjecture}[theorem]{Conjecture}
\newcommand{\inv}{\overleftarrow}
\newcommand{\pb}{{\sc Subdivision}}
\newenvironment{subproof}{\par\noindent {\it Subproof}.\ }{\hfill$\lozenge$\par\vspace{11pt}}
\title{Finding a subdivision of a prescribed digraph of order $4$}
\author{
Fr\'ed\'eric Havet\thanks{Partially supported by the ANR project STINT.}~~\thanks{This work was mainly done when F. Havet was on leave at Simon Fraser University, PIMS, UMI 3069, CNRS.}
\\
COATI Project, I3S (CNRS/UNSA) {\scriptsize \&} INRIA
\\ 2004 route des Lucioles BP 93,\\
06902 Sophia-Antipolis Cedex, France\\
 email: {\tt frederic.havet@cnrs.fr}
\and A. Karolinna Maia\thanks{Partially supported by CAPES/Brazil.}\\
ParGO, Department of Computer Science, UFC\\
Campus do Pici, Bloco 910,\\
60455760 - Fortaleza, CE, Brazil\\
 email: {\tt karolmaia@lia.ufc.br}
\and
Bojan Mohar\thanks{Supported in part by the
  Research Grant P1--0297 of ARRS (Slovenia), by an NSERC Discovery Grant (Canada)
  and by the Canada Research Chair program.}~\thanks{On leave from:
  IMFM \& FMF, Department of Mathematics, University of Ljubljana, Ljubljana,
  Slovenia.}\\
Department of Mathematics, Simon Fraser University\\
  Burnaby, B.C. V5A 1S6 \\
  email: {\tt mohar@sfu.ca}
}
\begin{document}

\maketitle

\begin{abstract}
The problem of when a given digraph contains a subdivision of a fixed digraph $F$ is considered.
Bang-Jensen et al.\ \cite{BHM} laid out foundations for approaching this problem from the algorithmic point of view. In this paper we give further support to several open conjectures and speculations about algorithmic complexity of finding $F$-subdivisions.
In particular, up to 5 exceptions, we completely classify for which 4-vertex digraphs $F$, the $F$-subdivision problem is polynomial-time solvable and for which it is NP-complete.
While all NP-hardness proofs are made by reduction from some version of the 2-linkage problem in digraphs,
some of the polynomial-time solvable cases involve relatively complicated algorithms.
\end{abstract}


\section{Introduction}

In this paper, all digraphs are meant to be {\em strict}, that is without loops and without multiple arcs. In one occasion, however, multiple arcs will be allowed. In that case, we will use the term {\em multidigraph}. We follow standard terminology as used in \cite{livre:digraph,BoMu08}.

A {\it subdivision of a digraph $F$}, also called an {\it $F$-subdivision}, is a digraph obtained from $F$ by replacing each
arc $ab$ of $F$ by a directed $(a,b)$-path.
In this paper, we consider the following problem for a fixed digraph $F$.

\medskip

\noindent {\sc $F$-Subdivision}\\
\underline{Input}: A digraph $D$.\\
\underline{Question}: Does $D$ contain a subdivision of $F$ as a subdigraph?

\medskip

Bang-Jensen et al.~\cite{BHM} conjectured that there is a dichotomy between NP-complete and polynomial-time solvable instances.
\begin{conjecture}\label{dichotomy}
For every digraph $F$, the $F$-\pb\ problem is polynomial-time solvable or NP-complete.
\end{conjecture}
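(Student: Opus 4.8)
The plan is to establish the dichotomy by exhibiting, for each digraph $F$, a concrete structural reason that places $F$-\pb\ on one side of the divide. Since the statement is quantified over \emph{all} $F$, I would not expect a single self-contained argument; the natural program is instead to isolate a structural property $\Pi$ of digraphs such that $\Pi(F)$ forces a polynomial-time algorithm while its negation forces NP-completeness, and then to prove that every $F$ satisfies exactly one of the two. The conjecture is true \emph{a fortiori} if such a $\Pi$ exists, so the whole effort reduces to finding the right invariant.

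For the hardness direction, the guiding principle is that the directed $2$-linkage problem---deciding whether a digraph admits two vertex-disjoint directed paths joining prescribed pairs $(s_1,t_1)$ and $(s_2,t_2)$---is NP-complete. Thus the first step is to identify which $F$ are ``rich enough'' to encode a $2$-linkage instance inside the search for an $F$-subdivision: typically one pins down all branch vertices of $F$ except the endpoints of two arcs, forces the remaining arcs to be realised along rigid gadget paths, and then reads off the two free arcs as the two required linkage paths. Carrying this out uniformly---and in particular pinning down the \emph{minimal} digraphs $F$ that admit such an encoding---is where the combinatorial work concentrates.

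For the tractable direction, when $F$ is too simple to support a $2$-linkage gadget one must produce a polynomial algorithm, and the available tools depend on the shape of $F$. For $F$ with few arcs or with a clean source/sink structure one can often reduce to a bounded number of ordinary single-source disjoint-path or max-flow computations; for more delicate $F$ one resorts to dynamic programming over the possible placements of the branch vertices, or to an explicit case analysis of how the subdivision paths may cross. As the abstract signals, some of these algorithms are genuinely intricate already for $|V(F)|=4$, so I would not expect the tractable side to follow from a single meta-theorem.

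The main obstacle is precisely that, unlike the constraint-satisfaction dichotomy, the $F$-\pb\ problem carries no clean algebraic invariant (such as a polymorphism condition) that separates the two regimes in one stroke. Consequently the property $\Pi$ must be discovered essentially empirically, the polynomial algorithms are not instances of a uniform scheme, and the borderline cases---five of which remain unresolved even in order $4$---resist both a hardness gadget and an obvious algorithm. Proving the conjecture in full therefore appears to demand either a genuinely new unifying framework for directed topological containment, or an exhaustive structural classification; the present paper pursues the latter route, restricted to digraphs on four vertices, as evidence that the dichotomy holds.
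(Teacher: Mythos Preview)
The statement you are attempting to prove is labelled a \emph{conjecture} in the paper, and the paper contains no proof of it. Bang-Jensen et al.\ posed it as an open problem; the present paper does not settle it but only supplies evidence by classifying digraphs of order~$4$ (and even there five cases remain open). So there is no ``paper's own proof'' to compare your proposal against.

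Your proposal is, in any case, not a proof: it is a sketch of a research programme. You yourself say so in the final paragraph---``Proving the conjecture in full therefore appears to demand either a genuinely new unifying framework \dots\ or an exhaustive structural classification.'' The concrete mathematical content required to establish the dichotomy---namely, the identification of a structural invariant~$\Pi$ and the two implications $\Pi(F)\Rightarrow$ polynomial and $\lnot\Pi(F)\Rightarrow$ NP-complete---is entirely missing, and you correctly note that no such invariant is currently known. What you have written is a reasonable informal commentary on why the conjecture is hard and what the paper contributes towards it, but it should not be presented as a proof attempt.
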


According to this conjecture, there are only two kinds of digraphs $F$:
{\it intractable} digraphs $F$, for which $F$-\pb\ is NP-complete, and {\it tractable} digraphs, for which $F$-\pb\ is solvable in polynomial-time.

Bang-Jensen et al.~\cite{BHM} proved that many digraphs are intractable; see Theorem~\ref{NP-big} in Section~\ref{sec:hard}.
In particular, every digraph in which every vertex $v$ is {\it big} (that is such
that either $d^+(v)\geq 3$, or $d^-(v)\geq 3$, or $d^-(v)=d^+(v)=2$) is intractable.
They also give many examples of tractable digraphs. See Subsection~\ref{sec:known}.
However, there is no clear evidence, of which graph should be tractable and which one should be intractable, despite some results and conjectures give some outline.

Establishing a conjecture of of Johnson et al. \cite{JRST82}, Kawabarayashi and Kreutzer~\cite{KaKr14} proved the Directed Grid Theorem.

\begin{theorem}[Kawabarayashi and Kreutzer~\cite{KaKr14}]\label{thm:DGT}
For any positive integer $k$, there exists an integer $f(k)$ such that every digraph with directed treewidth greater than $f(k)$ contains a cylindrical grid of order $k$ as a butterfly minor.
\end{theorem}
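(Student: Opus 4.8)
The plan is to follow the directed analogue of the Robertson--Seymour grid theorem, passing through three layers: a duality statement that converts large directed treewidth into a concrete connectivity obstruction, an intermediate path-based connectivity object extracted from that obstruction, and finally the combinatorial extraction of the grid itself. The reason for this architecture is that directed treewidth is defined through a global decomposition, whereas the object we seek, a cylindrical grid, is explicit and local; the middle layers serve to translate between the two.

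First I would invoke the duality between directed treewidth and directed \emph{brambles} (equivalently, havens of large order): there is a function $g$ such that every digraph of directed treewidth greater than $g(k')$ contains a bramble of order at least $k'$. Choosing $k'=k'(k)$ appropriately, this produces a highly tangled region that cannot be separated by a small number of vertices, replacing the decomposition parameter by a path-based certificate on which the rest of the argument can operate. Second, from a bramble of large order I would extract a large \emph{well-linked set}: a vertex set $A$ such that for all equal-size $X,Y\subseteq A$ there is a linkage (a family of vertex-disjoint directed paths) from $X$ to $Y$, with the order of well-linkedness proportional, up to a function of $k$, to the bramble order. From $A$ I would then build a \emph{path system}: a large family of vertex-disjoint directed paths, pairwise linked in a controlled fashion, whose union retains the connectivity of $A$ but now carries an explicit path-like skeleton.

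The heart of the proof, and the main obstacle, is the final step: extracting a cylindrical grid of order $k$ as a \emph{butterfly} minor from the path system. The target is $k$ nested directed cycles (the concentric circles of the cylinder) together with $k$ consistently oriented directed paths crossing all of them (the meridians), realized as a butterfly minor --- recall that a butterfly contraction is permitted only on an arc $uv$ with $d^+(u)=1$ or $d^-(v)=1$, precisely so that no spurious directed paths are created and the directed treewidth is not artificially inflated. The difficulty, entirely absent in the undirected grid theorem, is that the paths of the system are oriented and may \emph{jump}, i.e. cross many other paths in an inconsistent way, which locally destroys any grid structure. Controlling these jumps requires an iterative cleaning procedure combined with Ramsey-type pigeonhole arguments, repeatedly passing to a large sub-system in which the crossings become regular, until the nested-cycle-plus-meridian pattern can be read off directly. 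This extraction, together with the bookkeeping needed to keep every intermediate quantity bounded by a function of $k$ alone, is where essentially all of the technical weight of the Kawarabayashi--Kreutzer argument lies; I would expect this to be by far the longest and most delicate part of any faithful reconstruction.
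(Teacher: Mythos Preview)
The paper does not contain a proof of this theorem at all. Theorem~\ref{thm:DGT} is the Directed Grid Theorem of Kawarabayashi and Kreutzer, and the present paper simply \emph{cites} it as an external result (from~\cite{KaKr14}) and uses it as a black box, together with its algorithmic version Theorem~\ref{thm:DGT-algo}, to derive Corollary~\ref{cor:nobig}. There is therefore nothing to compare your proposal against: the authors never attempt to reprove this statement, and your sketch --- whatever its merits as an outline of the actual Kawarabayashi--Kreutzer argument --- is addressing a task the paper does not undertake.

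If your goal was to supply the missing proof, be aware that what you have written is a high-level roadmap rather than a proof: each of the three layers you name (bramble/haven duality, extraction of a well-linked set and path system, and the cleaning/Ramsey step yielding the cylindrical grid) is itself a substantial piece of work, and the final step in particular occupies the bulk of the Kawarabayashi--Kreutzer paper. Your outline is broadly faithful to the structure of their argument, but it would not be appropriate to present it as a proof here; the correct thing to do, as the paper does, is to cite~\cite{KaKr14}.
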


Here, a {\it cylindrical grid} of order $k$ consists of $k$ concentric directed cycles and $2k$ directed paths connecting the cycles in alternating directions. See Figure~\ref{fig:grid} for an illustration. A {\it butterfly minor} of a digraph $D$ is a digraph obtained from a subgraph of $D$ by contracting arcs which are either the only outgoing arc of their tail or the only incoming arc of their head.

\begin{figure}[htb]
       \centering
       \includegraphics[height=3.8cm]{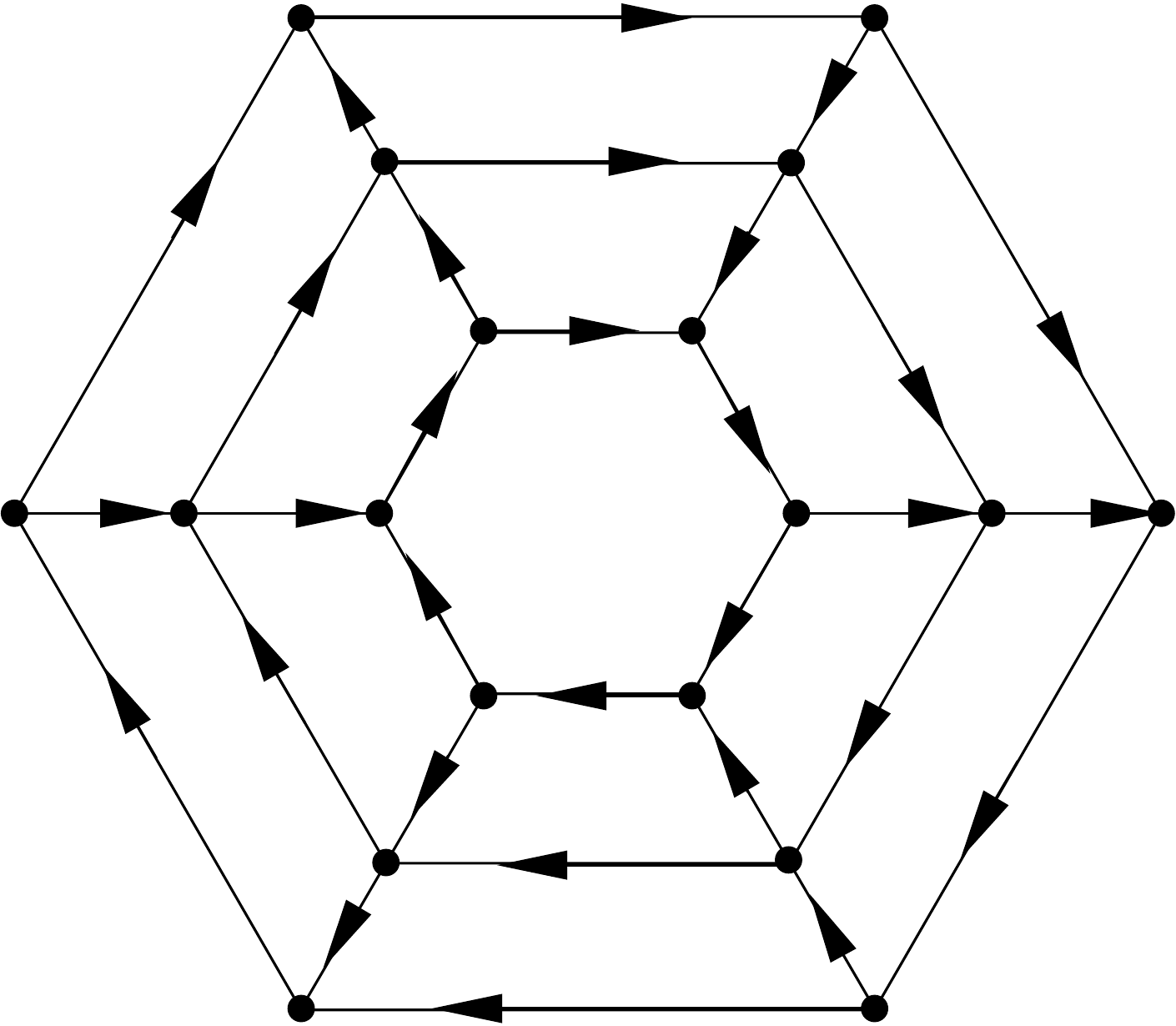}
       \caption{The cylindrical grid of order $3$.}
       \label{fig:grid}
\end{figure}

Moreover, their proof is algorithmic.
\begin{theorem}[Kawabarayashi and Kreutzer~\cite{KaKr14}]\label{thm:DGT-algo}
For any positive integer $k$, there exists an integer $f(k)$ such that given any digraph, in polynomial time, we can obtain either
\begin{itemize}
\item a cylindrical grid of order $k$ as a butterfly minor, or
\item a directed tree decomposition of width at most $f(k)$.
\end{itemize}
\end{theorem}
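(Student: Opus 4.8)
The plan is to follow the constructive proof of the (non-algorithmic) Directed Grid Theorem (Theorem~\ref{thm:DGT}), verifying that each step can be carried out in polynomial time and supplying the missing algorithmic ingredient that certifies large directed treewidth by an explicit obstruction. The overall shape is a \emph{win/win} procedure: given a digraph $D$ and the fixed integer $k$, we will in polynomial time either produce a directed tree decomposition of bounded width, or produce an explicit combinatorial obstruction from which a cylindrical grid of order $k$ can be extracted as a butterfly minor. Since Theorem~\ref{thm:DGT} already guarantees that one of the two outcomes exists once the threshold on directed treewidth is crossed, the entire difficulty lies in \emph{finding} whichever outcome occurs, rather than in proving that it exists.

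First I would establish an \emph{approximate algorithmic duality} for directed treewidth. There is a min--max relationship between directed treewidth and the order of a \emph{haven} (equivalently the size of a suitable \emph{well-linked set}), and the classical proofs of this duality are essentially constructive. So the first step is a polynomial-time algorithm that, for a threshold $w$ depending on $k$, either outputs a directed tree decomposition of width at most some function of $w$, or outputs a well-linked set $A$ whose size is large as a function of $k$. Here $A$ is well-linked if for all disjoint $X,Y\subseteq A$ with $|X|=|Y|$ there are $|X|$ pairwise vertex-disjoint directed paths from $X$ to $Y$; such routings are produced by repeated max-flow / Menger computations, which run in polynomial time. If the first alternative occurs we output the tree decomposition and stop, so we may assume we have a large well-linked set in hand.

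Second, from the large well-linked set I would assemble the grid in stages. Well-linkedness lets us route, in polynomial time via flow computations, large collections of pairwise disjoint directed paths linking prescribed halves of $A$; iterating this routing yields an intermediate \emph{path system} in which many long disjoint paths cross one another in a controlled pattern. The delicate part is then to reorganise these crossings so as to recover the cylindrical structure: $k$ concentric directed cycles together with $2k$ connecting paths running in alternating directions. This requires the directed analogues of the rerouting and ``cleaning'' arguments used for undirected grids, together with a careful analysis of how directed paths may interleave, since in the directed setting one cannot reverse a path and the contractions permitted in a butterfly minor are restricted (only arcs that are the unique out-arc of their tail or the unique in-arc of their head). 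All numerical losses are absorbed into $f(k)$; since $k$ is fixed, $f(k)$ may be astronomically large while the running time stays polynomial in $|V(D)|$.

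The main obstacle is precisely this last extraction step. Certifying large directed treewidth by a haven is comparatively standard and its algorithmic version is essentially routine; by contrast, converting a highly well-linked set into an honest cylindrical grid realised as a butterfly minor is the technical heart of the Kawarabayashi--Kreutzer argument. The difficulty is that directedness sharply limits how paths may be rerouted and recombined, so controlling the pattern in which the routed paths cross---and guaranteeing that the alternating concentric structure actually appears---demands the bulk of the effort, and ensuring that every such manipulation remains polynomial-time is exactly what upgrades the existential Theorem~\ref{thm:DGT} to the algorithmic Theorem~\ref{thm:DGT-algo}.
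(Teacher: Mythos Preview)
The paper does not prove this theorem at all: Theorem~\ref{thm:DGT-algo} is quoted verbatim from Kawarabayashi and Kreutzer~\cite{KaKr14} and used as a black box (to derive Corollary~\ref{cor:nobig}). There is therefore no ``paper's own proof'' to compare your proposal against.

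As to the proposal itself, what you outline is a fair high-level summary of the Kawarabayashi--Kreutzer strategy (approximate duality between directed treewidth and havens/well-linked sets, then an extraction of the cylindrical grid from a large well-linked set), and you correctly identify that the second step is the hard one. But this is a sketch of a sketch: the actual rerouting and cleaning arguments in the directed setting span dozens of pages and involve several intermediate structures (webs, fences, etc.) that your outline elides. For the purposes of \emph{this} paper none of that is needed; the intended ``proof'' here is simply a citation.
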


Because the $k$-{\sc Linkage} problem (See Section~\ref{sec:hard} for definitions and details on $k$-{\sc Linkage}) is polynomial-time solvable on digraph with bounded directed treewidth, for any fixed $F$, the $F$-\pb\ is also polynomial-time solvable on digraphs of bounded directed tree-width (see \cite{BHM} for more details.).
Moreover, by induction on the number of vertices, one can show that for any  planar digraph with no big vertices $F$, there is an integer $k_F$ such that the cylindrical grid of order $k_F$ contains an $F$-subdivision. Furthermore, if $F$ has no big vertices, then if a minor of $D$ has an $F$-subdivision, then so does $D$. (One can uncontract the arcs without any problem because the vertices of $F$ are not big).
All these directly imply the following:

\begin{corollary}\label{cor:nobig}
$F$-\pb\ is polynomial-time solvable when $F$ is a planar digraph with no big vertices.
\end{corollary}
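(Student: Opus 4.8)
The plan is to turn the algorithmic Directed Grid Theorem (Theorem~\ref{thm:DGT-algo}) into a win/win decision procedure, using the two structural facts recorded just before the statement: (a) there is a constant $k_F$ such that the cylindrical grid of order $k_F$ already contains an $F$-subdivision, and (b) since $F$ has no big vertices, an $F$-subdivision living in a butterfly minor of $D$ can be lifted to an $F$-subdivision of $D$. Both $k_F$ and $f(k_F)$ depend only on $F$, hence are constants for the fixed digraph $F$, which is what will keep everything polynomial.

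Given an input digraph $D$, I would first fix $k := k_F$ and run the algorithm of Theorem~\ref{thm:DGT-algo} with parameter $k$; this takes time polynomial in $|D|$ because $k$ is a constant. The algorithm returns exactly one of two objects, and I branch on which. If the output is a cylindrical grid of order $k_F$ as a butterfly minor of $D$, then by fact (a) this grid contains an $F$-subdivision, and fact (b) — whose un-contraction argument is precisely about reversing butterfly contractions — applies to this butterfly minor and, using that $F$ has no big vertices, lifts the subdivision up to $D$; hence the procedure answers \textsc{Yes}. Note that in this branch no further search is needed: the mere presence of the grid of order $k_F$ is already a certificate, which is exactly what makes the step cheap.

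If instead the output is a directed tree decomposition of width at most $f(k_F)$, then $D$ has bounded directed treewidth, the bound being the constant $f(k_F)$. As recalled in the excerpt, $k$-{\sc Linkage}, and therefore $F$-\pb\ for the fixed $F$, is polynomial-time solvable on digraphs of bounded directed treewidth; running that subroutine, fed with the tree decomposition just computed, decides the instance exactly. Since exactly one of the two branches is taken and each runs in polynomial time, the whole procedure is a polynomial-time decision algorithm, which proves the corollary.

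The only genuinely delicate point — already absorbed into fact (b), and the place where I would be most careful — is the lifting from a butterfly minor. One must check that un-contracting a butterfly-contracted arc preserves directed reachability, so that each subdivision path can be rerouted through the expanded vertex; the hypothesis that no vertex of $F$ is big is exactly what keeps every branch-vertex degree small enough that this rerouting succeeds simultaneously for all incident paths, with no arc used twice. If one wanted a fully self-contained argument, this un-contraction together with the inductive construction of $k_F$ in fact (a) would be the main things to verify; granting the two facts, the corollary is a clean assembly of the dichotomy provided by Theorem~\ref{thm:DGT-algo}.
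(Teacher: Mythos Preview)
Your proposal is correct and follows essentially the same win/win argument as the paper: run the algorithmic Directed Grid Theorem with parameter $k_F$, answer \textsc{Yes} in the grid branch (using the two facts (a) and (b) stated just before the corollary), and in the bounded-treewidth branch invoke the polynomial-time $F$-\pb\ algorithm for digraphs of bounded directed treewidth. If anything, you are slightly more explicit than the paper in spelling out that the grid is only guaranteed as a butterfly minor and that fact~(b) is what lifts the $F$-subdivision back to $D$.
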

\begin{proof} On can solve $F$-\pb\ as follows. Given a digraph $D$, one runs the algorithm given by Theorem~\ref{thm:DGT-algo}. If it returns a cylindrical grid of order $k_F$, then we return `Yes' as it contains an $F$-subdivision.
If it returns a directed tree decomposition of width at most $f(k_F)$, then one runs the polynomial-time algorithm to solve $F$-subdivision for digraphs with directed tree width at most $f(k_F)$.
\end{proof}

On the other hand, Bang-Jensen et al. \cite{BHM} proposed the following sort of counterpart.

\begin{conjecture}[Bang-Jensen et al. \cite{BHM}] \label{conj:nonplanar}
$F$-\pb\ is NP-complete for every non-planar digraph~$F$.
\end{conjecture}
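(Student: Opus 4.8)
The plan is to prove NP-hardness by reduction from the $2$-{\sc Linkage} problem, which asks whether a digraph contains vertex-disjoint directed paths $P_1$ from $s_1$ to $t_1$ and $P_2$ from $s_2$ to $t_2$, and which is known to be NP-complete. The guiding intuition is the exact counterpart of Corollary~\ref{cor:nobig}: a cylindrical grid is planar as an undirected graph, so it can never contain a subdivision of a non-planar $F$. Hence the ``large grid $\Rightarrow$ Yes'' certificate that drives tractability in the planar case is simply unavailable, and one expects the routing of the subdivision paths to be genuinely constrained. The real work is to turn this absence of planar slack into a gadget that forces two subdivision paths to cross.

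Concretely, I would first invoke Kuratowski's theorem on the underlying undirected multigraph of $F$: non-planarity yields a subdivision of $K_5$ or $K_{3,3}$, that is, a set of branch vertices joined by internally disjoint paths such that, in every attempted planar drawing, at least two of these paths must intersect. Fix such a crossing pair, say the paths through arcs $e_1$ and $e_2$ of $F$. I would then build a host digraph $H$ consisting of a \emph{rigid planar template} that realizes an $F$-subdivision everywhere except along $e_1$ and $e_2$, leaving four terminals $s_1,t_1,s_2,t_2$ exposed at the place where the two Kuratowski paths are forced to meet. Into this gap I would plug the $2$-{\sc Linkage} instance. By design, completing the two missing subdivision paths inside the gadget would require exactly a pair of vertex-disjoint $(s_1,t_1)$- and $(s_2,t_2)$-paths, so that $H$ contains an $F$-subdivision if and only if the linkage is realizable.

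Two properties must be verified for this equivalence. First, the template must be \emph{rigid}: every $F$-subdivision in $H$ is forced to place the branch vertices of $F$ on their intended template copies and to route all but the two designated paths within the template, so that the only remaining freedom is the routing of $P_1$ and $P_2$ through the gadget. Second, the template together with the gadget must contain no \emph{accidental} $F$-subdivision that bypasses the intended structure, in particular none when the linkage fails. Establishing rigidity and excluding accidental copies is where the genuine difficulty lies.

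The main obstacle is \emph{uniformity}. For a single four-vertex $F$ one can enumerate the finitely many ways its branch vertices could be placed and hand-tailor the template, which is precisely why the classification in this paper proceeds case by case rather than by one master reduction. Producing a single construction valid simultaneously for every non-planar $F$ is far harder: the arc orientations of $F$ are arbitrary, branch vertices may be \emph{big} in the sense of Theorem~\ref{NP-big} (so subdivision paths can be rerouted through them), and the symmetries of $F$ multiply the number of alternative embeddings that a correctness proof must rule out. I expect that the general conjecture would require a structural understanding of \emph{which} pair of paths is forced to cross, together with a way to anchor that crossing insensitively to orientations and symmetries; short of such a uniform mechanism, the honest expectation is that the conjecture must be attacked one family of $F$ at a time, with the crossing gadget above serving as the common engine.
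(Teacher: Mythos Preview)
The statement you are trying to prove is labelled \emph{Conjecture} in the paper, and indeed the paper contains no proof of it: it is quoted from \cite{BHM} as an open problem motivating the present classification. So there is no ``paper's own proof'' to compare your proposal against.

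As for the proposal itself, it is not a proof but an outline with gaps that you yourself flag. Two of these gaps are genuine obstructions, not mere details to be filled in. First, the appeal to Kuratowski's theorem gives you an undirected $K_5$- or $K_{3,3}$-subdivision in $UG(F)$, but there is no canonical ``crossing pair'' of paths: which two paths cross depends on the attempted embedding, and different embeddings force different pairs. You would need to argue that \emph{some} pair of arcs $e_1,e_2$ of $F$ has the property that every $F$-subdivision in your host $H$ routes the $e_1$- and $e_2$-paths through the gadget, and nothing in the sketch supplies this. Second, and more seriously, the ``rigidity'' of the template is asserted rather than established. In the paper's NP-completeness proofs (Proposition~\ref{propNP}) rigidity is obtained by exploiting very specific degree constraints of the particular four-vertex digraph at hand; each case requires its own argument, and for several four-vertex digraphs ($O_1,\dots,O_5$) the authors explicitly note that no such argument is known. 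A uniform rigidity mechanism valid for all non-planar $F$ would be a substantial new idea, and your last paragraph correctly identifies this as the crux. Until that is supplied, the proposal remains a plausible strategy rather than a proof.
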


Bang-Jensen et al.~\cite{BHM} were able to classify all digraphs of order at most $3$: they are all tractable except the complete symmetric digraph on three vertices, which is intractable. In this paper, we consider digraphs of order $4$.
We classify all digraphs of order $4$ except for five of them (up to directional duality).
These are the digraphs $O_i$ for $1\leq i\leq 5$ depicted Figure~\ref{fig:open}.

\begin{figure}[htb]
       \centering
       \includegraphics[height=1.8cm]{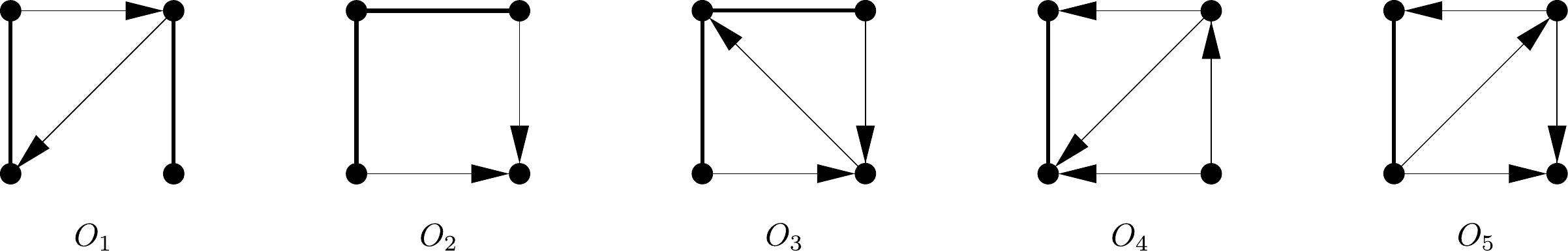}
       \caption{Digraphs on $4$-vertices that are not known to be tractable or intractable. Bold undirected edges represent directed $2$-cycles.}
       \label{fig:open}
\end{figure}

\begin{theorem}\label{thm:main}
Let $F$ be a digraph of order $4$ that is not isomorphic to any of $O_j$ for $1\leq j\leq 5$.\\
 If  $F$ contains a directed 2-cycle whose vertices are big  or $F$ is one of the graphs $N_i$  depicted in Figure~\ref{fig:NP} for $1\leq i\leq 9$, then {\sc $F$-Subdivision} NP-complete.
 Otherwise {\sc $F$-Subdivision} is polynomial-time solvable.
\end{theorem}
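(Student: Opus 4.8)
The plan is to establish the dichotomy by an exhaustive analysis of all digraphs of order $4$, organized according to the structural split suggested by the statement. First I would fix a complete list of the digraphs on four vertices, up to isomorphism and directional duality, and partition this list into three groups: those containing a directed $2$-cycle both of whose endpoints are big; the nine sporadic digraphs $N_1,\dots,N_9$; and all remaining digraphs (setting aside the five exceptional graphs $O_1,\dots,O_5$). The theorem then reduces to proving NP-completeness for the first two groups and polynomial-time solvability for the third.

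For the hardness direction, the first step is a single general lemma: if $F$ contains a directed $2$-cycle on two vertices $u,v$ that are both big, then $F$-\pb\ is NP-complete. I would prove this by reduction from an appropriate version of the $2$-linkage problem, exploiting the fact that the two big vertices supply enough out- and in-branches to route the two required internally disjoint directed paths while the $2$-cycle glues the ends of the gadget together; this is the same style of argument underlying Theorem~\ref{NP-big}, and I expect it to dispose of a large family of four-vertex digraphs at once. The nine digraphs $N_1,\dots,N_9$ lie outside this pattern and so must be treated individually: for each I would design a tailored reduction from a $2$-linkage variant, verifying in each case that the forced structure of an $F$-subdivision in the host digraph encodes exactly the two disjoint directed paths sought in the linkage instance.

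For the tractability direction, I would first dispose of every $F$ with no big vertices. Since the underlying multigraph of any digraph on four vertices is planar, such an $F$ is a planar digraph with no big vertices and is therefore tractable by Corollary~\ref{cor:nobig}. The remaining tractable digraphs all possess at least one big vertex yet avoid both the big--big $2$-cycle condition and the list $N_1,\dots,N_9$; these I would handle case by case, in each case exhibiting a direct polynomial-time algorithm that guesses the constantly many branch vertices and their incident subdivided arcs and then reduces the remaining routing to a bounded number of disjoint-paths or flow computations.

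The hard part will be this last group of tractable cases. Here the presence of big vertices puts us outside the grid/treewidth machinery behind Corollary~\ref{cor:nobig}, and the naive ``guess the branch vertices, then find disjoint paths'' strategy fails because the disjoint-paths subproblems that arise can themselves be as hard as $2$-linkage. The crux is therefore to exploit the specific arc structure of each such $F$ to show that the residual routing problem is in fact polynomially solvable---for instance by reducing it to a flow problem, to a matching, or to a disjoint-paths instance whose demands are nested or otherwise benign---and these are precisely the ``relatively complicated algorithms'' promised in the abstract. Checking that the enumeration is complete, that no digraph has been misclassified, and that each exceptional $O_i$ genuinely resists both the hardness lemma and all the tractability arguments constitutes the remaining bookkeeping.
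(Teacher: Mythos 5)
Your overall architecture matches the paper's: NP-hardness comes from a general lemma about directed $2$-cycles on two big vertices (this is Corollary~\ref{NP-2cycle} in the paper, itself a consequence of Theorem~\ref{NP-big} of Bang-Jensen et al.) together with nine individual reductions from {\sc Restricted $2$-Linkage} for $N_1,\dots,N_9$ (Proposition~\ref{propNP}); tractability for digraphs without big vertices comes from planarity of $4$-vertex digraphs and Corollary~\ref{cor:nobig}; and the remaining tractable digraphs are handled case by case (the paper organizes this enumeration by the number of edges of the $2$-cycle graph $G_F$, which is a convenient bookkeeping device but not a conceptual difference from your three-group split).

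However, there is a genuine gap: for the tractable digraphs that \emph{do} have a big vertex, your proposal only promises that ``the residual routing problem is in fact polynomially solvable --- for instance by reducing it to a flow problem, to a matching, or to a disjoint-paths instance whose demands are nested or otherwise benign,'' and this is precisely the part of the theorem that cannot be waved through. You correctly diagnose that guessing branch vertices and running Menger computations is insufficient, but you do not supply what replaces it. The paper's actual content here is substantial and not of the flow/matching type you suggest: for the $3$-wheel $W_3$ it needs the notion of an $X$-tripod and a recursive algorithm (Lemma~\ref{lem:tripod}) that, when Menger returns a $2$-separation $(W,S,Z)$, performs surgery on the digraph (three different modified digraphs $D_1$, $D_2$, $D_3$ depending on the structure of the separator and of arcs crossing it) and recurses, with a correctness claim for each modification; for $E_9$ it needs the shunt problem, whose polynomial solvability rests on Lemma~\ref{lem:shunt-equiv}, a delicate characterization in terms of arc bypasses, loose and tight crossings, and $C$-forward/backward paths, proved by a minimality/exchange argument; for $Z_4$ it needs the exchange argument of Lemma~\ref{equiv-Z}; and several cases ($F$ obtained by gluing spiders, superstars, bispindles) need \emph{prescribed-vertex} versions of subdivision results (Theorem~\ref{superstar}, Theorem~\ref{bispindle}, Lemma~\ref{W2}) so that the gluing lemmas (Lemmas~\ref{lem:union-spider} and~\ref{lem:add-spider}) apply. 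None of these algorithms is a routine reduction to a benign disjoint-paths instance, and without them your plan establishes only the hardness half and the no-big-vertex part of the tractability half of the theorem.
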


\begin{figure}[htb]
       \centering
       \includegraphics[height=5.5cm]{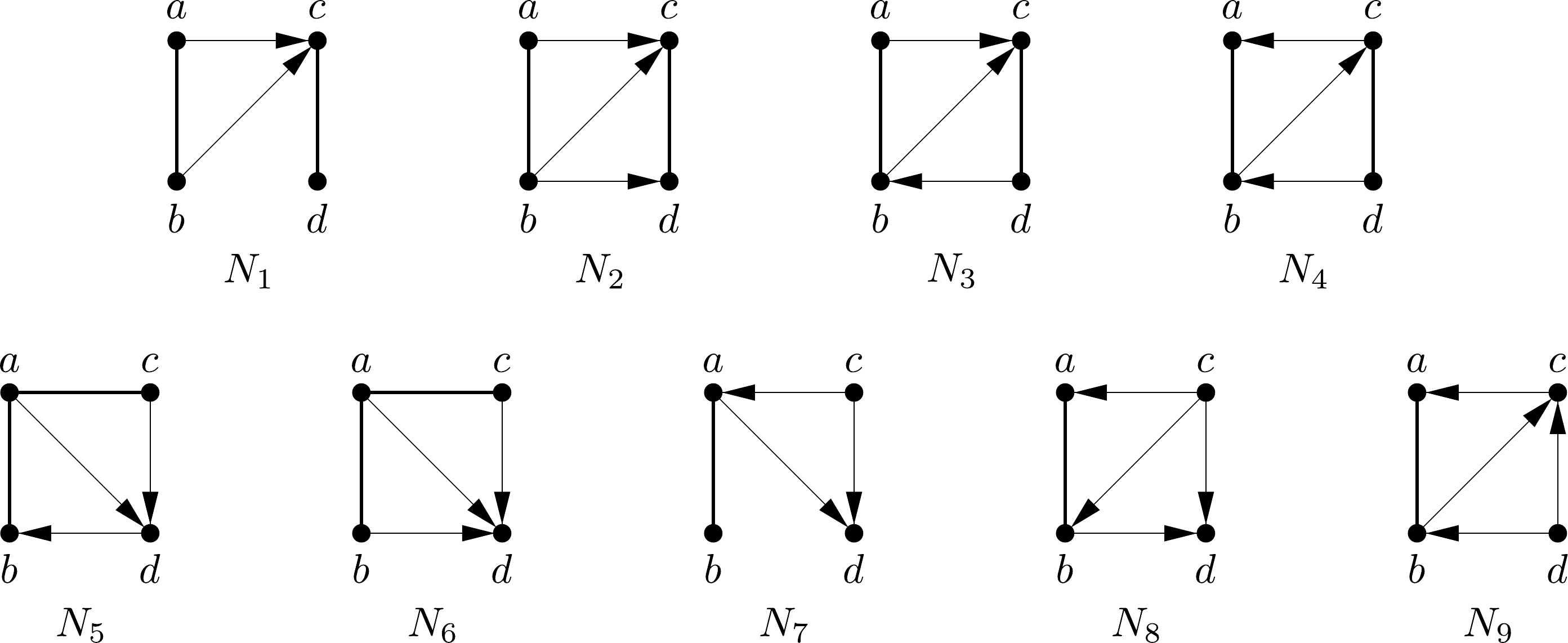}
       \caption{Some intractable digraphs on $4$-vertices. Bold undirected edges represent directed $2$-cycles.}
       \label{fig:NP}
\end{figure}

Theorem~\ref{thm:main} implies that all oriented graphs of order $4$ are tractable.
In particular, the wheel $W_3$ is tractable. The {\it wheel} $W_k$ is the graph obtained from the directed cycle on $k$ vertices $\vec{C}_k$ by adding a vertex, called the {\it centre}, dominating every
vertex of $\vec{C}_k$.
In \cite{BHM}, Bang-Jensen et al. proved that $W_2$ is tractable and that
$W_k$-\pb\ is NP-complete for all $k\geq 4$.
The case of $W_3$ was left as an open problem.

\begin{figure}[htb]
       \centering
       \includegraphics[height=1.6cm]{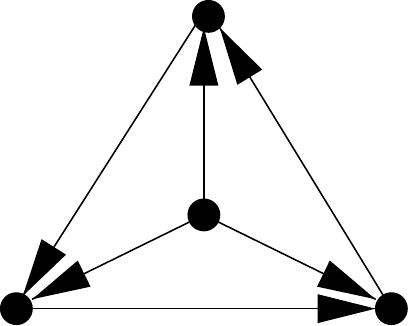}
       \caption{The $3$-wheel $W_3$.}
       \label{fig:W3C4}
\end{figure}

Theorem~\ref{thm:main} also completes the classification of tournaments. Bang-Jensen et al.~\cite{BHM} proved that every tournament of order at most $3$ is tractable, and that every tournament of order at least $5$ is intractable (see Theorem~\ref{NP-big}). They also show that the transitive tournament of order $4$ is tractable. The other tournaments of order four are $W_3$, its converse, and $ST_4$, the strong tournament of order $4$, no vertex of which is big.

\medskip

In Section~\ref{sec:hard} we prove some digraphs $F$ to be intractable. To do so, we use a reduction from the NP-complete problem {\sc Restricted $2$-Linkage}; given a digraph $D$ without big vertices in which $x_1$ and $x_2$ are sources and $y_1$ and $y_2$ are sinks, this problem consists in deciding whether there exists two vertex-disjoint directed paths $P_1$ from $x_1$ to $y_1$ and $P_2$ from $x_2$ to $y_2$.
The proofs all use the same technique. Given an instance $D, x_1, x_2, y_1,y_2$ of {\sc Restricted $2$-Linkage}, we construct a digraph $D'$ by putting $D$ on two arcs $e_1 = u_1v_1$ and $e_2 = u_2v_2$ of $F$, that is by taking the disjoint union of $D$ and $F$, removing the arcs $e_1$ and $e_2$ and adding the arcs $u_1x_1$, $y_1v_1$, $u_2x_2$ and $y_2v_2$ and show that $D'$ an $F$-subdivision if and only if there is a $2$-linkage from $(x_1, x_2)$ to $(y_1, y_2)$ in $D$. This implies that $F$-\pb\
 is NP-complete
 Unfortunately, this technique does not work for $O_j$, $1\leq j\leq 5$. For any pair of arcs $e_1$ and $e_2$ of $O_j$, the existence of an $F$-subdivision in the digraph $D'$ obtained by putting $D$ on those arcs does not imply the existence of a $2$-linkage from $(x_1, x_2)$ to $(y_1, y_2)$ in $D$.

\medskip
We then turn out to prove some digraphs of order $4$ to be tractable.
Since every digraph of order $4$ is planar, Corollary~\ref{cor:nobig} implies that digraphs of order $4$ with no big vertices are tractable.
Before the Directed Grid Theorem was proved, we found elementary proofs to show that  result. These proofs can be easily implemented and the derived algorithms are certainly of lower complexity than the ones derived from the Directed Grid Theorem. They can be found in \cite{HMM}.
Consequently, we only consider digraphs with at least one big vertex.

We present in Section~\ref{sec:known} some known results and tools, including Menger's Theorem, which we profusely use.
Then we scan the digraphs $F$ of vertices with respect to the number of edges in their $2$-cycle digraph $G_F$, which is the graph with vertex set $V(F)$, in which two vertices are linked by an edge if they are in a directed 2-cycle in $F$. Corollary~\ref{NP-2cycle} proved in Section~\ref{sec:hard}, implies that if $G_F$ has three edges, then $F$ is intractable. So we only need to examine digraphs $F$ for which $G_F$ has at most two edges.
We first consider the oriented graphs, (for which $G_F$ has no edge). We prove in in Section~\ref{sec:oriented} that all oriented graphs of order $4$ are tractable. The main result here is a polynomial-time algorithm solving $W_3$-\pb\ (Theorem~\ref{3wheel}).
Next we consider digraphs $F$ for which $G_F$ has one or two edges). We prove that the digraphs $E_i$,  $1\leq i\leq 9$,  depicted in Figure~\ref{fig:easy} are tractable.
\begin{figure}[htb]
       \centering
       \includegraphics[height=5.5cm]{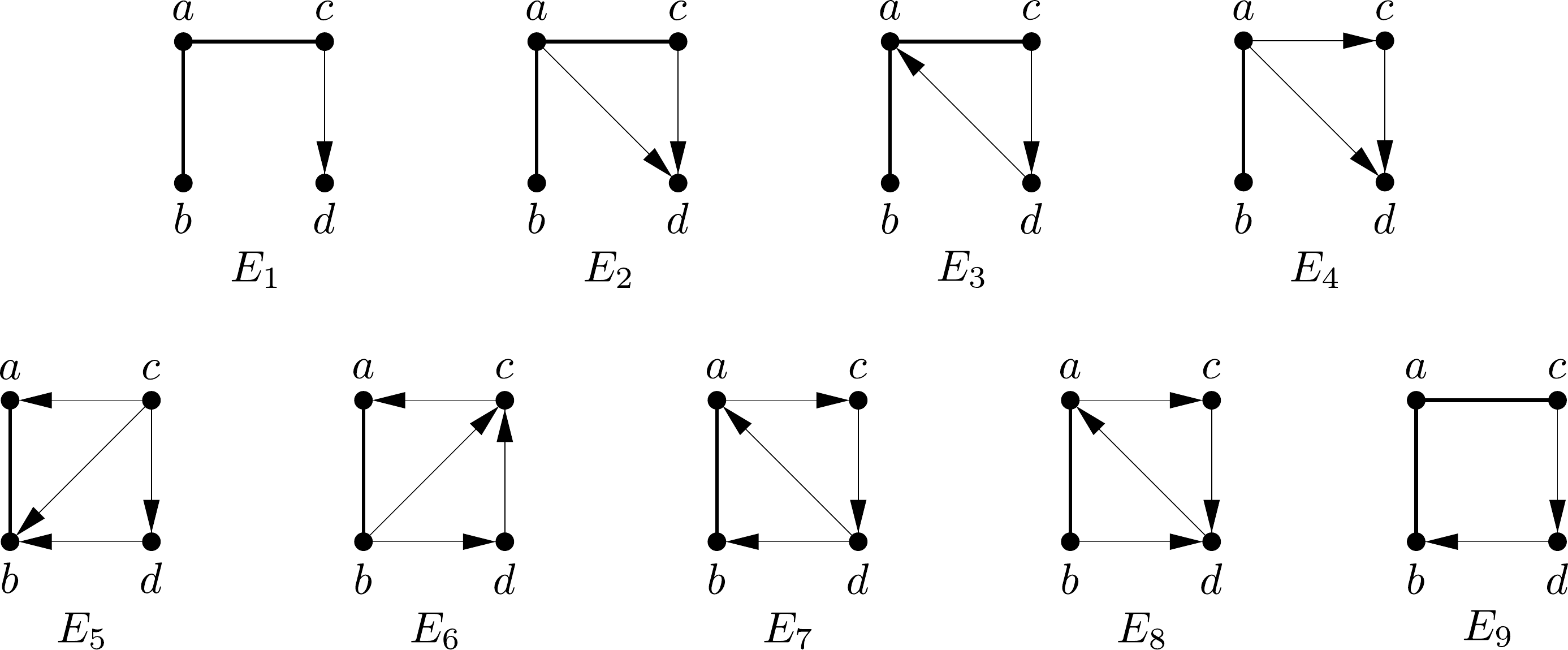}
       \caption{Some tractable digraphs on $4$-vertices. Bold undirected edges represent directed $2$-cycles.}
       \label{fig:easy}
\end{figure}
The polynomial-time algorithm to solve $E_i$-\pb\ is relatively easy for $1\leq i\leq 8$, but the one to solve $E_9$ is more involved.
Finally, in Section~\ref{sec:classify}, we summarize all results to prove Theorem~\ref{thm:main}.

\subsection{Finding an $F$-subdivision}

The letters $n$ and $m$ will always denote the number of vertices and arcs of the input digraph $D$ of the problem in question. By {\it linear time}, we mean $O(n+m)$ time.

\begin{lemma}\label{lem:find}
If $F$-\pb\ can be solved in $f(n,m)$  time,  where $f$ is non-decreasing in $m$, then
there is an algorithm that finds an $F$-subdivision (if one exists) in a digraph in $((m+1)\cdot f(n,m)+m)$ time.
\end{lemma}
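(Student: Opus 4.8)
The plan is to use the standard self-reduction that turns a decision oracle into a search procedure by greedy arc deletion. First I would run the assumed $f(n,m)$-time decision algorithm on $D$ itself. If it reports that $D$ contains no $F$-subdivision, the search returns the same answer and we are done after a single call. So assume henceforth that $D$ contains an $F$-subdivision.

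Next I would enumerate the arcs of $D$ as $e_1,\dots,e_m$ and maintain a working digraph $D'$, initialized to $D$, subject to the invariant that $D'$ always contains an $F$-subdivision. Processing the arcs one at a time, for each $e_i$ I would call the decision algorithm on $D'-e_i$: if the answer is `Yes', then $e_i$ is redundant and I delete it permanently by replacing $D'$ with $D'-e_i$; if the answer is `No', then $e_i$ is essential and I keep it in $D'$. In either case the invariant is preserved, and after all $m$ arcs have been processed $D'$ still contains an $F$-subdivision.

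The crux is to argue that the final digraph $D'$ is itself an $F$-subdivision, not merely a digraph containing one. I would first check that every surviving arc $e_i$ is \emph{essential} in the final $D'$, meaning $D' - e_i$ contains no $F$-subdivision. This is where a small subtlety arises: $e_i$ was kept because $D'_i - e_i$ had no $F$-subdivision, where $D'_i$ was the working digraph at step $i$; since later deletions only shrink the digraph, the final $D'$ satisfies $D'-e_i \subseteq D'_i - e_i$, and a subgraph of a digraph with no $F$-subdivision again has no $F$-subdivision, so $D'-e_i$ has none. Hence $D'$ is arc-minimal with respect to containing an $F$-subdivision. Now take any $F$-subdivision $S \subseteq D'$. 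If $S \neq D'$, then $D'$ has an arc $e$ not used by $S$, so $D'-e$ still contains $S$, contradicting the essentiality of $e$. Therefore $S = D'$, and $D'$ (after discarding any isolated vertices) is the $F$-subdivision we output. The monotonicity of $F$-subdivision containment under taking subgraphs is exactly what makes the greedy deletions safe, and verifying it is the only real obstacle.

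Finally I would bound the running time. There is one initial decision call on $D$, plus one call for each of the $m$ arcs, for $m+1$ calls in total. Every call is made on a subdigraph of $D$ with $n$ vertices and at most $m$ arcs, so since $f$ is non-decreasing in $m$, each call costs at most $f(n,m)$. The bookkeeping of removing arcs contributes $O(m)$ overall. This yields the claimed bound of $((m+1)\cdot f(n,m)+m)$ time.
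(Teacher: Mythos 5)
Your proposal is correct and follows essentially the same approach as the paper: a single initial decision call followed by greedy one-pass arc deletion, keeping an arc exactly when its removal destroys all $F$-subdivisions, with the same $(m+1)\cdot f(n,m)+m$ accounting. The only difference is presentational — you spell out the monotonicity/essentiality argument showing the final digraph \emph{is} an $F$-subdivision, which the paper compresses into the remark that every $F$-subdivision in $D_i$ must contain all surviving arcs among $a_1,\dots,a_i$.
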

\begin{proof}
Suppose that there exists an algorithm {\tt $F$-decide}$(D)$ that decides in $f(n,m)$ whether $D$ contains an $F$-subdivision.
We now construct an algorithm {\tt $F$-find}$(D)$ that finds an $F$-subdivision in $D$ if there is one, and returns `no' otherwise.
It proceeds as follows.

Let $a_1, \dots , a_m$ be the arcs of $D$.
If {\tt $F$-decide}$(D)$ returns `no', then we also return `no'.
If not, then $D$ contains an $F$-subdivision, we find it as follows:
We initialize $D_0:=D$.
For $i=1$ to $m$, $D_{i}:= D_{i-1}\setminus a_i$
if   {\tt $F$-decide}$(D_{i-1}\setminus a_i)$ returns `yes', and $D_{i}:= D_{i-1}$ otherwise.

{\tt $F$-find} is valid because at step $i$, we delete the arc $a_i$ if and only if there is an $F$-subdivision not containing $i$.
Hence at each step $i$, we are sure that $D_i$ contains an $F$-subdivision, and that any $F$-subdivision must contain all the arcs
of $A(D_i)\cap \{a_1, \dots ,a_i\}$.

{\tt $F$-find} runs $(m+1)$ times the algorithm {\tt $F$-decide} and removes at most $m$ times an arc.
Therefore, it runs in time $(m+1)\cdot f(n,m)+m$.
\end{proof}

Lemma~\ref{lem:find} implies that deciding if there is an $F$-subdivision in a digraph is polynomial-time solvable, if and only if, finding an $F$-subdivision in a digraph is polynomial-time solvable. Therefore,
since we are primarily interested in determining if the problems are polynomial-time solvable or NP-complete, and
for sake of clarity, we only present algorithms for solving $F$-\pb\ as a decision problem. However, the proofs of validity of all given algorithms always rely on constructive claims. Hence each algorithm can be easily transformed into a polynomial-time algorithm for finding an $F$-subdivision in a given digraph.
Moreover, the reader can check that the additional work does not increase the time complexity. Hence, our algorithms for finding $F$-subdivisions have the same complexity as their decision versions.

\section{Definitions and notations}

We rely on \cite{livre:digraph,BoMu08} for standard notation and concepts.
Let $D$ be a digraph. The {\it converse} of $D$ is the digraph $\overline{D}$ obtained from $D$ by reversing the orientation of all arcs.
We denote by $UG(D)$ the underlying (multi)graph of $D$, that is, the
(multi)graph we obtain by replacing each arc by an edge.
To every graph $G$, we can {\it associate} a symmetric digraph by replacing every edge $uv$ by the two arcs $uv$ and $vu$.

A {\it source} in $D$ is a vertex of indegree zero and a {\it sink} is a vertex of outdegree zero.

An {\it oriented graph} is an orientation of an undirected graph. In other words, it is a digraph with no directed cycles of length $2$.
An {\it oriented path} is an orientation of an undirected path.
Hence an oriented path $P$ is a sequence $(x_1, a_1, x_2, a_2, \dots , a_{n-1}, x_n)$, where the $x_i$ are distinct vertices and for all $1\leq j\leq n-1$, $a_j$ is either the arc $x_jx_{j+1}$ or the arc $x_{j+1}x_{j}$.
For sake of clarity, we often refer to such an oriented path $P$ by the underlying undirected path $x_1x_2 \dots x_n$.
This is a slight abuse, because the oriented path $P$ is not completely determined by this sequence as there are two possible orientations for each edge. However, when we use this notation, either the orientation does not matter or it is clear from the context.

Let $P=x_1x_2\cdots x_n$ be an oriented path.
We say that $P$ is an {\it $(x_1,x_n)$-path}.
The vertex $x_1$ is the {\it initial vertex} of $P$ and $x_n$ its {\it terminal vertex}.
We denote the initial vertex of $P$ by $s(P)$ and the terminal vertex of $P$ by $t(P)$. The subpath $x_2\cdots x_{n-1}$ is denoted by $P^{\circ}$.
If $x_1x_2$ is an arc, then $P$ is an {\it outpath}, otherwise $P$ is an {\it inpath}. The path $P$ is {\it directed} if no vertex is the tail of two arcs in $P$ nor the head of two arcs. In other words, all arcs are oriented in the same direction.
There are two kinds of directed paths, namely directed outpaths and directed inpaths.
For convenience, a directed outpath is called a {\it dipath}.
The {\it blocks} of an oriented path $P$ are the maximal directed subpaths of $P$. We often enumerate
them from the initial vertex to the terminal vertex of the path. The number of blocks of $P$ is denoted by $b(P)$. The {\it opposite path} of $P$, denoted $\inv{P}$, is the path $x_nx_{n-1}\cdots x_1$. For $1\leq i\leq j\leq n$, we denote by $P[x_i, x_j]$ (resp. $P]x_i, x_j[$, $P]x_i, x_j]$, $P[x_i, x_j[$), the oriented subpath $x_ix_{i+1} \dots x_j$ (resp. $x_{i+1}x_{i+2} \dots x_{j-1}$, $x_{i+1}x_{i+2} \dots x_j$, $x_ix_{i+1} \dots x_{j-1}$).

The above definitions and notation can also be used for oriented cycles. If $C=x_1x_2 \dots x_nx_1$ is an oriented cycle, we shall assume that either $C$ is a {\it directed cycle}, that is $x_ix_{i+1}$ is an arc for all $1\leq i\leq n$, where $x_{n+1}=x_1$, or both edges of $C$ incident with $x_1$ are directed outwards, i.e. $x_1x_2$ and $x_1x_n$ are arcs of $C$.

For a set $X$ of vertices, the {\it outsection} of $X$ in $D$, denoted by $S_D^+(X)$, is the set of vertices that are reachable from $X$ by a dipath. The outsection of a set in a digraph can be found in linear time using the Breadth-First Search.
The directional dual notion, the {\it insection} of $X$ in $D$ is denoted by $S_D^-(X)$.

The digraph $D$ is {\it connected} (resp.\ {\it $k$-connected}) if $UG(D)$ is a connected (resp.\ $k$-connected) graph.
It is {\it strongly connected}, or {\it strong}, if for any two
vertices $u,v$, there is a $(u,v)$-dipath in $D$.
If $D$ is strong, we use the notation $D[u,v]$ to denote any $(u,v)$-dipath in $D$.
The disjoint union of two digraphs $D_1$ and $D_2$ is denoted $D_1+D_2$.

By {\it contracting} a vertex-set $X\subseteq V(D)$ we refer to the operation of first taking the digraph $D-X$ and then adding new vertex $v_X$ and adding the arc
$v_Xw$ for each $w \in V(D-X)$ with an inneighbour in $X$ and the arc $uv_X$ for each $u \in V(D-X)$ with an outneighbour in $X$.
The {\it contraction} of a non-strong digraph $D$ is the digraph obtained by contracting all strong components of $D$.

\section{Intractable digraphs}\label{sec:hard}

Let $x_1, x_2, \dots , x_k, y_1, y_2, \dots , y_k$ be distinct vertices of a digraph $D$. A
{\it $k$-linkage} from $(x_1, x_2,\dots , x_k)$ to $(y_1, y_2, \dots , y_k)$ in $D$ is a system of disjoint
dipaths $P_1, P_2, \dots , P_k$ such that $P_i$ is an $(x_i, y_i)$-path in $D$.
Fortune, Hopcroft and Wyllie~\cite{FHW80} showed that for any $k\geq 2$, {\sc $k$-Linkage} is NP-complete.
The problem is also NP-complete when restricted to some classes of digraphs.
Recall that a vertex $v$ is {\it big} if either $d^+(v)\geq 3$, or $d^-(v)\geq 3$, or $d^-(v) = d^+(v)= 2$.

\bigskip

\noindent {\sc Restricted $2$-Linkage}\\
\underline{Input}: A digraph $D$ without big vertices in which $x_1$ and $x_2$ are sources and $y_1$ and $y_2$ are sinks. \\
\underline{Question}: Is there a $2$-linkage from $(x_1, x_2)$ to
$(y_1, y_2)$ in $D$?

\begin{theorem}[Bang-Jensen et al.~\cite{BHM}]\label{NP-small}
The {\sc Restricted $2$-Linkage} problem is NP-complete.
\end{theorem}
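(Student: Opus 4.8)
The plan is to prove NP-completeness by a polynomial reduction from the {\sc $2$-Linkage} problem, which is NP-complete by Fortune, Hopcroft and Wyllie~\cite{FHW80}. Membership in NP is immediate, since a pair of disjoint dipaths is a polynomial-size certificate that can be verified in linear time, so I would concentrate on hardness. Starting from an arbitrary instance $(D,x_1,x_2,y_1,y_2)$ of {\sc $2$-Linkage}, the goal is to build in polynomial time an instance $(D',s_1,s_2,t_1,t_2)$ of {\sc Restricted $2$-Linkage} — that is, with $s_1,s_2$ sources, $t_1,t_2$ sinks, and \emph{no} big vertex — such that $D'$ admits a $2$-linkage from $(s_1,s_2)$ to $(t_1,t_2)$ if and only if $D$ admits one from $(x_1,x_2)$ to $(y_1,y_2)$. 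Two issues must be dealt with: forcing the four terminals to be sources and sinks, and forcing every vertex to have small degree.

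The first issue is handled trivially: add four new vertices $s_1,s_2,t_1,t_2$ together with the arcs $s_ix_i$ and $y_it_i$, so that each $s_i$ is a source of out-degree $1$ and each $t_i$ a sink of in-degree $1$; plainly the $2$-linkages between the new terminals correspond exactly to those between the original ones. The second, and main, issue is to eliminate big vertices while preserving the disjoint-paths structure. I would do this with a \emph{vertex-splitting gadget}: a vertex $v$ with in-arcs $e_1,\dots,e_p$ and out-arcs $f_1,\dots,f_q$ is replaced by a binary in-branching $T^-_v$ with $p$ leaves, all of whose arcs are oriented towards a root that feeds a single copy of $v$, followed by a binary out-branching $T^+_v$ with $q$ leaves fed by $v$ and oriented away from its root; the in-arc $e_j$ (resp. out-arc $f_j$) is then attached to the corresponding leaf. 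A direct degree count shows every resulting vertex is non-big: each internal node of $T^-_v$ has in-degree $2$ and out-degree $1$, each internal node of $T^+_v$ has in-degree $1$ and out-degree $2$, the central copy of $v$ has in- and out-degree $1$, and every leaf has in- and out-degree $1$; in particular no vertex reaches out- or in-degree $3$, nor has both degrees equal to $2$. The construction adds only $O(p+q)$ vertices per original vertex, hence is linear in the size of $D$.

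The correctness of the gadget is where the argument needs care, and is the part I expect to be the main obstacle. The key claim is that the gadget simulates a single vertex with respect to \emph{vertex-disjoint} dipaths. Since $T^-_v$ is oriented entirely towards the central copy of $v$ and $T^+_v$ entirely away from it, any dipath entering the gadget through an in-leaf can leave it only by climbing to $v$ and descending to some out-leaf; as no terminal lies inside a gadget, a path using the gadget must traverse the unique copy of $v$. Because that copy has in- and out-degree $1$, at most one path of a linkage can use any given gadget, exactly as for a genuine vertex. This yields the desired equivalence: contracting each gadget back to its vertex turns a $2$-linkage of $D'$ into a $2$-linkage of the augmented digraph, and conversely, routing each vertex-traversal of a $2$-linkage of $D$ through the matching branches of the gadget produces disjoint dipaths in $D'$; disjointness is preserved in both directions because distinct paths use disjoint original vertices, hence internally disjoint gadgets. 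Making this rigorous — in particular verifying that a path cannot ``cheat'' by entering and leaving a gadget on the same side, which is ruled out by the uniform orientation of $T^-_v$ and $T^+_v$ — completes the reduction and shows that {\sc Restricted $2$-Linkage} is NP-complete.
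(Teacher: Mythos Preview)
The paper does not give its own proof of this theorem; it is simply quoted from \cite{BHM} and used as a black box for the NP-hardness reductions in Section~\ref{sec:hard}. So there is no in-paper argument to compare against.

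Your reduction is correct and is essentially the standard one. The vertex-splitting gadget does exactly what is needed: every dipath traversing the gadget for $v$ is forced through the unique central copy (since $T^-_v$ is a sink-tree and $T^+_v$ a source-tree, there is no way to turn around), so vertex-disjointness in $D'$ corresponds precisely to vertex-disjointness in $D$. The degree check is accurate --- internal nodes have degree pair $(2,1)$ or $(1,2)$, leaves and the central copy have $(1,1)$, and the attached terminals have $(0,1)$ or $(1,0)$ --- so no vertex of $D'$ is big. The only cosmetic point is that the separate ``central copy of $v$'' is redundant (one could identify the two roots), but this does no harm. Your equivalence argument, including the observation that a path cannot enter and exit on the same side of the gadget because of the uniform orientation of the branchings, is exactly the verification required; nothing is missing.
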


Using this theorem, Bang-Jensen et al.~\cite{BHM} deduced a sufficient condition for $F$-\pb\ to be NP-complete.

For a digraph $D$, we denote by $B(D)$ the set of its big vertices.
A {\it big path} in a digraph is a directed path whose endvertices are big and whose internal vertices all have both indegree and outdegree equal to $1$ in $D$ (in particular an arc between two big vertices is a big path). Note also that two big paths with the same endvertices are necessarily internally disjoint.

The {\it big paths digraph} of $D$, denoted $BP(D)$, is the multidigraph
with vertex set $B(D)$ in which there are as many arcs between two vertices $u$ and $v$
as there are big $(u,v)$-paths in $D$.

\begin{theorem}[Bang-Jensen et al.~\cite{BHM}]\label{NP-big}
Let $F$ be a digraph.
If $F$ contains two arcs $ab$ and $cd$ whose endvertices are big vertices and such that
$(BP(F)\setminus \{ab, cd\}) \cup \{ad, cb\}$ is not isomorphic to $BP(F)$, then $F$-\pb\ is NP-complete.
\end{theorem}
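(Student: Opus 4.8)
The plan is to reduce {\sc Restricted $2$-Linkage}, which is NP-complete by Theorem~\ref{NP-small}, to $F$-\pb\ by the general recipe announced in the introduction. Given an instance $(D,x_1,x_2,y_1,y_2)$, where $D$ has no big vertex, $x_1,x_2$ are sources and $y_1,y_2$ are sinks, I would build $D'$ by \emph{putting $D$ on the two arcs} $ab$ and $cd$: take the disjoint union of $D$ and $F$, delete $ab$ and $cd$, and add the arcs $ax_1$, $y_1b$, $cx_2$ and $y_2d$. Since $F$ is fixed this is a polynomial-time construction, so it suffices to prove that $D'$ contains an $F$-subdivision if and only if $D$ admits a $2$-linkage from $(x_1,x_2)$ to $(y_1,y_2)$. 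The forward implication is routine: given disjoint dipaths $P_1$ from $x_1$ to $y_1$ and $P_2$ from $x_2$ to $y_2$, map $F$ identically onto its copy, realise every arc other than $ab,cd$ by itself, and realise $ab$ by $a,x_1,P_1,y_1,b$ and $cd$ by $c,x_2,P_2,y_2,d$; as $P_1,P_2$ are internally disjoint and avoid $F$, this is an $F$-subdivision.

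The substance is the converse, and here the hypothesis on $BP(F)$ enters. First I would check that the surgery preserves big vertices: each of $a,b,c,d$ keeps its in- and out-degree, while every vertex of $D$ (including the former sources and sinks, which gain only a single in- or out-arc) stays non-big; hence $B(D')=B(F)$. In any $F$-subdivision of $D'$, a branch vertex that is the image of a big vertex of $F$ must itself carry the corresponding large degree and so lies in $B(D')=B(F)$; since $|B(F)|$ branch vertices inject into $B(F)$, this is a bijection $\phi$ from the big vertices of $F$ onto $B(F)$. Consequently the subdivision restricts to a subdivision of $BP(F)$ in which each big path of $F$ is realised by a directed path between big vertices whose interior vertices are non-big, and hence avoid all the branch vertices in $B(F)$, distinct big paths giving internally disjoint such paths.

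I would then catalogue the available \emph{reduced paths}, that is, directed paths with big endpoints and non-big interior, in $D'$. Because the only arcs joining $D$ to $F$ are $ax_1$, $cx_2$, $y_1b$ and $y_2d$, any reduced path meeting $D$ must start at $a$ or $c$ and end at $b$ or $d$; since the entrances $x_1,x_2$ and the exits $y_1,y_2$ can each serve at most once, at most two such paths are internally disjoint, and a disjoint pair forms either $\{ab,cd\}$ (from dipaths $x_1\rightsquigarrow y_1$, $x_2\rightsquigarrow y_2$ in $D$) or $\{ad,cb\}$ (from $x_1\rightsquigarrow y_2$, $x_2\rightsquigarrow y_1$). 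Every other reduced path lies inside $F$ with $ab,cd$ deleted. Realising the whole copy of $BP(F)$ therefore forces exactly two reduced paths through $D$, so the resulting subdigraph on $B(F)$ equals $BP(F)\setminus\{ab,cd\}$ together with either $\{ab,cd\}$ or $\{ad,cb\}$, and through $\phi$ it is isomorphic to $BP(F)$. The first option reproduces $BP(F)$ and exhibits precisely a $2$-linkage from $(x_1,x_2)$ to $(y_1,y_2)$ in $D$, as wanted; the second yields $(BP(F)\setminus\{ab,cd\})\cup\{ad,cb\}$, which is isomorphic to $BP(F)$ only if the hypothesis fails. Thus under the stated assumption the ``swap'' is impossible and we are always in the first case, completing the reduction.

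The step I expect to be the main obstacle is the rigidity underlying this converse: one must argue carefully that the big-vertex skeleton of any $F$-subdivision of $D'$ is genuinely a faithful copy of $BP(F)$, and that the only way to complete it through $D$ is by one of the two matchings, excluding spurious realisations of the missing big paths inside $F\setminus\{ab,cd\}$ and mixed routings that enter and leave $D$ irregularly. It is exactly to forbid the swap completion that the non-isomorphism of $(BP(F)\setminus\{ab,cd\})\cup\{ad,cb\}$ with $BP(F)$ is invoked, so the calibration of this hypothesis is what makes the reduction faithful.
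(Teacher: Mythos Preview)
The paper does not prove this theorem; it is quoted from \cite{BHM} without proof. Your reduction from {\sc Restricted $2$-Linkage} by putting $D$ on the arcs $ab$ and $cd$ is exactly the construction announced in the introduction and used in \cite{BHM}, and the easy direction together with the identification $B(D')=B(F)$ are fine.

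There is a real gap in the converse, and it is slightly upstream of where you locate it. You assert that ``realising the whole copy of $BP(F)$ forces exactly two reduced paths through $D$, so the resulting subdigraph on $B(F)$ equals $BP(F)\setminus\{ab,cd\}$ together with either $\{ab,cd\}$ or $\{ad,cb\}$'', but neither clause is justified. The clean way to get ``exactly two'' is a degree argument you have not made: for every $v\in B(F)$ one has $d^{\pm}_S(v)=d^{\pm}_F(\phi^{-1}(v))\le d^{\pm}_{D'}(v)=d^{\pm}_F(v)$; summing over $B(F)$ and using that $\phi$ is a bijection forces equality everywhere, so every big vertex of $D'$ uses \emph{all} its incident arcs in $S$. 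In particular $ax_1,cx_2,y_1b,y_2d\in A(S)$, which immediately gives exactly two internally disjoint through-$D$ realisations, one from $a$ and one from $c$, ending in $\{b,d\}$. The harder point is the second clause: the endpoint multidigraph $M$ you obtain is $\phi(BP(F))$, not $BP(F)$, and the inside-$F$ part is $M$ minus two arcs, not a priori $BP(F)\setminus\{ab,cd\}$. To invoke the hypothesis you must show that the inside-$F$ realisations, as an arc-disjoint family of reduced paths in $F\setminus\{ab,cd\}$ that saturate all arcs at big vertices (by the degree equality above), necessarily yield the multidigraph $BP(F)\setminus\{ab,cd\}$ on $B(F)$; this is where the bookkeeping in \cite{BHM} lies, and your sketch does not supply it.
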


\begin{corollary}\label{NP-2cycle}
Let $F$ be a digraph.
If $F$ contains a directed cycle of length $2$ whose vertices are big, then
$F$-\pb\ is NP-complete.
\end{corollary}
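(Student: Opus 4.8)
The plan is to deduce the corollary directly from Theorem~\ref{NP-big} by a single, well-chosen application of its arc-switching criterion. Let $u$ and $v$ be the two big vertices that form the directed $2$-cycle in $F$, so that both $uv$ and $vu$ are arcs of $F$. Since an arc joining two big vertices is itself a big path, both $uv$ and $vu$ are arcs of the big paths digraph $BP(F)$. I would therefore instantiate Theorem~\ref{NP-big} with the pair of arcs $ab := uv$ and $cd := vu$; that is, $a=u$, $b=v$, $c=v$, $d=u$, and all four endvertices $u,v$ are big as required.

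The key observation is that performing the switch on this particular pair creates loops. Indeed, $ad = uu$ and $cb = vv$, so the switched multidigraph $(BP(F)\setminus\{uv,vu\})\cup\{uu,vv\}$ contains a loop at $u$ and a loop at $v$. To conclude via Theorem~\ref{NP-big}, it then suffices to check that this switched multidigraph is not isomorphic to $BP(F)$. For this I would invoke the fact that $BP(F)$ is loopless: a big path is a directed path, hence has two distinct endvertices, so no big path joins a vertex to itself. Consequently $BP(F)$ has no loops, while the switched multidigraph has at least two; since the number of loops is an isomorphism invariant of multidigraphs, the two cannot be isomorphic. Theorem~\ref{NP-big} then immediately gives that $F$-\pb\ is NP-complete.

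There is essentially no deep obstacle here, as the result is a one-line specialization of Theorem~\ref{NP-big}. The only point that genuinely needs care is the verification that $BP(F)$ is loopless, because it is exactly the absence of preexisting loops that guarantees the newly produced loops $uu$ and $vv$ break the isomorphism; this in turn rests on reading off from the definition that a big path is a directed path between two \emph{distinct} big vertices.
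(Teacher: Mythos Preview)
Your argument is correct and is precisely the intended one-line derivation: the paper states Corollary~\ref{NP-2cycle} without proof, as an immediate consequence of Theorem~\ref{NP-big}, and your choice $ab=uv$, $cd=vu$ together with the observation that $BP(F)$ is loopless is exactly how one fills in the details. There is nothing to add.
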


So far, all known intractable digraphs were proved intractable by a reduction from {\sc Restricted $2$-Linkage}.
This paper is no exception:  we now show that some digraphs are NP-hard with such reductions.

\begin{proposition}\label{propNP}
For each digraph $N_i$, $1\leq i\leq 9$, depicted in Figure~\ref{fig:NP}, $N_i$-\pb\ is NP-complete.
\end{proposition}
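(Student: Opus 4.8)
The plan is to prove that each $N_i$-\pb\ is NP-complete by exhibiting, for each of the nine digraphs, a suitable choice of two arcs on which to ``hang'' an instance of {\sc Restricted $2$-Linkage}, following exactly the reduction template described in the introduction. Concretely, given an instance $(D, x_1, x_2, y_1, y_2)$ of {\sc Restricted $2$-Linkage}, we pick two arcs $e_1 = u_1 v_1$ and $e_2 = u_2 v_2$ of $N_i$, form $D'$ by taking the disjoint union of $D$ and $N_i$, deleting $e_1, e_2$, and adding the arcs $u_1 x_1$, $y_1 v_1$, $u_2 x_2$, $y_2 v_2$. Since $F$-\pb\ is trivially in NP (guess the subdivision and verify in polynomial time), the whole content is the correctness of the reduction: $D'$ contains an $N_i$-subdivision if and only if $D$ admits a $2$-linkage from $(x_1, x_2)$ to $(y_1, y_2)$.

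First I would establish the \emph{easy direction} uniformly for all nine cases: if $D$ has a $2$-linkage $P_1, P_2$, then routing the two subdivided arcs $e_1$ and $e_2$ through $u_1 x_1 P_1 y_1 v_1$ and $u_2 x_2 P_2 y_2 v_2$ respectively, while keeping every other arc of $N_i$ as itself, yields a subdivision of $N_i$ inside $D'$; disjointness of $P_1, P_2$ in $D$ guarantees the paths are internally disjoint. The substantive work is the \emph{converse direction}, which must be verified separately for each $N_i$ and each chosen pair of arcs. Here the key structural point is that $x_1, x_2$ are sources and $y_1, y_2$ are sinks in $D$, so the only arcs of $D'$ entering the copy of $D$ are $u_1 x_1, u_2 x_2$, and the only arcs leaving it are $y_1 v_1, y_2 v_2$. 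Consequently, any directed path of an $N_i$-subdivision that uses an internal vertex of $D$ must enter through some $u_j x_j$ and leave through some $y_\ell v_\ell$, so the portion inside $D$ is a genuine $(x_j, y_\ell)$-dipath.

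The crux of the argument, and the reason the reduction works for $N_i$ but provably fails for the $O_j$, is a \emph{routing-rigidity} claim: the two arcs $e_1, e_2$ must be chosen so that in \emph{every} $N_i$-subdivision the two subdivided arcs corresponding to $e_1$ and $e_2$ are forced to pass through $D$ ``in parallel,'' i.e.\ entering via $u_1 x_1$ and exiting via $y_1 v_1$ for one path and $u_2 x_2, y_2 v_2$ for the other, ruling out the ``crossed'' routing $u_1 x_1 \rightsquigarrow y_2 v_2$ and $u_2 x_2 \rightsquigarrow y_1 v_1$. This is exactly the condition isolated in Theorem~\ref{NP-big}: the crossed routing would realize the swapped big-paths digraph $(BP(N_i)\setminus\{e_1,e_2\})\cup\{u_1 v_2, u_2 v_1\}$, and we must verify this is not isomorphic to $BP(N_i)$, so that a crossed linkage cannot complete to an $N_i$-subdivision. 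I expect the main obstacle to be precisely this case analysis: for each of the nine digraphs one must identify a pair of arcs whose endpoints are big and whose ``swap'' changes the isomorphism type of $BP(N_i)$, and then argue that no subdivision can use vertices of $D$ other than along these two paths (in particular that $D$ cannot be used to subdivide a \emph{third} arc of $N_i$, which is where $N_i$'s having few arcs between big vertices, together with $x_j$ being sources and $y_\ell$ being sinks, becomes essential). In the cleanest cases this is a direct invocation of Theorem~\ref{NP-big}; where a digraph $N_i$ has extra structure not captured by $BP$, a short dedicated argument tracing the possible images of each vertex of $N_i$ in the subdivision will be needed.
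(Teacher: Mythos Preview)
Your reduction framework and the easy direction are correct, and you have correctly identified that the converse needs a case-by-case analysis. However, your plan to lean on Theorem~\ref{NP-big} is misplaced and constitutes a genuine gap.

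Theorem~\ref{NP-big} requires two arcs $e_1,e_2$ all four of whose endpoints are \emph{big}. But the $N_i$ are precisely the intractable $4$-vertex digraphs that are \emph{not} already handled by Corollary~\ref{NP-2cycle} (see the statement of Theorem~\ref{thm:main}); in particular none of them has a $2$-cycle on two big vertices, and several of them ($N_1$, $N_5$, $N_7$, $N_9$) have only a \emph{single} big vertex. For those digraphs there is no pair of arcs between big vertices at all, so the $BP$-swap criterion you invoke is simply unavailable. Your ``crossed routing'' picture therefore does not capture the actual obstruction.

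What the paper does (and what your ``fallback'' sentence alludes to but underestimates) is the entire content of the converse. One uses that $D$ has no big vertices to conclude that every big vertex of the putative $N_i$-subdivision $S$ must lie among the four original $N_i$-vertices in $D'$; this pins down one or two of $a',b',c',d'$ immediately. Then one argues, by counting in- and out-degrees in $D'$ at those vertices, that certain arcs of $N_i$ incident with them must lie in $S$; this forces further identifications $a'=a$, $b'=b$, etc., sometimes after a symmetry argument. Only once all four branch vertices have been located at $a,b,c,d$ does it follow that the subdivision paths replacing $e_1$ and $e_2$ go through $D$ with the correct endpoints, yielding the $2$-linkage. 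This degree-tracing is not a ``short dedicated argument'' tacked on at the end; it is the whole proof, and it must be carried out separately (with genuinely different details) for each $i$. Your plan should be reorganised around this vertex-pinning argument rather than around Theorem~\ref{NP-big}.
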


\begin{proof}
In each case, the problem is proved to be NP-complete by reduction from {\sc Restricted $2$-Linkage}.
Let $D$, $x_1$, $x_2$, $y_1$ and $y_2$ be an instance of this problem.
We construct a digraph $D_i$ by {\it putting} $D$ on two arcs $e_1=u_1v_1$ and $e_2=u_2v_2$ of $N_i$ (that will be specified later), that is by taking the disjoint union of $D$ and $N_i$, removing the
arcs $e_1$ and $e_2$ and adding the arcs $u_1x_1$, $y_1v_1$, $u_2x_2$ and $y_2v_2$.
We then show that $D_i$ contains an $N_i$-subdivision if and only if there is a $2$-linkage from $(x_1, x_2)$ to $(y_1, y_2)$ in $D$.
This implies that $N_i$-\pb\ is NP-complete.

Clearly, by construction of $D_i$, if there is a $2$-linkage from $(x_1, x_2)$ to $(y_1, y_2)$ in $D$, then $D_i$ contains an $N_i$-subdivision.
We now prove the converse for each $i$. In each case we shall assume that $D_i$ contains an $N_i$-subdivision $S$, and we shall denote by $a',b',c',d'$ the vertices in $S$ corresponding to
$a,b,c,d$, respectively.

\smallskip\noindent $i=1$:~
We choose $e_1=ab$ and $e_2=cd$.
Since $D$ contains no big vertices, we have $c'=c$.
Because $d^-_{D_1}(c)=3$, the arcs $ac$, $bc$ and $dc$ are in $S$. Moreover, the arc $ba$ is in $S$, because every vertex has indegree at least $1$ in $S$.
Thus $d^+_S(b)\geq 2$, and so either $b=b'$ or $b=a'$. By symmetry between $a$ and $b$ in $N_1$, we may assume that $b=b'$. Then, necessarily, $a=a'$.
Therefore, in $S$, there are disjoint $(a,b)$- and $(c,d)$-dipaths.
These two paths induce a $2$-linkage from $(x_1, x_2)$ to $(y_1, y_2)$ in $D$.

\smallskip\noindent $i\in\{2, 3, 4\}$:~
We choose $e_1=ab$ and $e_2=cd$.
Since $D$ contains no big vertices, we have $\{b,c\} = \{b',c'\}$. Therefore,
the arc $bc$ is contained in $S$, and this shows that $b'=b$ and $c'=c$.
Now for degree reasons, all arcs incident to $b$ and $c$ must be in $S$.
It follows that $a'=a$ and $d'=d$. (This is clear for $N_3$ and $N_4$. For $N_2$, we first conclude that $\{a',d'\}=\{a,d\}$ and then consider degrees of $a$ and $d$ to obtain the same conclusion.)
Therefore, in $S$, there are disjoint $(a,b)$- and $(c,d)$-dipaths.
These two paths induce a $2$-linkage from $(x_1, x_2)$ to $(y_1, y_2)$ in $D$.

\smallskip\noindent $i=5$:~
We choose $e_1=ba$ and $e_2=cd$.
Since $D$ contains no big vertices, we have $a'=a$.
Hence all the arcs incident to $a$ ($ac, ca, ad, ab, y_1a$) are in $A(S)$. Therefore, since $aca$ is a $2$-cycle and it is in $S$, $c$ is either $b'$ or $c'$.
But $d^-(c)=1$, so $c$ cannot be $b'$ since $b'$ must have indegree at least $2$, and thus $c=c'$.
All vertices have outdegree at least $1$ in $S$, so $db \in A(S)$ since we know that the arc $ad$ and therefore $d$ is in $S$.
Now there are two internally disjoint $(a',b)$-dipaths in $S-c'$, then necessarily $b=b'$, since there should be two disjoint paths from $a'$ to $b'$ in $S-c'$ and they should use the arcs $ad$ and $ab$ that we have already conclude they must belong to $S$. Moreover, $d'$ must be in one of those dipaths, so $d=d'$.
Therefore, in $S$, there are internally disjoint $(b,a)$- and $(c,d)$-dipaths.
These two paths induce a $2$-linkage from $(x_1, x_2)$ to $(y_1, y_2)$ in $D$.

\smallskip\noindent $i=6$:~
We choose $e_1=ab$ and $e_2=cd$.
Since $D$ contains no big vertices, we have $a'=a$ and $d'=d$.
Hence all arcs incident to those two vertices are in $S$. Therefore $\{b',c'\}=\{b,c\}$. By symmetry of $N_6$, we may assume that $b'=b$ and $c'=c$.
Therefore, in $S$, there are disjoint $(a,b)$- and $(c,d)$-dipaths.
These two paths induce a $2$-linkage from $(x_1, x_2)$ to $(y_1, y_2)$ in $D$.

\smallskip\noindent $i=7$:~
We choose $e_1=ab$ and $e_2=cd$.
Since $D$ contains no big vertices, we have $a'=a$.
Hence all arcs incident to $a$ are in $S$.
So $c$ and $d$ are in $V(S)$.
Since $d^+_{D_7}(d)=0$, we have $d=d'$;
since $d^-_{D_7}(c)=0$, we have $c=c'$.
Therefore, in $S$, there are disjoint $(a,b)$- and $(c,d)$-dipaths.
These two paths induce a $2$-linkage from $(x_1, x_2)$ to $(y_1, y_2)$ in $D$.

\smallskip\noindent $i=8$:~
We choose $e_1=ab$ and $e_2=cd$.
Since $D$ contains no big vertices, we have $b'=b$ and $c'=c$.
Hence all arcs incident to those two vertices are in $S$.
So $d\in V(S)$. Since $d^+_{D_8}(d)=0$, it follows that $d=d'$.
The arcs $ba$ and $ca$ show that $d^-_S(a)\ge2$. Thus $a=a'$.
Therefore, in $S$, there are disjoint $(a,b)$- and $(c,d)$-dipaths.
These two paths induce a $2$-linkage from $(x_1, x_2)$ to $(y_1, y_2)$ in $D$.

\smallskip\noindent $i=9$:~
We choose $e_1=ab$ and $e_2=dc$.
Since $D$ contains no big vertices, we have $b'=b$.
Hence all arcs incident to $b$ are in $S$.
In particular $c,d\in V(S)$. Since $d^-_{D_{9}}(d)=0$, we have $d'=d$.
Since $d^+_S(c)\geq 1$, the arc $ca$ is in $A(S)$, so $d^-_S(a)=2$, and thus $a\in \{a',c'\}$.
Since $a'$ and $c'$ are both in the outsection of $d$ in $N_{9}-b$, $S$ contains a $(d,a)$-dipath
disjoint from $b$. This dipath must pass through $c$ and therefore the arc $y_2c$ lies in $S$.
This implies that $d^-_S(c)\ge2$, so $c=c'$ and then we have $a=a'$.
Consequently, in $S$, there are disjoint $(a,b)$- and $(d,c)$-dipaths.
These two paths induce a $2$-linkage from $(x_1, x_2)$ to $(y_1, y_2)$ in $D$.
\end{proof}

\section{Known results and tools for $F$-\pb}\label{sec:known}

\subsection{Menger's Theorem}

Let $X$ and $Y$ be two sets of vertices in a digraph $D$.
An {\it $(X,Y)$-dipath} is a dipath with initial vertex in $X$, terminal vertex in $Y$ and all
internal vertices in $V(D)\setminus (X\cup Y)$.
For notational clarity, for a vertex $x$ (resp. a subdigraph $S$ of $D$), we abbreviate $\{x\}$ to $x$ (resp. $V(S)$ to $S$) in the notation. For example, an $(x,S)$-dipath is an $(\{x\}, V(S))$-dipath.

Let $D$ be a digraph, and let $x$ and $y$ be distinct vertices of $D$.
Two $(x,y)$-paths $P$ and $Q$ are {\it internally disjoint} if they have no internal vertices in common, that is, if $V(P) \cap V(Q)= \{x,y\}$.
A {\it $k$-separation} of $(x,y)$ in $D$ is a partition $(W,S,Z)$ of its vertex set such that $x\in W$, $y\in Z$, $|S| \leq k$, each vertex in $W$ can be reached from $x$ by a dipath in $D[W]$, and there is no arc from $W$ to $Z$.

One version of the celebrated Menger's Theorem is the following.

\begin{theorem}[Menger]\label{Menger}
Let $k$ be a positive integer, let $D$ be a digraph, and let $x$ and $y$ be distinct vertices in $D$ such that $xy\notin A(D)$.
Then, in $D$, either there are $k+1$ pairwise internally disjoint $(x,y)$-dipaths, or there is a $k$-separation of $(x,y)$.
\end{theorem}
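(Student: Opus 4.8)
The two alternatives are mutually exclusive, so it suffices to show that at least one of them holds. Indeed, in any $k$-separation $(W,S,Z)$ every $(x,y)$-dipath must meet $S$ (it starts in $W$, ends in $Z$, and no arc leaves $W$ for $Z$), and pairwise internally disjoint dipaths meet $S$ in pairwise distinct vertices, so $k+1$ of them would force $|S|\ge k+1$. The plan is therefore to derive the statement from the min--max equality
\[
\mu(x,y)=\tau(x,y),
\]
where $\mu(x,y)$ is the maximum number of pairwise internally disjoint $(x,y)$-dipaths and $\tau(x,y)$ is the minimum size of an \emph{$(x,y)$-separator}, that is, a set $S\subseteq V(D)\setminus\{x,y\}$ such that $D-S$ contains no $(x,y)$-dipath (such a set exists since $xy\notin A(D)$; e.g.\ $S=V(D)\setminus\{x,y\}$ works). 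Granting this equality, the theorem follows: if $\mu(x,y)\ge k+1$ the first alternative holds, and otherwise $\tau(x,y)=\mu(x,y)\le k$, so I take a minimum separator $S$ and put $W:=S_{D-S}^{+}(x)$ and $Z:=V(D)\setminus(W\cup S)$. Then $x\in W$, $y\in Z$, each vertex of $W$ is reached from $x$ by a dipath inside $D[W]$ (every vertex on a dipath from $x$ in $D-S$ is itself reachable from $x$ in $D-S$), and any arc from $W$ to $Z$ would extend such a dipath and place its head in $W$, a contradiction; hence $(W,S,Z)$ is a $k$-separation.

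To prove the equality I would pass to an arc-disjoint setting by \emph{splitting vertices}. Form an auxiliary digraph $D^{*}$: replace every $v\in V(D)\setminus\{x,y\}$ by two vertices $v^{-},v^{+}$ joined by a \emph{split arc} $v^{-}v^{+}$, leave $x$ and $y$ unsplit, and redirect each arc $uv$ of $D$ to run from the out-copy of $u$ (namely $u^{+}$, or $u$ itself if $u\in\{x,y\}$) to the in-copy of $v$ (namely $v^{-}$, or $v$ itself if $v\in\{x,y\}$). Assign capacity $1$ to each split arc and capacity $n$ to every other arc. A family of pairwise internally disjoint $(x,y)$-dipaths in $D$ corresponds exactly to an integral $(x,y)$-flow in $D^{*}$ of equal value, the capacity $1$ on split arcs being precisely what forces the internal vertices to be used at most once; conversely an integral flow decomposes into that many dipaths. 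Moreover, cutting all split arcs is a cut of value $n-2<n$, so every minimum cut has value below $n$ and hence avoids the capacity-$n$ arcs; such a cut is exactly $\{v^{-}v^{+}:v\in S\}$ for an $(x,y)$-separator $S$ whose size is the cut value. The integral max-flow--min-cut theorem, proved by repeatedly augmenting along residual $(x,y)$-dipaths, then gives $\mu(x,y)=\tau(x,y)$.

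The steps that require care, rather than cleverness, are the two translations across the splitting construction: that a maximum integral flow genuinely decomposes into \emph{internally} disjoint dipaths of $D$, and that a minimum cut can be normalized to consist solely of unit-capacity split arcs so that it reads back as a bona fide vertex separator. The only place where one must be attentive is this normalization, together with the final reachability bookkeeping verifying the three defining properties of $(W,S,Z)$; the remainder is the standard augmenting-path analysis. As an alternative avoiding flow machinery, one can prove $\mu\ge\tau$ directly by induction on $|A(D)|$, distinguishing whether some minimum separator differs from both $N^{+}(x)$ and $N^{-}(y)$---in which case $D$ splits along the separator into two digraphs with strictly fewer arcs on which induction applies and the resulting path systems concatenate---or every minimum separator equals one of these two neighbourhoods, in which case deleting an arc of a shortest $(x,y)$-dipath decreases $\tau$ by at most one and induction again closes the argument.
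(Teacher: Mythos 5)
Your proof is correct. The paper itself gives no proof of this statement: it is quoted as the classical Menger theorem, and the only related material is the remark that follows it, describing a ``Menger algorithm'' as a Ford--Fulkerson flow computation performing at most $k+1$ augmenting-path searches. Your argument --- splitting each internal vertex $v$ into $v^-,v^+$ joined by a unit-capacity arc, applying integral max-flow--min-cut, normalizing a minimum cut to consist of split arcs only, reading it back as a separator $S$, and then building the $k$-separation with $W=S^{+}_{D-S}(x)$ and $Z=V(D)\setminus(W\cup S)$ --- is exactly the standard flow-theoretic proof underlying that remark, so in substance you have supplied the proof the paper delegates to the literature. You also correctly isolate the two points needing care: the cut normalization, which rests on the all-split-arcs cut having value $n-2<n$ (this is precisely where the hypothesis $xy\notin A(D)$ enters, since otherwise that arc set would not be a cut at all), and the verification that $(W,S,Z)$ satisfies the reachability condition inside $D[W]$ and has no arc from $W$ to $Z$.
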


For any fixed $k$, there exist algorithms running in linear time that, given a digraph $D$ and two distinct vertices $x$ and $y$ such that $xy\notin A(D)$, returns either $k+1$ internally disjoint $(x,y)$-dipaths in $D$, or a $k$-separation $(W,S,Z)$ of $(x,y)$.
Indeed, in such a particular case, any flow algorithm like Ford--Fulkerson algorithm for example, performs at most $k+1$ incrementing-path searches, because it increments the flow by 1 each time, and we stop when the flow has value $k+1$, or if we find a cut of size less than $k+1$, which corresponds to a $k$-separation.
Moreover each incrementing-path search consists in a search (usually Breadth-First Search) in an auxiliary digraph of the same size, and so is done in linear time. For more details, we refer the reader to the book of Ford and Fulkerson~\cite{FoFu62} or Chapter 7 of~\cite{BoMu08}.
We call such an algorithm a {\it Menger algorithm}.

Observe that using Menger algorithms, one can decide if there are $k$ internally disjoint $(x,y)$-dipaths in a digraph $D$.
If $xy\notin A(D)$, then we apply a Menger algorithm directly;
if $xy\in A(D)$, then we check whether there are $k-1$ internally disjoint $(x,y)$-dipaths in $D\setminus xy$.

Let $D$ be a digraph. Let $X$ and $Y$ be non-empty sets of vertices in $D$.
Two $(X,Y)$-dipaths $P$ and $Q$ are {\it disjoint} if they have no vertices in common, that is if $V(P) \cap V(Q)= \emptyset$.
A {\it $k$-separation} of $(X,Y)$ in $D$ is a partition $(W,S,Z)$ of its vertex set such that $X\subseteq W\cup S$, $Y\subseteq Z\cup S$, $|S| \leq k$, all vertices of $W$ can be reached from $X\setminus S$ by dipaths in $D[W]$,
and there is no arc from $W$ to $Z$.

Let $x$ be a vertex of $D$ and $Y$ be a non-empty subset of $V(D)\setminus \{x\}$.
Two $(x,Y)$-dipaths $P$ and $Q$ are {\it independent} if $V(P) \cap V(Q)= \{x\}$.
A {\it $k$-separation} of $(x,Y)$ in $D$ is a partition $(W,S,Z)$ of its vertex set such that $x\in W$, $Y\subseteq Z\cup S$, $|S| \leq k$, all vertices of $W$ can be reached from $x$ by dipaths in $D[W]$, and there is no arc from $W$ to $Z$.

Let $y$ be a vertex of $D$ and $X$ be a non-empty subset of $V(D)\setminus \{y\}$.
Two $(X,y)$-paths are {\it independent} if $V(P) \cap V(Q)= \{y\}$.
A {\it $k$-separation} of $(X,y)$ in $D$ is a partition $(W,S,Z)$ of its vertex set such that $W$ and $Z$ are non-empty, $X \subseteq W \cup S$, $y\in Z$, $|S| \leq k$, all vertices of $W$ can be reached from $X\setminus S$ by dipaths in $D[W]$,
and there are no arcs from $W$ to $Z$.

Let $W\subset V(D)$. The digraph $D_W$ is the one obtained from $D$ by adding a vertex $s_W$ and the arcs $s_Ww$ for all $w\in W$ and the digraph $D^W$ is the one obtained from $D$ by adding a vertex $t_W$ and the arcs $wt_W$ for all $w\in W$.

Applying Theorem~\ref{Menger} to $D_X^Y$ and $(s_X,t_Y)$ (resp.\ $D^Y$ and $(x,t_Y)$, $D_X$ and $(s_X, y)$), we obtain the following version of Menger's Theorem.

\begin{theorem}[Menger]\label{Menger-set}
Let $k$ be a positive integer, and let $D$ be a digraph. Then the following hold.
\begin{itemize}
\item[(i)] If $X$ and $Y$ are two non-empty subsets of $V(D)$, then, in $D$, either there are $k+1$ pairwise disjoint $(X,Y)$-dipaths, or there is a $k$-separation of $(X,Y)$.
\item[(ii)] If $x$ is a vertex of $D$ and $Y$ is a non-empty subset of $V(D)$, then, in $D$, either there are $k+1$ pairwise independent $(x,Y)$-dipaths in $D$, or there is a $k$-separation of $(x,Y)$.
\item[(iii)] If $X$ is a non-empty subset of $V(D)$ and $y$ is a vertex of $D$ and , then, in $D$, either there are $k+1$ pairwise independent $(X,y)$-dipaths in $D$, or there is a $k$-separation of $(X,y)$.
\end{itemize}
\end{theorem}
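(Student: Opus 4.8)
The plan is to derive all three statements from the vertex version of Menger's Theorem (Theorem~\ref{Menger}), applied to the auxiliary digraphs named just before the statement, namely $D_X^Y$, $D^Y$ and $D_X$. The three parts are completely parallel, so I would carry out part~(i) in full and then indicate the one-line modifications needed for (ii) and (iii). For part~(i), first I would form $D_X^Y$, that is, add a new source $s_X$ together with an arc $s_Xw$ for every $w\in X$, and a new sink $t_Y$ together with an arc $wt_Y$ for every $w\in Y$. By construction $s_Xt_Y\notin A(D_X^Y)$, so Theorem~\ref{Menger} applies to the pair $(s_X,t_Y)$ with parameter $k$ and returns one of its two alternatives.

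In the first alternative there are $k+1$ pairwise internally disjoint $(s_X,t_Y)$-dipaths. Each such dipath has the shape $s_X,a,\dots,b,t_Y$ with $a\in X$, since the out-neighbours of $s_X$ are exactly $X$, and $b\in Y$, since the in-neighbours of $t_Y$ are exactly $Y$; deleting the two apices $s_X$ and $t_Y$ leaves a dipath of $D$ from a vertex of $X$ to a vertex of $Y$. As the original dipaths met only in $\{s_X,t_Y\}$, the residual dipaths are pairwise vertex-disjoint. A residual dipath need not yet be an $(X,Y)$-dipath in the strict sense, because it may revisit $X\cup Y$ internally, so I would truncate each one to the segment running from its last vertex in $X$ to the first vertex in $Y$ after that; this segment has its initial vertex in $X$, its terminal vertex in $Y$, and no internal vertex in $X\cup Y$. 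Truncation preserves disjointness, so this yields $k+1$ pairwise disjoint $(X,Y)$-dipaths.

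In the second alternative there is a $k$-separation $(W,S,Z)$ of $(s_X,t_Y)$ in $D_X^Y$, with $s_X\in W$ and $t_Y\in Z$, whence $S\subseteq V(D)$. I would set $W'=W\setminus\{s_X\}$ and $Z'=Z\setminus\{t_Y\}$ and verify that $(W',S,Z')$ is a $k$-separation of $(X,Y)$ in $D$. It is a partition of $V(D)$ because $(W,S,Z)$ partitions $V(D)\cup\{s_X,t_Y\}$. The containment $X\subseteq W'\cup S$ holds since a vertex of $X$ lying in $Z'$ would be the head of an arc from $s_X\in W$ to $Z$, which is forbidden; symmetrically $Y\subseteq Z'\cup S$, because a vertex of $Y$ in $W'$ would be the tail of an arc to $t_Y\in Z$. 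Reachability of every $w\in W'$ from $X\setminus S$ in $D[W']$ follows by dropping the initial vertex $s_X$ from a witnessing dipath of $D_X^Y[W]$, whose second vertex necessarily lies in $X\cap W'\subseteq X\setminus S$; and there is no arc from $W'$ to $Z'$ in $D$ because $D$ is a subdigraph of $D_X^Y$ and there are no arcs from $W$ to $Z$.

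Parts (ii) and (iii) are the same argument with only one added vertex: for (ii) I would add just the sink $t_Y$, apply Theorem~\ref{Menger} to $(x,t_Y)$ in $D^Y$ (using $x\notin Y$ to guarantee $xt_Y\notin A(D^Y)$), and observe that internally disjoint $(x,t_Y)$-dipaths restrict to independent $(x,Y)$-dipaths, while the resulting separation translates exactly as above; for (iii) I would add just the source $s_X$ and argue dually with $(s_X,y)$ in $D_X$. I do not anticipate a genuine obstacle here, since the entire content is the routine but careful translation between the auxiliary digraph and $D$; the one point that actually demands attention is the truncation step enforcing the requirement that the internal vertices of an $(X,Y)$-dipath avoid $X\cup Y$.
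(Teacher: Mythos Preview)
Your proposal is correct and follows exactly the approach the paper indicates: the paper's entire argument is the single sentence preceding the theorem, namely to apply Theorem~\ref{Menger} to $D_X^Y$ and $(s_X,t_Y)$ (respectively $D^Y$ and $(x,t_Y)$, $D_X$ and $(s_X,y)$), and you have simply supplied the routine verification that this reduction works. Your attention to the truncation step (to enforce that internal vertices avoid $X\cup Y$) and to the reachability clause in the translated separation is appropriate and not spelled out in the paper.
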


Moreover, a Menger Algorithm applied to $D_X^Y$ and $(s_X,t_Y)$ (resp. $D^Y$ and $(x,t_Y)$, $D_X$ and $(s_X, Y)$) finds in linear time
the $k+1$ dipaths or the separation as described in Theorem~\ref{Menger-set} (i) (resp. (ii), (iii)).

Let $x$ and $y$ be two vertices.
An {\it $(x,y)$-handle} is an $(x,y)$-dipath if $x\neq y$, and a directed cycle containing $x$ if $x=y$.
Let $y_1, \dots , y_p$ be $p$ distinct vertices, $k_1, \dots , k_p$ be positive integers and set $k=k_1+\cdots +\cdots k_p$.
One can decide if there are $k$ internally disjoint handles $P_1, \dots , P_k$ such that $k_i$ of them are $(x,y_i)$-handles, $1\leq i\leq p$, by applying a Menger algorithm between in an auxiliary digraph $D'$.  This digraph is obtained from $D-(\{y_1, \dots , y_p\}\setminus \{x\})$ as follows. Add a new vertex $y$. For each $i$, create a set $B_i$ of $k_i$ new vertices and all arcs from $N^-_D(y_i)$ to $B_i$ and from $B_i$ to $y$.

Similarly, suppose that $X$ is a set of vertices,  $y_1, \dots , y_p$ be $p$ distinct vertices not in $X$, and $k=k_1+\cdots +\cdots k_p$. One can decide if there are $k$ internally disjoint paths $P_1, \dots , P_k$, all with distinct initial vertices in $X$, and  such that $k_i$ of them are terminating in $y_i$, $1\leq i\leq p$.

\subsection{Subdivision with prescribed original vertices}

Let $F$ be a digraph and $u$ a vertex in $F$.
In an $F$-subdivision $S$, the vertex corresponding to $u$ is called the {\it $u$-vertex of $S$}. A vertex corresponding to some vertex $u\in F$ is called an {\it original} vertex.

Bang-Jensen et al.~\cite{BHM} proved that, given a digraph $D$ and a vertex $z$ in $D$, one can decide in polynomial time if $D$ contains a $W_2$-subdivision with centre $z$. Therefore $W_2$-\pb\ is polynomial-time solvable.
We now prove that we can also decide in polynomial time
if there is  a $W_2$-subdivision with two prescribed original vertices.

\begin{lemma}\label{W2}
Let $W_2$ be the $2$-wheel with centre $c$ and rim $aba$.
Given a digraph $D$ and two vertices $b'$ and $c'$, one can decide in $O(n^2(n+m))$ time if there is a $W_2$-subdivision
in $D$ with $b$-vertex $b'$ and $c$-vertex $c'$.
\end{lemma}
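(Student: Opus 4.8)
The plan is to reduce the problem to a bounded number of Menger-type computations. First I would record the structure that must be found: writing the rim of $W_2$ as the directed $2$-cycle $a\to b\to a$ and the centre arcs as $c\to a$, $c\to b$, a $W_2$-subdivision with $b$-vertex $b'$ and $c$-vertex $c'$ is exactly a choice of an $a$-vertex $a'\notin\{b',c'\}$ together with four internally disjoint dipaths $P_1\colon c'\to a'$, $P_2\colon c'\to b'$, $P_3\colon a'\to b'$ and $P_4\colon b'\to a'$ that pairwise meet only in their prescribed endvertices. Equivalently, one needs a directed cycle $Z$ through $b'$ (the subdivided rim, namely $P_3\cup P_4$) together with two independent dipaths from $c'$ landing on two distinct vertices of $Z$, one of them being $b'$; the second landing vertex then plays the role of $a'$.

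The essential difficulty, which dictates the whole approach, is that $a'$ must be left free. Indeed, if one fixed $a'$ as well, then the requirement that $P_3$ and $P_4$ be internally disjoint would be precisely the (NP-complete) problem of finding a directed cycle through two prescribed vertices $a'$ and $b'$; so no algorithm can afford to first commit to both rim vertices and then look for the rim. What saves us is that the rim must pass through $b'$ but only through some unspecified $a'$, and a cycle constrained at a single prescribed vertex is easy to control.

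Accordingly, I would run an outer loop over the $O(n^2)$ ordered pairs $(a',q)$, where $q$ is the candidate out-neighbour of $b'$ on the rim (so that $b'q\in A(D)$ and $P_4$ starts with the arc $b'q$ and ends at $a'$). Fixing $q$ \emph{opens} the rim $2$-cycle: the rim becomes the concatenation of a $(q,a')$-dipath and an $(a',b')$-dipath, i.e.\ a single forward $(q,b')$-dipath through $a'$, so there is no longer any pair of dipaths running in opposite directions between two prescribed vertices. For each guess, the task is then to find a system of internally disjoint dipaths realising the fixed acyclic pattern $\{\,c'\to a',\ c'\to b',\ q\to a',\ a'\to b'\,\}$ on the shared original vertices $\{c',a',b'\}$ (with $q$ interior to $P_4$). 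Because this pattern is acyclic and all coincidences are forced to occur at the three original vertices, I expect it to be solvable by a constant number of Menger computations (Theorem~\ref{Menger-set}) and the independent-paths and handle machinery of Section~\ref{sec:known}: concretely, split the prescribed vertex $a'$ into an in-copy and an out-copy joined by a capacity-one arc, impose the correct local degrees at $a'$ (in-degree $2$, out-degree $1$), and test for the required vertex-disjoint fan from $c'$ together with the $(q,a')$- and $(a',b')$-segments. Since each such test uses flows of bounded value it runs in $O(n+m)$ time, giving $O(n^2(n+m))$ overall; the subdivision is then read off from a successful pattern, and conversely any $W_2$-subdivision yields some pair $(a',q)$ for which the test succeeds.

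The step I expect to be the main obstacle is precisely this last reduction: guaranteeing that a network solution is a genuine $W_2$-subdivision and not a degenerate configuration. The danger is that a feasible flow routes the unit leaving $q$ directly to $b'$ and sends both units from $c'$ into $a'$, so that $a'$ fails to lie on a cycle through $b'$; ruling this out forces one to encode, via the vertex split of $a'$ and appropriate lower bounds, both that $a'$ relays one unit onwards to $b'$ (producing $P_3$) and that the unit originating at $q$ actually reaches $a'$ (producing $P_4$). Establishing that this encoding is faithful---that a valid flow exists if and only if the prescribed four-path pattern does---is where the care lies, and it is what makes the polynomial bound possible despite the NP-completeness of the two-prescribed-vertex cycle problem.
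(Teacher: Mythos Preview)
Your proposal contains a genuine gap, and it is precisely the one you yourself flagged. You correctly observe that fixing the $a$-vertex $a'$ turns the rim into the NP-complete ``directed cycle through two prescribed vertices'' problem. But then you fix $a'$ anyway in your $(a',q)$ loop, hoping that additionally guessing the first rim arc $b'q$ will rescue you. It does not: after deleting $b'q$, the rim is a $(q,b')$-dipath \emph{through} $a'$, i.e.\ two internally disjoint dipaths $q\to a'$ and $a'\to b'$. This ``dipath through a prescribed vertex'' problem is itself NP-complete (reduce {\sc $2$-Linkage} on $(x_1,y_1),(x_2,y_2)$ by adding a fresh vertex $a'$ with arcs $y_1a'$ and $a'x_2$, then ask for an $(x_1,y_2)$-dipath through $a'$). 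So for a fixed $(a',q)$ your residual pattern $\{c'\to a',\,c'\to b',\,q\to a',\,a'\to b'\}$ is already hard, and no single-commodity Menger/flow computation can solve it: such a flow cannot control which source is matched to which sink, so the ``bad'' routing you describe (the $q$-unit going straight to $b'$ and both $c'$-units entering $a'$) is unavoidable. Vertex-splitting $a'$ and adding lower bounds fixes the local degrees at $a'$ but still cannot enforce the pairing; this is a multicommodity constraint, not a capacity constraint.

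The paper avoids this trap by \emph{never} guessing $a'$. It first isolates the strong component $S$ of $b'$ in $D-c'$ (the rim must live inside $S$), and then loops over the $O(n^2)$ pairs $\{x_1,x_2\}\subseteq V(S)$ of \emph{entry points} of the two centre-paths into $S$. For each pair it runs two Menger tests: two independent $(c',\{x_1,x_2\})$-dipaths in $D-(S\setminus\{x_1,x_2\})$, and two independent $(\{x_1,x_2\},b')$-dipaths $Q_1,Q_2$ inside $S$. If both succeed, strong connectivity of $S$ supplies a $(b',\,(Q_1\cup Q_2)\setminus\{b'\})$-dipath, and the $a$-vertex is simply wherever this return path first meets $Q_1\cup Q_2$. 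Thus the rim (and the $a$-vertex) are produced for free from strong connectivity rather than tested for a prescribed $a'$, and each iteration is genuinely two linear-time Menger calls.
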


\begin{proof}
Let us call a $W_2$-subdivision with $b$-vertex $b'$ and $c$-vertex $c'$ a {\it $(b',c')$-forced} $W_2$-subdivision.
Let $S$ be the strong component of $b'$ in $D-c'$.
The key element is the following claim.

\begin{claim}\label{claim-W2}
$D$ contains a $(b', c')$-forced $W_2$-subdivision if and only if there exist distinct vertices $x_1$ and $x_2$ in $V(S)$ such that
there are two independent $(c',\{x_1,x_2\})$-dipaths $P_1$ and $P_2$ in $D-(S\setminus \{x_1,x_2\})$ and there are two independent $(\{x_1,x_2\},b')$-dipaths $Q_1$ and $Q_2$ in $S$.
\end{claim}

\begin{subproof}
Clearly, existence of two vertices $x_1$, $x_2$ and four dipaths $P_1,P_2,Q_1,Q_2$ as in the statement is a necessary condition for the
existence of a $(b', c')$-forced $W_2$-subdivision.
Let us now prove that it is also sufficient.
Assume that such vertices $x_1, x_2$ and dipaths $P_1,P_2,Q_1,Q_2$ exist.
Since $S$ is strong, it contains a dipath $R$ from $b'$ to
$(V(Q_1)\cup V(Q_2))\setminus \{b'\}$. (This set is not empty
since it contains $\{x_1,x_2\}\setminus \{b'\}$.)
Then $P_1 \cup P_2 \cup Q_1 \cup Q_2 \cup R$ is a
$(b', c')$-forced $W_2$-subdivision.
\end{subproof}

Our algorithm is the following. We first compute $S$, which can be done in linear time.
Then for every pair $\{x_1, x_2\}$ of vertices of $S$, we check by
running twice a Menger algorithm if the dipaths $P_1$ and $P_2$, and $Q_1$ and $Q_2$ as described in Claim~\ref{claim-W2} exist.
If yes, we return `yes', otherwise we return `no'.
The validity of this algorithm is given by Claim~\ref{claim-W2}. Since there are $O(n^2)$ pairs of vertices $\{x_1, x_2\}$, the algorithm runs in $O(n^2(n+m))$ time.
\end{proof}

A {\it spider} is a tree obtained from disjoint
directed paths by identifying one end of each path into a single vertex. This vertex is called the {\it body} of the spider.
Observe that if $T$ is a spider, then every $T$-subdivision contains $T$ as a subdigraph.
Hence a digraph contains a $T$-subdivision if and only if it contains $T$ as a subdigraph.
This implies that $T$-\pb\ can be solved in $O(n^{|T|})$ time. It also easily implies the following.

\begin{lemma}\label{lem:union-spider}
Let $F$ be a digraph and $T$ a spider.
If $F$ is tractable, then $F+T$ is also tractable.
\end{lemma}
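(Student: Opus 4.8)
The plan is to reduce the recognition of an $(F+T)$-subdivision to polynomially many calls of the polynomial-time algorithm that solves $F$-\pb. The starting point is the observation recorded just above the statement: since $T$ is a spider, a digraph contains a $T$-subdivision if and only if it contains $T$ itself as a subdigraph. I will use this to replace the (a priori unbounded) search for a $T$-subdivision by a brute-force search over the polynomially many copies of $T$.

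First I would record the structural equivalence. A subdivision of the disjoint union $F+T$ is precisely the disjoint union of an $F$-subdivision and a $T$-subdivision, and these two parts are vertex-disjoint, since every arc of $F+T$ lies in exactly one of the two components and the internal vertices created by subdividing are all distinct. Hence $D$ contains an $(F+T)$-subdivision if and only if there are vertex-disjoint subdigraphs $S_F$ and $S_T$ of $D$ such that $S_F$ is an $F$-subdivision and $S_T$ is a $T$-subdivision. Using the spider property, $S_T$ contains a copy $T'$ of $T$ as a subdigraph, and $T'$ remains vertex-disjoint from $S_F$ because $T'\subseteq S_T$; conversely, any copy $T'$ of $T$ together with a vertex-disjoint $F$-subdivision yields an $(F+T)$-subdivision. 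Therefore $D$ contains an $(F+T)$-subdivision if and only if there is a copy $T'$ of $T$ in $D$ such that $D-V(T')$ contains an $F$-subdivision.

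This equivalence immediately suggests the algorithm. Enumerate every copy $T'$ of $T$ in $D$: this amounts to choosing an injection from $V(T)$ into $V(D)$ and checking that each arc of $T$ maps to an arc of $D$, of which there are $O(n^{|T|})$ and each is verified in constant time since $|T|$ is a fixed constant. For each such $T'$, run the algorithm for $F$-\pb\ (which exists and is polynomial by the tractability of $F$) on the digraph $D-V(T')$; return `yes' as soon as one call succeeds, and `no' if all fail. Correctness is exactly the equivalence of the previous paragraph, and since there are only polynomially many copies $T'$ and each test is polynomial, the whole procedure runs in polynomial time, so $F+T$ is tractable.

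The individual steps are routine, so the only point requiring care — and the place where the hypothesis that $T$ is a spider is essential — is the passage from ``some $T$-subdivision disjoint from $S_F$'' to ``some copy of $T$ disjoint from $S_F$''. For a general fixed digraph this passage would fail, because there are arbitrarily large subdivisions and one cannot search over all of them; it is precisely the fact that every $T$-subdivision contains $T$ as a subdigraph that turns the unbounded search for a subdivision into a finite, polynomial search for the fixed subdigraph $T$. Once this is granted, disjointness is automatic, since the copy $T'$ sits inside $S_T$, which is vertex-disjoint from $S_F$.
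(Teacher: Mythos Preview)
Your proof is correct and follows exactly the approach the paper has in mind: the paper does not spell out a proof but says the lemma ``easily'' follows from the observation that every $T$-subdivision contains $T$ as a subdigraph, and your argument is the natural elaboration of that remark. The only comment is that the paper leaves the proof implicit, so your write-up is in fact more detailed than the original.
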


{\it Gluing} a spider $T$ with body $b$ to $F$ at a vertex $u\in V(F)$ consists in taking the disjoint union of $F$ and $T$ and identifying $u$ and $b$.

\begin{lemma}\label{lem:add-spider}
Let $F$ be a digraph and $u$ a vertex of $F$.
If given a digraph $D$ and a vertex $v$ of $D$, one can decide in polynomial time
if there is an $F$-subdivision in $D$ such that $v$ is the $u$-vertex,
then any digraph obtained from $F$ by gluing a spider at $u$ is tractable.
\end{lemma}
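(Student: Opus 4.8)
The plan is to reduce $F'$-\pb, where $F'$ denotes a digraph obtained from $F$ by gluing a spider $T$ with body $b$ at $u$ (so that in $F'$ the vertices $u$ and $b$ coincide), to the prescribed-vertex $F$-subdivision problem guaranteed by the hypothesis. The guiding observation is that an $F'$-subdivision in a digraph $D$ decomposes, at its $u$-vertex $v$, into two parts that share only $v$: an $F$-subdivision with $u$-vertex $v$, and a $T$-subdivision with body $v$. Since $T$ is a spider, its legs are pairwise internally disjoint dipaths leaving or entering $v$, and their only contact with the $F$-part is $v$ itself. So the task becomes: locate a vertex $v$, a spider attached at $v$, and an $F$-subdivision anchored at $v$, all vertex-disjoint apart from $v$.

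The crucial simplification I would establish is that the spider legs may be taken to have their \emph{exact} lengths in $T$, so that the $T$-part occupies only a constant number of vertices. I would prove this by a truncation argument: given any $F'$-subdivision $S$ with $u$-vertex $v$, replace each realized out-leg (a dipath from $v$ of length at least its length $\ell$ in $T$) by its initial subdipath of length exactly $\ell$, and symmetrically truncate each in-leg to its final subdipath of length exactly $\ell$. Cutting from the body side only discards vertices, so the legs remain pairwise internally disjoint and disjoint from the $F$-part of $S$, and each truncated leg still realizes the corresponding arc(s) of $T$. This is exactly the leg-by-leg form of the earlier remark that a spider-subdivision contains the spider as a subdigraph. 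After truncation the $T$-part is a copy of $T$ (as a subdigraph) with body $v$, using a fixed number $|V(T)|-1$ of vertices besides $v$.

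With this in hand the algorithm is the following: for every vertex $v$ of $D$ and every copy $T^*$ of $T$ in $D$ having $v$ as body --- there are $O(n^{|V(T)|-1})$ of these, enumerable in polynomial time since $T$ is fixed --- I would run the assumed polynomial-time procedure to decide whether $D-(V(T^*)\setminus\{v\})$ contains an $F$-subdivision with $u$-vertex $v$, answering `yes' iff some call succeeds. Soundness is immediate: a successful call yields an $F$-subdivision at $v$ and a copy $T^*$ of $T$ at $v$ meeting only in $v$, whose union is an $F'$-subdivision. Completeness follows from the truncation argument: any $F'$-subdivision yields, at its $u$-vertex $v$, a truncated copy $T^*$ whose non-body vertices avoid the $F$-part, so that $F$-part is an $F$-subdivision with $u$-vertex $v$ inside $D-(V(T^*)\setminus\{v\})$, and the corresponding call succeeds. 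The running time is polynomial, being $n\cdot O(n^{|V(T)|-1})$ invocations of the oracle, each polynomial.

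The step I expect to demand the most care is completeness --- verifying that truncation preserves both the pairwise internal disjointness of the legs and their disjointness from the $F$-part, and that the resulting $T^*$ is a legitimate copy of $T$ at $v$ (with distinct tips clashing neither with one another nor with the original vertices of the $F$-part). Here the spider structure is what makes the argument go through: because every leg meets the rest of $F'$ only at the body and its tip is otherwise unconstrained, shortening from the body can only free vertices and never creates a new conflict. Everything else --- enumerating the copies of $T$ and invoking the oracle on the residual digraph --- is routine.
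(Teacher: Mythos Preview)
Your proposal is correct and follows essentially the same approach as the paper: both reduce to a canonical $F'$-subdivision in which the spider $T$ is realized without subdivision, then enumerate over the vertex $v$ and the constant-size copy of $T$ rooted at $v$, and invoke the prescribed-vertex $F$-subdivision oracle on the remainder. The paper states the canonical-form observation in one line (``Clearly, every $F'$-subdivision contains an $F'$-subdivision in which the arcs of $T$ are not subdivided''), whereas you spell out the truncation argument explicitly; one small slip to fix is the phrase ``cutting from the body side'' --- your described operation (keep the initial segment of each out-leg and the final segment of each in-leg) actually discards from the \emph{tip} side, which is indeed what makes the disjointness preserved.
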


\begin{proof}
Let $T$ be a spider with body $b$ and let $F'$ be the digraph obtained by gluing $T$ to $F$ at $u$.
Clearly, every $F'$-subdivision contains an $F'$-subdivision in which the arcs of $T$ are not subdivided.
Such an $F'$-subdivision is said to be {\it canonical}.

Consider the following algorithm.
For every vertex $v$ we repeat the following.
For every set $W$ of $|V(T)|-1$ vertices, we check whether $D[W\cup \{v\}]$ contains a copy of $T$ with body $v$.
This can be done in constant time.
Then we check if $D-W$ contains an $F$-subdivision with $u$-vertex $v$. This can be done in polynomial time by our assumption.

This algorithm clearly decides in polynomial time whether a given digraph $D$ contains a canonical $F'$-subdivision.
\end{proof}

A {\it $(k_1, \dots, k_p)$-spindle} is the union of $p$ pairwise internally disjoint $(a,b)$-dipaths
$P_1, \dots , P_p$ of respective lengths $k_1, \dots , k_p$. The vertex $a$ is said to be the {\it tail} of the spindle and $b$ its {\it head}.
Bang-Jensen et al.~\cite{BHM} proved that spindles are tractable. Their proof uses the following result.

\begin{theorem}[Bang-Jensen et al.~\cite{BHM}] \label{spindle}
Let $F$ be a spindle with tail $a$ and head $b$.
Given a digraph $D$ and two vertices $a'$ and $b'$, we can decide in polynomial time if $D$ contains an $F$-subdivision with $a$-vertex $a'$ and $b$-vertex $b'$.
\end{theorem}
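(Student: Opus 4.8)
The plan is to reduce the problem to finding internally disjoint dipaths of prescribed minimum lengths, and then to force those lengths by guessing short prefixes. Writing $F$ as a $(k_1,\dots,k_p)$-spindle, an $F$-subdivision with $a$-vertex $a'$ and $b$-vertex $b'$ is exactly a collection of $p$ pairwise internally disjoint $(a',b')$-dipaths $Q_1,\dots,Q_p$ together with an assignment to the branches such that the dipath realizing the branch of length $k_i$ has length at least $k_i$. So the whole task is to decide whether $D$ contains $p$ internally disjoint $(a',b')$-dipaths whose lengths can be matched to the requirements $k_1,\dots,k_p$.

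The difficulty is the simultaneous control of \emph{internal disjointness} and of \emph{lower bounds on the lengths}: Menger's theorem (Theorem~\ref{Menger-set}) readily produces internally disjoint or independent dipaths, but says nothing about how long they are. The key observation is that the branch lengths $k_i$ are fixed constants, so the total length that must be guaranteed in excess of the trivial bound, namely $c := \sum_{i=1}^p (k_i-1)$, is a constant. First I would enumerate, for each branch $i$, a \emph{stub}: a dipath $a'=u_i^0,u_i^1,\dots,u_i^{k_i-1}$ of length exactly $k_i-1$ in $D$ that avoids $b'$, chosen so that all $p$ stubs are internally disjoint (sharing only $a'$). There are $O(n^{c})$ such configurations, which is polynomial for fixed $F$, and the case $k_i=1$ simply contributes an empty stub.

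For a fixed admissible stub configuration, let $w_i=u_i^{k_i-1}$ be the end of the $i$-th stub, so $w_i=a'$ precisely when $k_i=1$. It then remains to decide whether, in the digraph obtained from $D$ by deleting all stub-internal vertices, there are $p$ pairwise internally disjoint dipaths ending at $b'$, one starting at each $w_i$ and none re-entering $a'$ (except at the shared starts coming from the length-$1$ branches). Concatenating each stub with its completion yields an $(a',b')$-dipath of length at least $(k_i-1)+1=k_i$, which realizes the branch of length $k_i$; conversely, peeling the first $k_i-1$ internal vertices off any realizing path recovers an admissible configuration, so the reduction is exact. This last step is a standard independent-dipaths question: passing to the converse digraph it becomes finding independent dipaths from the single vertex $b'$ to the prescribed targets with multiplicity $r$ at $a'$ (where $r$ is the number of branches with $k_i=1$) and multiplicity one at each distinct $w_i$, which is solved in polynomial time by a Menger algorithm on the auxiliary digraph described after Theorem~\ref{Menger-set}.

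The main obstacle, and essentially the only place requiring care, is the bookkeeping in this final Menger step: one must enforce genuine internal vertex-disjointness (via the usual vertex-splitting in the flow network), allow several branches to share the start $a'$ when some $k_i=1$ while keeping the stub-ends $w_i$ pairwise distinct and disjoint, and forbid the completions from reusing $a'$ or any stub vertex. Once these constraints are encoded into the auxiliary digraph, correctness follows from the exactness of the reduction, and the total running time is $O(n^{c})$ times the cost of a single Menger computation, hence polynomial.
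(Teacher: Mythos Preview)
Your argument is correct. The reduction is exact: an $F$-subdivision with the prescribed $a$- and $b$-vertices is precisely $p$ internally disjoint $(a',b')$-dipaths of lengths at least $k_1,\dots,k_p$, and guessing the first $k_i-1$ internal vertices of each branch (a constant total of $c=\sum_i(k_i-1)$ vertices) leaves exactly an independent-paths problem solvable by the auxiliary Menger construction described after Theorem~\ref{Menger-set}. Your bookkeeping remarks are the right ones: once the stub-internal vertices are deleted and the targets $a'$ (with multiplicity $r$) and the distinct $w_i$ are declared, the definition of $(b',Y)$-dipath in the paper already forbids the completions from revisiting $a'$ or any other $w_j$, so the concatenated paths are genuinely internally disjoint.

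There is nothing in the present paper to compare with: Theorem~\ref{spindle} is only \emph{quoted} here from \cite{BHM} and is not reproved. That said, your approach is exactly in the spirit of the tools this paper sets up (and of Lemma~\ref{lem:sub-edge}, which performs the same ``guess a short prefix, then run Menger'' trick one arc at a time), and it is essentially the natural proof one would expect in \cite{BHM}. One cosmetic point: rather than guessing all $c$ prefix vertices at once, you could assume $k_1\ge k_2\ge\cdots\ge k_p$ and observe that it suffices to find $p$ internally disjoint $(a',b')$-dipaths whose $j$ longest have length at least $k_j$ for every $j$; this leads to the same enumeration but makes the matching of paths to branches automatic. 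Either way the running time is $O(n^{c})$ calls to a linear-time Menger algorithm, which is polynomial for fixed $F$.
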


The {\it $(k_1, \dots, k_p; l_1, \dots , l_q)$-bispindle}, denoted $B(k_1, \dots, k_p; l_1, \dots , l_q)$, is the digraph obtained from the disjoint union of a $(k_1, \dots, k_p)$-spindle
with tail $a_1$ and head $b_1$ and an $(l_1, \dots , l_q)$-spindle with tail $a_2$ and head $b_2$
by identifying $a_1$ with $b_2$ into a vertex $a$, and $a_2$ with $b_1$ into a vertex $b$.
The vertices $a$ and $b$ are called, respectively, the {\it left node} and the {\it right node} of the bispindle. The directed $(a,b)$-paths are called the {\it forward paths}, while the directed $(b,a)$-paths are called the {\it backward paths}.
Bang-Jensen et al.~\cite{BHM} proved that a bispindle is intractable
if and only if $p\ge1$, $q\ge1$ and $p+q\ge4$.
To prove that a bispindle with two forward paths and one backward path is tractable, they provided the following theorem.

\begin{theorem}[Bang-Jensen et al.~\cite{BHM}] \label{bispindle}
Let $F$ be a bispindle with two forward paths and one backward path, and let $x$ be one of its nodes.
Given a digraph $D$ and a vertex $a'$, we can decide in polynomial time if $D$ contains an $F$-subdivision with $a$-vertex $a'$.
\end{theorem}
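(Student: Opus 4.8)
The plan is to prescribe one node, reduce to a version in which \emph{both} nodes are fixed, and then solve the resulting disjoint-paths problem with a flow computation. Write $F=B(k_1,k_2;l_1)$ with left node $a$, right node $b$, forward $(a,b)$-dipaths $P_1,P_2$ of lengths $k_1,k_2$, and backward $(b,a)$-dipath $Q$ of length $l_1$. First I would dispose of the choice of node $x$ by directional duality: an $F$-subdivision with $b$-vertex $a'$ in $D$ is exactly an $\overline{F}$-subdivision with $a$-vertex $a'$ in $\overline{D}$, and $\overline{F}$ is again a bispindle with two forward and one backward path; so it suffices to treat $x=a$, i.e.\ to decide whether $D$ has an $F$-subdivision whose $a$-vertex is the prescribed $a'$.

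Next I would guess the $b$-vertex. Running over all $b'\in V(D)$ (a factor $n$), it remains to decide, for fixed $a'$ and $b'$, whether $D$ contains pairwise internally disjoint dipaths $P_1',P_2'$ from $a'$ to $b'$ and $Q'$ from $b'$ to $a'$ of lengths at least $k_1,k_2,l_1$ respectively. The length requirements can be removed at the cost of a polynomial factor: since $k_1,k_2,l_1$ are constants, I would guess the constantly many initial and terminal stubs of the three paths — an initial dipath of length $k_i-1$ out of $a'$ for each forward path, a dipath of length $l_1-1$ into $a'$ for the backward path, and the analogous stubs at $b'$ — deleting their interiors from $D$ and contracting each stub to its far endpoint. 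After this reduction the task becomes the length-free \emph{core problem}: decide whether there are two internally disjoint $(a',b')$-dipaths and a $(b',a')$-dipath internally disjoint from them.

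The core problem is where the real work lies, and it is the step I expect to be the main obstacle. It is tempting to find the forward spindle and the backward path independently — Theorem~\ref{spindle} already decides the existence of two internally disjoint $(a',b')$-dipaths with prescribed endpoints — but the two parts compete for the same internal vertices, and naively one cannot couple them (a poorly chosen spindle may separate $b'$ from $a'$). Because all three dipaths run between the \emph{same} pair of terminals $a',b'$, I would encode the core problem as a single disjoint-paths requirement in the vertex-split digraph $\hat D$ (each internal vertex $v$ replaced by an arc $v^-\to v^+$ of capacity one, with $a'$ and $b'$ split into in/out copies) and solve it by a Menger-type computation as in Theorem~\ref{Menger-set}. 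The delicate point, which must be argued carefully, is to force the two forward units to actually reach $b'$ and the backward unit to reach $a'$, rather than degenerating into a short cycle at a terminal; this is precisely where the coincidence of the two terminals is used, collapsing what would otherwise be an intractable two-commodity demand into a single feasible-flow condition that a Menger algorithm decides in polynomial time.

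Finally, assembling the pieces: for each of the $O(n)$ choices of $b'$ and each of the polynomially many stub guesses, one Menger computation decides the core problem, so the whole procedure runs in polynomial time; a `yes' at any stage yields the required $F$-subdivision with $a$-vertex $a'$, and `no' everywhere certifies that none exists. By Lemma~\ref{lem:find} the decision procedure can be turned into one that actually produces the subdivision without increasing the complexity.
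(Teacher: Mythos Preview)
The theorem is not proved in this paper; it is quoted from \cite{BHM}, so there is no in-paper argument to compare against. That said, your proposal has a real gap at the step you yourself flag as ``delicate'': the single Menger/flow computation you describe for the core problem does \emph{not} decide it.

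Concretely, after your stub reduction you want, for fixed $a'$ and $b'$, two internally disjoint $(a',b')$-dipaths together with a $(b',a')$-dipath internally disjoint from both. Encoding this as a value-$3$ flow between split copies of $a'$ and $b'$ (two units out of $a'$, one out of $b'$; two into $b'$, one into $a'$) admits two kinds of integral decompositions: the desired one (two $a'\!\to\! b'$ paths and one $b'\!\to\! a'$ path), but also one $(a',b')$-dipath together with a cycle through $a'$ and a cycle through $b'$. The flow test cannot distinguish these. A small digraph shows the failure: take $a'\to x\to a'$, $b'\to y\to b'$, and $a'\to z\to b'$, with no other arcs. Your auxiliary network carries a flow of value~$3$, yet $a'$ has only one out-arc into each of its two ``branches'', so there is no pair of internally disjoint $(a',b')$-dipaths for \emph{any} choice of $b'$, hence no $F$-subdivision with $a$-vertex $a'$ at all. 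Thus the Menger reduction yields false positives; this is genuinely a two-commodity situation that does not collapse just because the four terminals coincide in pairs.

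The surrounding scaffolding (directional duality to fix $x=a$, enumerating $b'$, guessing constant-length stubs) is fine, but the core step needs a different idea. In \cite{BHM} the argument is structural rather than a single flow call; one route is to work inside the strong component of $a'$, guess the interface where the backward path re-enters it, and use separate Menger calls whose correctness is certified by a rerouting lemma (in the spirit of Claim~\ref{claim-W2} here). Your write-up would need to supply such a lemma; as it stands, the ``feasible-flow condition'' you invoke is not equivalent to the existence of the bispindle.
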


\begin{lemma}\label{lem:sub-edge}
Let $F$ be a digraph and let $u_1, \dots , u_p$ be distinct vertices of $F$.
Suppose that for every outneighbour $v$ of $u_1$, replacing the arc $u_1v$ by a dipath $u_1wv$ of length $2$, where $w\notin V(F)$, always results in the same digraph $F'$.
Suppose that for every given digraph $D$ of order $n$ and $p$ vertices $x_1, \dots, x_p$ in $D$, one can decide in $f(n)$ time
whether there is an $F$-subdivision in $D$ such that $x_i$ is the $u_i$-vertex for every $i$.
Then given a digraph $D$ and $p$ vertices $x_1, \dots , x_p$, one can decide in $O\left({d^+(x_1)-1 \choose d^+(u_1)-1} \sum_{y\in N^+(x_1)} d^+(y) \cdot f(n-1)\right)$ time
whether there is an $F'$-subdivision in $D$ such that $x_i$ is the $u_i$-vertex for every $i$.
\end{lemma}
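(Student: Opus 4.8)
The plan is to reduce the problem of finding an $F'$-subdivision with prescribed original vertices to the assumed algorithm for $F$, run on a digraph with one fewer vertex, after a bounded amount of guessing. The starting point is a reformulation of what an $F'$-subdivision is. Since $F'$ is obtained from $F$ by subdividing a single out-arc $u_1v$ of $u_1$ (inserting a vertex $w$ with $d^-_{F'}(w)=d^+_{F'}(w)=1$ and $u_1w,wv\in A(F')$), the out-degree of $u_1$ is unchanged, so in any $F'$-subdivision the $u_1$-vertex still has $d^+(u_1)$ out-paths, and the one corresponding to the arc $u_1v$ has length at least $2$ while the others are unconstrained. Here the hypothesis that subdividing \emph{any} out-arc of $u_1$ yields the same $F'$ is essential: it makes $F'$ well defined and, more importantly, shows it is irrelevant which out-path we force to be long. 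Hence $D$ has an $F'$-subdivision with $x_i$ as the $u_i$-vertex for all $i$ if and only if $D$ has an $F$-subdivision with the same prescribed vertices in which at least one of the $d^+(u_1)$ out-paths leaving $x_1$ has length at least $2$.

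Next I would build the reduction. In such an $F$-subdivision the long out-path starts $x_1,y_0,z_0,\dots$ with $y_0\in N^+(x_1)$ and $z_0\in N^+(y_0)$, and the remaining $d^+(u_1)-1$ out-paths start at distinct vertices forming a set $Y\subseteq N^+(x_1)\setminus\{y_0\}$ with $|Y|=d^+(u_1)-1$; by internal disjointness $z_0$ and the vertices of $Y$ are pairwise distinct and $y_0$ lies on no other path. I would iterate over all such triples $(y_0,z_0,Y)$ and, for each, construct a digraph $D'$ of order $n-1$: delete $x_1$, let $y_0$ play the role of the $u_1$-vertex, reset its in-neighbourhood to $N^-_D(x_1)\setminus\{y_0\}$ and its out-neighbourhood to exactly $\{z_0\}\cup Y$ (adding the arcs $y_0y_j$ for $y_j\in Y$ when absent), and keep $D-x_1$ otherwise. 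I would then invoke the assumed $f(n)$-time algorithm on $D'$, asking for an $F$-subdivision with $y_0$ as the $u_1$-vertex and $x_i$ as the $u_i$-vertex for $i\ge2$, and answer ``yes'' iff some call succeeds.

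The correctness is a lifting argument, which I expect to be the main obstacle. The forward direction is routine: from a witnessing $F'$-subdivision I read off the correct triple $(y_0,z_0,Y)$; deleting $x_1$ turns the long out-path $x_1,y_0,z_0,\dots$ into a legitimate out-path $y_0,z_0,\dots$ from the new $u_1$-vertex and yields an $F$-subdivision of $D'$. The delicate converse requires splitting $y_0$ back into $x_1$ and an internal vertex $y_0$: the in-paths and the $Y$-out-paths are re-attached to $x_1$ (each added arc $y_0y_j$ reverting to $x_1y_j$, each in-arc of $y_0$ reverting to the genuine arc into $x_1$), while the single out-path through $z_0$ is prefixed by the arc $x_1y_0$ to restore length at least $2$. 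The reason I restrict the out-neighbourhood of $y_0$ to the one genuine arc $y_0z_0$ together with the arcs to $Y$ — equivalently, why I guess $z_0$ rather than retaining all of $N^+_D(y_0)$ — is precisely to guarantee that exactly one out-path actually traverses the vertex $y_0$; otherwise two out-paths of the $F$-subdivision could pass through $y_0$ and the split would violate disjointness. Granting this, the lift is a valid $F'$-subdivision of $D$ with the prescribed vertices.

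Finally, the complexity follows from the enumeration: the pairs $(y_0,z_0)$ number $\sum_{y\in N^+(x_1)}d^+(y)$, the admissible sets $Y$ number $\binom{d^+(x_1)-1}{d^+(u_1)-1}$, each $D'$ is built in $O(n+m)$ time and has order $n-1$, and each call costs $f(n-1)$. Multiplying gives the claimed $O\!\left(\binom{d^+(x_1)-1}{d^+(u_1)-1}\sum_{y\in N^+(x_1)}d^+(y)\cdot f(n-1)\right)$ bound.
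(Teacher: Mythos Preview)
Your proposal is correct and follows the same overall strategy as the paper: guess the first two vertices $y_0,z_0$ of the ``long'' out-path together with the set $Y$ of first vertices on the remaining out-paths, then reduce to the assumed $F$-algorithm on a digraph with one fewer vertex. The enumeration and the complexity count coincide exactly with the paper's.

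The one genuine difference lies in the reduction itself. The paper deletes the middle vertex $y_0$ (their $y_1$), keeps $x_1$ as the $u_1$-vertex, restricts the out-arcs of $x_1$ to $\{x_1y_j : y_j\in Y\}$, and adds the single shortcut arc $x_1z_0$; the correctness argument is then a one-line substitution of $x_1z_0$ by the dipath $x_1y_0z_0$ and back. You instead delete $x_1$ and let $y_0$ impersonate $u_1$, which forces you to rebuild both the in- and the out-neighbourhood of $y_0$ and to argue more carefully that the lift preserves internal disjointness (in particular, that $y_0$ ends up on exactly one path after splitting). Your argument for this is sound, but it is the more delicate of the two; the paper's choice of which vertex to delete sidesteps all of that surgery, since $x_1$ never changes role and the only artificial arc introduced is $x_1z_0$. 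Either reduction works and yields the same bound; the paper's is just the cleaner implementation of the same idea.
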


\begin{proof}
Set $q=d^+(u_1)$.
For every set of $q$ neighbours $y_1, \dots , y_q$ of $x_1$ and every outneighbour $z$ of $y_1$, where $z\notin\{y_2,\dots,y_q\}$, we shall give a procedure that verifies if $D$ contains an $F'$-subdivision
$S'$ such that $x_i$ is the $u_i$-vertex for all $1\leq i\leq p$, and $\{x_1y_1, \dots, x_1y_q, y_1z\}\subseteq A(S')$.
Such an $F'$-subdivision is called {\it forced}.

Let $D'$ be the digraph obtained from $D-y_1$ by deleting all arcs leaving $x_1$ except $x_1y_2, \dots , x_1y_q$, and adding the arc $x_1z$.

\begin{claim}
$D$ has a forced $F'$-subdivision if and only if $D'$ has an
$F$-subdivision such that $x_i$ is the $u_i$-vertex for every $i$.
\end{claim}

\begin{subproof}
Suppose that $S$ is an $F$-subdivision in $D'$ such that $x_i$ is the $u_i$-vertex for all $i$.
Since $x_1$ has outdegree $q$ in $D'$, we have
$\{x_1y_2, \dots , x_1y_q, x_1z\}\subseteq A(S)$.
Let $S'$ be the digraph obtained from $S$ by replacing the arc $x_1z$ by the dipath $x_1y_1z$. Because replacing the arc $u_1v$ by a dipath of length $2$ results in $F'$ for any outneighbour $v$ of $u_1$, the digraph $S'$ is an $F'$-subdivision in $D$.
Thus $S'$ is a forced $F'$-subdivision in $D$.

Conversely, assume that $S'$ is a forced $F'$-subdivision in $D$. Then the digraph $S$ obtained from $S'$ by replacing the dipath $x_1y_1z$ by the arc $x_1z$ is an $F$-subdivision in $D'$ such that $x_i$ is the $u_i$-vertex for every~$i$.
\end{subproof}

This claim implies that deciding whether $D$ contains a forced $F'$-subdivision can be done by checking whether $D'$ has an
$F$-subdivision such that $x_i$ is the $u_i$-vertex for all $i$. This can be done in $f(n-1)$ time by assumption.
By repeating this for every possible set $\{y_1, \dots , y_q, z\}$ where the $y_i$'s are distinct outneighbours of $x_1$ and $z\notin\{y_2,\dots,y_q\}$ is an outneighbour of $y_1$, we obtain an algorithm to decide whether there is an $F'$-subdivision in $D$ such that $x_i$ is the $u_i$-vertex for all $i$.
Since there are at most ${d^+(x_1)-1 \choose d^+(u_1)-1} \sum_{y\in N^+(x_1)} d^+(y)$ such sets, the running time of this algorithm is as claimed.
\end{proof}

\section{Oriented graphs of order $4$}\label{sec:oriented}

The aim of this section is to prove that every oriented graph of order $4$ is tractable.

\begin{theorem}\label{orient-4}
If $F$ is an oriented graph of order $4$, then $F$-\pb\ is polynomial-time solvable. \end{theorem}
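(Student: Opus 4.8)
The plan is to enumerate the oriented graphs of order $4$ and dispatch them using the structural reductions already available.  First I would dispose of the digraphs with no big vertex: since every digraph of order $4$ is planar, Corollary~\ref{cor:nobig} immediately shows that any oriented $F$ on $4$ vertices with no big vertex is tractable.  Thus it suffices to treat oriented graphs $F$ of order $4$ possessing at least one big vertex.  Because $F$ is an oriented graph, $G_F$ has no edges, so Corollary~\ref{NP-2cycle} and the reductions of Section~\ref{sec:hard} never force intractability here; the entire burden is to exhibit polynomial-time algorithms.

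Next I would classify the relevant $F$ by the structure of their big vertices and recognize as many of them as possible as already-handled shapes.  Many oriented graphs of order $4$ decompose as $F_0 + T$ with $T$ a spider, or are obtained by gluing a spider at a vertex for which the prescribed-original-vertex version is solvable, so Lemmas~\ref{lem:union-spider} and~\ref{lem:add-spider} reduce these cases to smaller or already-tractable pieces.  Others are spindles, for which Theorem~\ref{spindle} applies, or bispindles with two forward and one backward path, for which Theorem~\ref{bispindle} applies; a transitive or acyclic $F$ on $4$ vertices falls into one of these families.  For each such $F$ I would simply name the decomposition and cite the corresponding lemma or theorem, keeping the case analysis organized by the out- and in-degrees of the (at most two) big vertices.

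The genuinely hard case, and the one I expect to consume the bulk of the argument, is the wheel $W_3$: it is an oriented graph of order $4$ whose centre is big, and it does not decompose as a disjoint union with a spider, nor is it a spindle or a bispindle of the tractable type.  For $W_3$ I would develop a dedicated algorithm, stated separately as Theorem~\ref{3wheel}, whose validity rests on a Menger-type analysis in the spirit of Lemma~\ref{W2}: after guessing the centre and using the outsection/insection machinery of Section~\ref{sec:known}, one reduces the search for a $W_3$-subdivision to finding prescribed systems of internally disjoint handles and dipaths, each checkable in polynomial time by a Menger algorithm.  The main obstacle is precisely the correctness of this reduction for $W_3$, since the rim $\vec{C}_3$ together with three centre-to-rim dipaths must be realized with all disjointness constraints simultaneously satisfied; the remaining oriented graphs of order $4$ are then routine once $W_3$ is in hand.

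Once every oriented $F$ of order $4$ with a big vertex has been placed into one of these families—spider unions/gluings, spindles, tractable bispindles, or the special case $W_3$—together with the big-vertex-free case handled by Corollary~\ref{cor:nobig}, the theorem follows.
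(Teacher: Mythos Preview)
Your overall architecture matches the paper's: first dispose of $F$ with no big vertex via Corollary~\ref{cor:nobig}, then enumerate the remaining oriented graphs of order $4$ and dispatch them, with $W_3$ singled out for a dedicated algorithm (Theorem~\ref{3wheel}). The paper organizes the enumeration slightly differently---by passing to the converse it assumes a vertex of out-degree $3$ and then splits on $|A(F)|\in\{6,5,\le 4\}$---but that is a cosmetic difference.

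There is, however, a genuine gap. Your assertion that ``a transitive or acyclic $F$ on $4$ vertices falls into one of these families'' (spindles, tractable bispindles, spider unions or gluings) is false, and your claim that ``the remaining oriented graphs of order $4$ are then routine once $W_3$ is in hand'' overlooks real work. With a vertex of out-degree $3$ and $|A(F)|=5$ there are, up to isomorphism, three digraphs (Figure~\ref{fig:K4-e}): the spindle $S(1,2,2)$, the $3$-fan $F_3$, and $Z_4$. Only the first is a spindle. The digraph $Z_4$ (arcs $da,db,dc,ba,bc$) is acyclic but is not a spindle, not a bispindle, and cannot be obtained from a smaller tractable piece by a spider union or gluing at a single vertex; the paper devotes Subsection~\ref{sec:Z} to it, proving a nontrivial Menger-type characterization (Lemma~\ref{equiv-Z}) and deriving an $O(n^4(n+m))$ algorithm (Theorem~\ref{poly-Z}). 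Likewise, when $|A(F)|=6$ the transitive tournament $TT_4$ is not a spindle or bispindle and is handled by citing a specific result of~\cite{BHM} (their Theorem~64), and $F_3$ is handled by citing their Theorem~61. None of these three fits the toolkit you listed, so your case analysis as stated does not close.

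In short: your plan is right in outline, but you must add $Z_4$ (with its own argument, as in Subsection~\ref{sec:Z}) alongside $W_3$ as a case requiring new work, and you should cite the \cite{BHM} results for $TT_4$ and $F_3$ rather than asserting they are spindles or spider-gluings.
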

\begin{proof}
If $F$ has no big vertices, then by Theorem~\ref{thm:DGT}, $F$-\pb\ is polynomial-time solvable.
Henceforth, we assume that $F$ has at least one big vertex. Free to consider its converse, we may assume  that $F$ has a vertex with out-degree  $3$.
Necesssarily, we must be in one the following three cases:
\begin{enumerate}
\item $|A(F)|=6$. Then $F$ is either the transitive tournament $TT_4$, or the wheel $W_3$. Bang-Jensen et al.~\cite{BHM} (Theorem 64) proved that $TT_4$-\pb\ is polynomial-time solvable.
We show in Subsection~\ref{sec:3wheel} that $W_3$ is tractable.

\begin{figure}[htb]
       \centering
       \includegraphics[height=2.8cm]{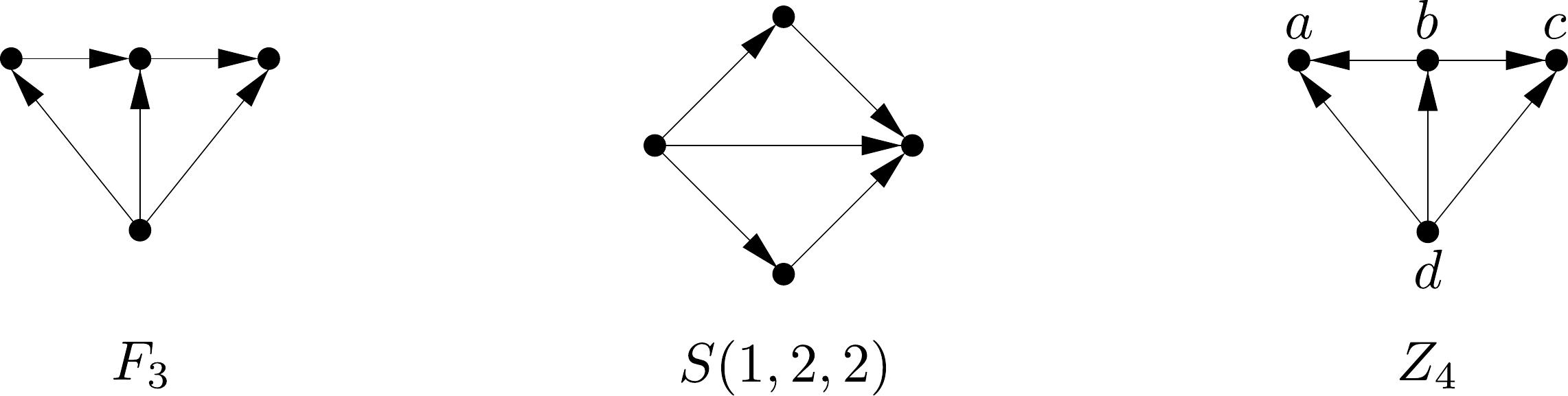}
       \caption{Oriented graphs with $4$ vertices, $5$ arcs, and a vertex of out-degree $3$}
       \label{fig:K4-e}
\end{figure}

\item $|A(F)|=5$.
Then $F$ must be one of the oriented graphs depicted Figure~\ref{fig:K4-e}.
$S(1,2,2)$ is a spindle  and $F_3$ is the $3$-fan. These digraphs have been shown to be tractable in~\cite{BHM} (Proposition 20 and Theorem 61).
We prove in Subsection~\ref{sec:Z} that $Z_4$ is tractable.

\item $|A(F)|\leq 4$. Then $F$ is either a star or a star plus an arc.  Those digraphs have been proved to be tractable in~\cite{BHM}.
\end{enumerate}
\end{proof}

\subsection{Subdivision of the $3$-wheel}\label{sec:3wheel}

\begin{theorem}\label{3wheel}
$W_3$-\pb\ can be solved in $O(n^6(n+m))$ time.
\end{theorem}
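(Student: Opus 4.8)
The plan is to first recast the problem as a question about a single cycle and a connectivity condition. I claim that a digraph $D$ contains a $W_3$-subdivision with a prescribed centre $c'$ if and only if $D-c'$ contains a directed cycle $R$ together with three independent $(c',R)$-dipaths. In any $W_3$-subdivision the three rim original vertices lie on a common directed cycle $R$ (the subdivided rim), the centre $c'$ (which has indegree $0$ in $W_3$) does not lie on $R$, and the three subdivided spokes are exactly three $(c',R)$-dipaths meeting $R$ only at their distinct terminal vertices. Conversely, three \emph{independent} $(c',R)$-dipaths must terminate at three distinct vertices of $R$, since independence forbids a shared vertex other than $c'$; these three vertices cut $R$ into three arcs, and together with the spokes they form a $W_3$-subdivision. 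The point of this reformulation is that it discards all bookkeeping of the cyclic order of the attachment points: by the rotational symmetry of $W_3$ every cyclic order is equivalent, so it suffices to guarantee three \emph{distinct} attachment points on one cycle.

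I would then iterate over the $n$ candidate centres $c'$, and for each one decide whether $D-c'$ has a directed cycle reachable from $c'$ by three independent paths. The naive idea of guessing all three rim vertices and then building the rim must be avoided: a directed $3$-cycle through three \emph{fixed} vertices $v_1',v_2',v_3'$ is precisely a $3$-linkage from $(v_1',v_2',v_3')$ to $(v_2',v_3',v_1')$, which is NP-complete, so pinning all three rim vertices cannot yield a polynomial algorithm. The essential device is to leave one rim vertex free. I would guess only two of the three attachment points, say $v_1'$ and $v_2'$ chosen as consecutive ones along the rim (a factor $O(n^2)$), and keep $v_3'$ unspecified. Writing the rim as a forward arc $v_1'\to v_2'$ and a backward arc $v_2'\to v_1'$, the free vertex $v_3'$ is simply the interior point of the backward arc at which the third spoke attaches. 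Hence, after fixing $c',v_1',v_2'$, the task reduces to finding, pairwise internally disjointly, a directed cycle through $v_1'$ and $v_2'$ (a $(v_1',v_2')$-dipath and a $(v_2',v_1')$-dipath) together with three spokes from $c'$: one to $v_1'$, one to $v_2'$, and one to \emph{some} interior vertex of the backward arc. A directed cycle through two fixed vertices is a $2$-linkage-type object handled by Menger's Theorem, and the third spoke now has a free endpoint, which is exactly what keeps the residual problem within the scope of flow techniques rather than a genuine $3$-linkage.

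To execute the last step I would encode the simultaneous existence of these internally disjoint paths as an independent-paths/flow condition in an auxiliary digraph, using Theorem~\ref{Menger-set} and the handle constructions from Section~\ref{sec:known}; any constant amount of side information that cannot be folded into the flow (for instance, the vertex at which the third spoke meets the backward arc, or the spoke entry points into the relevant strong component of $D-c'$ in which $R$ must live) would be guessed, contributing the remaining polynomial factors so that the total running time is $O(n^6(n+m))$. The main obstacle is precisely this coordination. One must ensure that the rim cycle, the two fixed spokes, and the flexible third spoke can all be realized \emph{simultaneously} vertex-disjointly, and that the one remaining degree of freedom, the location of $v_3'$, can genuinely be absorbed by Menger's Theorem without covertly re-creating a $3$-linkage. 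Designing the auxiliary construction so that it captures disjointness of every part at once, and verifying that the free endpoint of the third spoke can always be placed on the backward rim arc, is the delicate heart of the argument; as with the other algorithms in this paper, the correctness proof will rest on a constructive claim showing that a solution to the flow instance assembles into an honest $W_3$-subdivision and conversely.
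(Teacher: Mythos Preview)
Your reformulation in the first paragraph is correct and coincides with the paper's starting point. The genuine gap is in the second paragraph: you assert that ``a directed cycle through two fixed vertices is a $2$-linkage-type object handled by Menger's Theorem''. It is indeed a $2$-linkage-type object, but precisely for that reason it is \emph{not} handled by Menger: deciding whether a digraph contains a directed cycle through two prescribed vertices $v_1',v_2'$ is equivalent to asking for internally disjoint $(v_1',v_2')$- and $(v_2',v_1')$-dipaths, and this special case of $2$-linkage is already NP-complete (an easy reduction from {\sc Restricted $2$-Linkage}: add fresh vertices $a,b$ and arcs $a\to x_1$, $y_1\to b$, $b\to x_2$, $y_2\to a$). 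So fixing two rim vertices, far from freeing you from linkage difficulties, recreates them. Your own caveat that building a cycle through \emph{three} prescribed vertices is a $3$-linkage was right; the same obstruction already bites at two. No polynomial amount of extra guessing rescues this, because the hardness is in the residual subproblem itself.

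The paper avoids this by never pinning \emph{any} rim vertex. It guesses three different vertices: the entry points $X=\{x_1,x_2,x_3\}$ of the three spokes into the strong component $\Gamma$ of $D-c'$ that will contain the rim. The existence of three independent $(c',X)$-dipaths in $D-(\Gamma\setminus X)$ is a pure Menger question. What remains inside $\Gamma$ is the {\sc Tripod} problem: given a strong digraph and a set $X$ of three vertices, decide whether there is a directed cycle $C$ together with three disjoint $(X,C)$-dipaths. The whole weight of the proof is a dedicated $O(n^2(n+m))$ algorithm for {\sc Tripod} (Lemma~\ref{lem:tripod}): start from an arbitrary cycle of length $\geq 3$, run Menger from $X$ to three of its vertices, and if a $2$-separation is returned, use it either to shrink the instance and recurse or to find a better cycle; three structural claims certify that the reductions preserve the answer. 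Because the cycle is never prescribed but discovered through this recursion, no linkage problem ever arises. The total $O(n^6(n+m))$ comes from $n$ choices of centre, $n^3$ choices of $X$, and the $O(n^2(n+m))$ tripod routine.
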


The proof of this theorem relies on the following notion.
Let $X$ be a set of three vertices.
An {\it $X$-tripod} is a digraph which is the union of a directed cycle $C$ and three disjoint dipaths $P_1, P_2, P_3$ with initial vertices in $X$ and terminal vertices in $C$. If the $P_i$ are $(X,C)$-dipaths, we
say that the tripod is {\it unfolded}. Note that the dipaths $P_i$ may be of length $0$. We shall denote the tripod described above as the $4$-tuple $(C,P_1,P_2,P_3)$.

\begin{proposition}\label{prop:unfold}
Let $X=\{x_1,x_2,x_3\}$ be a set of three distinct vertices.
Any $X$-tripod contains an unfolded $X$-tripod.
\end{proposition}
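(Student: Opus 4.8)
The plan is to start from an arbitrary $X$-tripod $(C,P_1,P_2,P_3)$ and produce an unfolded one by truncating each dipath $P_i$ at the first moment it reaches $C$. The only way a tripod fails to be unfolded is that some $P_i$ is not an $(X,C)$-dipath, i.e.\ some internal vertex of $P_i$ lies in $X\cup V(C)$; I would dispose of the two membership conditions separately.

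First I would record a structural fact that is essentially free. Since the three dipaths are pairwise disjoint and each has its initial vertex in the $3$-element set $X$, their initial vertices are three distinct elements of $X$ and hence exhaust $X$. If an internal vertex of some $P_i$ were a vertex $x\in X$, then $x$ would also be the initial vertex of one of the three dipaths, contradicting either pairwise disjointness or the fact that $P_i$, being a dipath, has distinct vertices. Thus the internal vertices of each $P_i$ already avoid $X$, so the only obstruction to being unfolded is contact with $V(C)$ strictly before the terminal vertex.

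The main step is the truncation. For each $i$, since $t(P_i)\in V(C)$, the set of vertices of $P_i$ lying on $C$ is non-empty; let $y_i$ be the first such vertex as $P_i$ is traversed from $s(P_i)$, and set $P_i'=P_i[s(P_i),y_i]$ (a dipath of length $0$ when $s(P_i)$ already lies on $C$, which the definition permits). By the choice of $y_i$ no internal vertex of $P_i'$ lies on $C$, and by the previous paragraph none lies in $X$; hence each $P_i'$ is an $(X,C)$-dipath. Because $P_i'$ is a subpath of $P_i$ and the $P_i$ are pairwise disjoint, the $P_i'$ are pairwise disjoint as well, and in particular their terminal vertices $y_1,y_2,y_3$ are distinct. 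Consequently $(C,P_1',P_2',P_3')$ is an unfolded $X$-tripod, and it is contained in the original tripod since $C$ is unchanged and each $P_i'\subseteq P_i$.

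I expect no serious obstacle: the argument is bookkeeping built on the disjointness hypothesis. The one point requiring care is to certify that truncation does not collapse two of the dipaths onto a common terminal vertex, which is exactly where pairwise disjointness of the original $P_i$ is used; the degenerate case in which a dipath already starts on $C$ must also be flagged, so that the length-$0$ paths explicitly allowed in the definition are accounted for.
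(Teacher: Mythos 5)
Your proof is correct, and it handles the two genuine points of care (internal vertices meeting $X$ versus meeting $V(C)$, and the length-$0$ degenerate case). The paper itself states Proposition~\ref{prop:unfold} without proof, treating it as immediate; your truncation argument --- cutting each $P_i$ at its first vertex on $C$ and using pairwise disjointness to rule out internal vertices in $X$ --- is exactly the standard justification the authors leave implicit, so it fills the omission rather than diverging from it.
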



We shall consider the following decision problem.

\medskip

\noindent {\sc Tripod}\\
\underline{Input}: A strong digraph $D$ and a set $X$ of three distinct vertices of $D$.\\
\underline{Question}: Does $D$ contain an $X$-tripod?

\begin{lemma}\label{lem:tripod}
{\sc Tripod} can be solved in $O(n^2(n+m))$ time.
\end{lemma}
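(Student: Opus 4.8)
The plan is to solve \textsc{Tripod} by exploiting Proposition~\ref{prop:unfold}, which reduces the search to an \emph{unfolded} $X$-tripod $(C,P_1,P_2,P_3)$ where the $P_i$ are genuine $(X,C)$-dipaths (possibly of length $0$, meaning $x_i$ already lies on $C$). The key structural observation is that an unfolded $X$-tripod is essentially a directed cycle $C$ together with three disjoint dipaths reaching $C$ from the three prescribed vertices, with no internal contact between the $P_i$ and $C$. Thus the entire configuration is built around a single directed cycle, and the natural parameter to guess is the set of three vertices $\{c_1,c_2,c_3\}$ on $C$ where the dipaths $P_1,P_2,P_3$ land (these need not be distinct from the $x_i$, and some may coincide with others only if the corresponding $P_i$ has length $0$). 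Since $D$ is strong, once we have the three landing vertices in the correct cyclic order on $C$, the cycle itself can be reconstituted as a concatenation of three $(c_i,c_{i+1})$-dipaths.

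First I would enumerate over all choices of an ordered triple of \emph{anchor} vertices $(c_1,c_2,c_3)$ — the heads of $P_1,P_2,P_3$ on the cycle — but this naive enumeration costs $O(n^3)$ guesses, which is too many for the target $O(n^2(n+m))$ bound. So the sharper plan is to avoid guessing all three anchors independently. Instead I would observe that the three dipaths $P_1,P_2,P_3$ from $X$ to $C$, together with the three arcs of $C$ joining consecutive anchors, form a system of disjoint dipaths that can be detected by a single Menger-type computation once we fix the cyclic arrangement. Concretely, I would fix the cyclic order in which $x_1,x_2,x_3$ attach to $C$ (there are only two cyclic orders up to rotation, hence $O(1)$ choices), and then set up an auxiliary digraph in which finding the tripod becomes a disjoint-paths problem linking each $x_i$ forward around the cycle to the next attachment point. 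Using the Menger machinery developed in Section~\ref{sec:known} for $(X,Y)$-dipaths and independent paths, each such test runs in linear time $O(n+m)$.

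The running time analysis is then the crux of hitting $O(n^2(n+m))$: I expect to guess two of the three structural parameters — for instance a single anchor vertex on $C$ plus one more branching point — giving $O(n^2)$ iterations, each resolved by a linear-time Menger algorithm, for a total of $O(n^2(n+m))$. The main obstacle I anticipate is precisely controlling this enumeration so that it stays quadratic rather than cubic: the tripod has three ``free'' attachment points, and a careless reduction would guess all three. The resolution should come from noticing that once two anchors and the cyclic order are fixed, the third dipath and the closing arc of the cycle are forced to be whatever disjoint routing the Menger algorithm returns, so the third anchor need not be guessed explicitly — it is discovered by the path search. A secondary subtlety is the degenerate cases where some $P_i$ has length $0$ (so $x_i \in V(C)$) or where two of the $x_i$ want to attach at the same vertex of $C$; these must be folded into the case analysis so that no valid tripod is missed and no spurious one is reported, but they do not affect the asymptotic complexity. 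The correctness in every branch is underwritten by Proposition~\ref{prop:unfold}, which guarantees that if any $X$-tripod exists then an unfolded one does, so that restricting attention to the disjoint-path formulation loses no solutions.
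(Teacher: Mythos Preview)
Your proposal has a genuine gap at exactly the point you flag as ``the crux'': the claim that after guessing two anchors (and a cyclic order) the remainder of the tripod can be recovered by a single Menger computation. Once two landing points $c_1,c_2$ are fixed, the residual task is still to find (i) a directed cycle through $c_1$ and $c_2$, (ii) a third landing point $c_3$ on that cycle, and (iii) three disjoint $(X,\{c_1,c_2,c_3\})$-dipaths that are internally disjoint from the cycle. Item (i) alone already asks for internally disjoint $(c_1,c_2)$- and $(c_2,c_1)$-dipaths with a \emph{prescribed pairing} of endpoints, and coupling it with (iii) makes this a genuine linkage problem, not a flow problem. A Menger algorithm returns $k$ disjoint $(A,B)$-dipaths without any control over which source reaches which sink; the structure you need requires precisely that control, plus the additional constraint that two of the paths close up into a cycle. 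Your sentence ``the third anchor need not be guessed explicitly --- it is discovered by the path search'' is the step that fails: a Menger search does not look for cycles, and you have not specified any auxiliary digraph in which the tripod decomposes into an \emph{unordered} system of disjoint paths. (Note that even guessing all three anchors would not obviously help, for the same reason: finding a directed cycle through three prescribed vertices, disjoint from three further prescribed paths, is not a Menger instance.)

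The paper's algorithm is of a completely different nature and does not enumerate anchors at all. It finds \emph{some} directed cycle $C$ of length at least $3$, picks any three vertices $Y\subseteq V(C)$, and runs a Menger algorithm from $X$ to $Y$. If three disjoint $(X,Y)$-dipaths are returned, they form a tripod together with $C$. If not, Menger returns a $2$-separation $(W,S,Z)$ of $(X,Y)$, and the algorithm exploits this: depending on $|S|$ and on whether there are arcs from $Z$ back to $W$, it either replaces the far side $Z$ by one or two shortcut arcs (producing a strictly smaller equivalent instance on which it recurses) or finds a new cycle through the separator, picks a new $Y$, and re-runs Menger with a strictly smaller $W$. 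The correctness of each reduction is argued via Proposition~\ref{prop:unfold}. The accounting gives at most $n$ recursive reductions, each costing at most $n$ Menger calls, for the stated $O(n^2(n+m))$ bound. The idea you are missing is that \emph{the failure mode of Menger is what drives progress}: the $2$-separation it outputs is structural information that lets you shrink the instance, whereas an enumeration over guessed anchors gives you no such leverage when the guess is wrong.
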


\begin{proof}
Let us describe a procedure {\tt tripod}$(D,X)$, solving {\sc Tripod}.

We first look for a directed cycle of length at least $3$ in $D$. This can be done in linear time.
If there is no such cycle, then we return `no'.

Otherwise we have a directed cycle $C$ of length at least $3$.
We choose a set $Y$ of three vertices in $C$ and run a Menger algorithm between $X$ and $Y$.
If such an algorithm finds three disjoint $(X,Y)$-dipaths $P_1, P_2,P_3$, then we return the tripod $(C, P_1, P_2, P_3)$.
Otherwise, the Menger algorithm finds a $2$-separation $(W,S,Z)$ of $(X,Y)$.
Note that $|S|\geq 1$ because $D$ is strong.

Assume first that $|S|=1$, say $S=\{s\}$.
Let $D_1$ be the digraph obtained from $D[W\cup S]$ by adding the arc $sw$ for every vertex $w$ in $W$ having an inneighbour $z\in Z$. We then make a recursive call to {\tt tripod}$(D_1,X)$.
This is valid by virtue of the following claim.

\begin{claim}\label{claimD1}
There is an $X$-tripod in $D$ if and only if there is an $X$-tripod in $D_1$.
\end{claim}

\begin{subproof}
Suppose first that there is an $X$-tripod in $D_1$. Then $D_1$ contains an unfolded $X$-tripod $T_1$ by Proposition~\ref{prop:unfold}.
If $T_1$ is contained in $D$, then we are done. So we may assume that it is not.
Then $T_1$ contains an arc $sw \in A(D_1)\setminus A(D)$. It can contain only one such arc since every vertex has outdegree at most one in $T_1$ and all such arcs leave $s$.
Furthermore, the head $w$ of this arc is in $W$ and $w$ has an inneighbour $z$ in $Z$.
Now, since $D$ is strong, there is an $(s,z)$-dipath $Q$ in $D$. Because there is no arc from $W$ to $Z$, all internal vertices of $Q$ are in $Z$. 
Hence the digraph $T$ obtained from $T_1$ by replacing the arc $sw$ by the dipath $Qzw$ is an $X$-tripod in $D$.

Suppose now that $D$ contains an $X$-tripod. Then it contains an unfolded $X$-tripod
$T=(C_1, P_1,\allowbreak P_2, P_3)$ by Proposition~\ref{prop:unfold}.
Since all $(X,Z)$-dipaths in $D$ go through $s$, the terminal vertices of the $P_i$ are in $W\cup S$, and $D[Z]\cap T$ is a dipath $Q$ which is a subpath of one of the $P_i$ or $C_1$.
If $Q$ is a $(t,z)$-dipath, then $T$ contains arcs $st$ and $zw$ for some $w\in W$.
Then the digraph $T_1$ obtained from $T$ by replacing $sQw$ by the arc $sw$ is an $X$-tripod in $D_1$.
\end{subproof}

Assume now that $|S|=2$, say $S=\{s_1,s_2\}$.
If there is no arc from $Z$ to $W$, let $D_2$ be the digraph obtained from $D[W\cup S]$ by adding the arc $s_1s_2$ (resp.\ $s_2s_1$) (if the arc is not already present in $D$) if there is an $(s_1,s_2)$-dipath (resp.\ $(s_2,s_1)$-dipath) in $D[Z\cup S]$.
We then make a recursive call to {\tt tripod}$(D_2,X)$.
This is valid by virtue of the following claim.

\begin{claim}\label{claimD2}
There is an $X$-tripod in $D$ if and only if there is an $X$-tripod in $D_2$.
\end{claim}

\begin{subproof}
Suppose first that there exists an $X$-tripod in $D_2$. Then there is an unfolded $X$-tripod $T_2$ in $D_2$, by Proposition~\ref{prop:unfold}.
Then either it is an $X$-tripod in $D$, or $T_2$ contains exactly one of the arcs $s_1s_2, s_2s_1$ and
this arc is not in $A(D)$. Without loss of generality, we may assume that this arc is $s_1s_2$.
Since $s_1s_2\in A(D_2)\setminus A(D)$, there is an $(s_1,s_2)$-dipath $Q$ in $D[Z\cup S]$.
Hence the digraph $T$ obtained from $T_2$ by replacing the arc $s_1s_2$ by the dipath $Q$ is an $X$-tripod in $D$.

Suppose now that $D$ contains an $X$-tripod. Then it contains an unfolded $X$-tripod
$T=(C_2, P_1, \allowbreak P_2, P_3)$ by Proposition~\ref{prop:unfold}. For $i=1,2,3$, let $y_i$ be the terminal vertex of $P_i$.
Without loss of generality, we may assume that $y_1,y_2,y_3$ appear in this order along $C_2$.
Since all $(X,Z\cup S)$-dipaths intersect $S$, one of the $y_i$, say $y_3$, must be in $W$.
The three oriented paths $P_2$, $P_1C_2[y_1,y_2]$, and $\overline{C_2}[y_3y_2]$ are independent $(W,y_2)$-paths. But the graph underlying $D$ has no edges between $W$ and $Z$, by the assumption made in the current subcase.
So $y_2$ is in $W\cup S$. Similarly, $y_1$ is in $W\cup S$.
It follows that $T\cap D[Z]$ is a dipath $Q$ which is a subpath of one of the $P_i$ or $C_2$.
Moreover, the inneighbour in $T$ of the initial vertex of $Q$ is some vertex $s\in S$ (because there is no arc from $W$ to $Z$) and the outneighbour in $T$ of the terminal vertex of $Q$ is some vertex $s'\in S$ because there is no arc from $Z$ to $W$). Furthermore $s\neq s'$ for otherwise $sQs' =C_2$ which is impossible as since $y_3\in W\cap C_2$.
Moreover, because $sQs'$ is an $(s,s')$-dipath in $D[Z\cup S]$, $ss'$ is an arc in $D_2$.
Thus the digraph $T_2$ obtained from $T$ by replacing $sQs'$ by the arc $ss'$ is an $X$-tripod in $D_2$.
\end{subproof}

Now we may assume that there is an arc $z_1w_1$ with $z_1\in Z$ and $w_1\in W$.
Since $D$ is strong, there is a cycle $C'$ containing the arc $z_1w_1$.
Necessarily, the cycle $C'$ must go through $S$ and it contains at least three vertices.

\medskip

\noindent
\underline{Case 1}: $S\subset V(C')$. Set $Y' =\{w_1, s_1,s_2\}$.
We run a Menger algorithm between $X$ and $Y'$.
If such an algorithm finds three disjoint $(X,Y')$-dipaths $P'_1,P'_2,P'_3$, then we return the $X$-tripod $(C', P'_1, P'_2, P'_3)$.

If not, we obtain a $2$-separation $(W',S',Z')$ of $(X,Y')$.
We claim that $|W'| < |W|$. Indeed, no vertex $z\in Z$ is in $W'$ because every $(X,z)$-dipath must go through $S$ and thus through $S'$. Hence $W'\subseteq W\setminus \{w_1\}$.
Now, we replace $C$ by $C'$, $Y$ by $Y'$ and $(W, S, Z)$ by $(W',S',Z')$, and then redo the procedure.

\medskip

\noindent \underline{Case 2}: $|S \cap V(C')|=1$. Without loss of generality, we may assume $S \cap V(C')=\{s_1\}$.
Set $Y' =\{w_1, s_1,z_1\}$. As in Case 1, we run a Menger algorithm between $X$ and $Y'$. If such an algorithm finds three disjoint $(X,Y')$-dipaths $P'_1,P'_2,P'_3$, then we return the $X$-tripod $(C', P'_1, P'_2, P'_3)$.

If not, the Menger algorithm returns a $2$-separation $(W',S',Z')$ for $(X, Y')$.
Observe that there is a vertex $s'_1\in S' \cap W$ because $w_1$ is reachable from $X$ in $D[W]$.
If $S'$ contains a vertex $s'_2$ in $Z$, then one can see that there are no $(X,Y')$-dipaths in $D-\{s'_1, s_2\}$. Thus, there is a $2$-separation $(W'',S'',Z'')$ of $(X, Y')$ where $S''\subseteq \{s'_1, s_2\}$ and $s_1\in Z''$.
Hence, after possibly replacing the $2$-separation $(W',S',Z')$ by $(W'', S'' , Z'')$, we may assume that $S'\subset W\cup S$.

If $|W'| < |W|$, then we set $C:=C'$, $Y:=Y'$, $(W, S, Z) :=(W',S',Z')$, and redo the procedure.

If not, then the set $R=Z \cap W'$ is not empty.
Set $L=Z\setminus R=Z\cap Z'$. There is no arc from $R$ to $L$, because $(W',S',Z')$ is a $2$-separation.
Moreover, all $(X,R)$-dipaths must go through $s_2$. In particular, $s_2\in W'$.
Let $D_3$ be the digraph obtained from $D-L$ by adding an arc $s_1w$ for every $w\in W$ having an inneighbour in $L$.
We then make a recursive call to {\tt tripod}$(D_3,X)$.
This is valid by virtue of the following claim.

\begin{claim}\label{claimD3}
There is an $X$-tripod in $D$ if and only if there is an $X$-tripod in $D_3$.
\end{claim}

\begin{subproof}
Suppose first that $D_3$ contains an $X$-tripod. Then it contains an unfolded $X$-tripod $T_3$ by Proposition~\ref{prop:unfold}.
If $T_3$ is contained in $D$, then we are done. So we may assume that $T_3$ is not contained in $D$.
Then $T_3$ contains an arc in $s_1w\in A(D_3)\setminus A(D)$. It contains only one such arc since every vertex has outdegree at most one in $T_3$ and all arcs of $A(D_3)\setminus A(D)$ leave $s_1$.
Furthermore the head $w$ of this arc is in $W$ and has an inneighbour $z\in L$.
Since $D$ is strong, there is an $(s_1,z)$-dipath $Q$ in $D$.
Moreover since $s_2\in W'$ all the $(s_2, z)$-dipaths must go through $S'$.
But $S' \subseteq W\cup \{s_1\}$, so all $(s_2, z)$-dipaths must go through $s_1$.
Thus $Q$ does not go through $s_2$. It follows that all internal vertices of $Q$ are in $Z$, because
$(W,S,Z)$ is a $2$-separation, and so in $L$ because there is no arc from $R$ to $L$.
Consequently, the digraph $T$ obtained from $T_3$ by replacing the arc $s_1w$ by the dipath $Qzw$ is an $X$-tripod in $D$.

Suppose now that $D$ contains an $X$-tripod. Then it contains an unfolded $X$-tripod
$T=(C_3, P_1,\allowbreak P_2, P_3)$ by Proposition~\ref{prop:unfold}. For $i=1,2,3$, let $y_i$ be the terminal vertex of $P_i$.
Without loss of generality, we may assume that $y_1,y_2,y_3$ appear in this order along $C_3$.
If $T$ is contained in $D-L$, then it is an $X$-tripod in $D_3$.
Hence we may assume that $T$ contains some vertices of $L$.
Observe that the arcs entering $L$ all leave $s_1$.
Hence, $y_i$ cannot be in $L$, since there are two $(X, y_i)$-dipaths in $T$, which are disjoint except for the common vertex $y_i$.
Consequently, the intersection of $T$ with $D[L]$ is a dipath $Q$ which is a subpath of one of the $P_i$ or $C_3$.
Moreover, the inneighbour in $T$ of the initial vertex of $Q$ is $s_1$ and the outneighbour in $T$ of the terminal vertex of $Q$ is some vertex $w\in W\cup \{s_1\}$, because there is no arc from $L$ to $R\cup\{s_2\}$.
But $w\neq s_1$ for otherwise $s_1Qs_1$ would be $C_3$ and would contain at most one of the $y_i$, a contradiction.
Thus the digraph $T_3$ obtained from $T$ by replacing $s_1Qw$ by the arc $s_1w$ is an $X$-tripod in $D_3$.
\end{subproof}

Claims~\ref{claimD1}, \ref{claimD2} and \ref{claimD3} ensure that our algorithm is correct. Each time we do a recursive call, the number of vertices decreases.
So we do at most $n$ of them.
Between two recursive calls, we first find a cycle of length at least $3$ in linear time, and next run a sequence of Menger algorithms to produce a new $2$-separation. At each step the size of the set $W$ decreases. Therefore, we run at most $n$ times the Menger algorithm between two recursive calls. Since a Menger algorithm runs in linear time, the time between two calls is at most $O(n(n+m))$ and so {\tt tripod} runs in $O(n^2(n+m)$ time.
\end{proof}

We now deduce Theorem~\ref{3wheel} from Lemma~\ref{lem:tripod}.

\begin{proof}[Proof of Theorem~\ref{3wheel}]
For every vertex $v$, we examine whether there is a $W_3$-subdivision with centre $v$ in $D$.
Observe that such a subdivision $S$ is the union of a directed cycle $C$, and three internally disjoint $(v,C)$-dipaths $P_1$, $P_2$, $P_3$ with distinct terminal vertices $y_1,y_2, y_3$. The cycle $C$ is contained in some strong component $\Gamma$ of $D-v$. For $i=1,2,3$, let $x_i$ be the first vertex of $P_i$ that belongs to $\Gamma$. Set $X=\{x_1,x_2,x_3\}$.
Then the paths $P_i[x_i,y_i]$, $i=1,2,3$, and $C$ form an $X$-tripod in $\Gamma$,
and the $P_i[v,x_i]$, $i=1,2,3$, are internally disjoint $(v,X)$-dipaths in $D-(\Gamma\setminus X)$.

Hence for finding a $W_3$-subdivision with centre $v$, we use the following procedure to check whether there is a set $X$ as above.
First, we compute the strong components of $D-v$.
Next, for every subset $X$ of three vertices in the same strong component $\Gamma$, we run a Menger algorithm
to check whether there are three independent $(v,X)$-dipaths
in $D-(\Gamma \setminus X)$. If yes, we check whether there is an $X$-tripod in $\Gamma$. If yes again, then we clearly have a $W_3$-subdivision with centre $v$, and we return `yes'. If not, there is no such subdivision, and we proceed to the next triple.

For each vertex $v$, there are at most $n^3$ possible triples. And for each triple we run a Menger algorithm
in time $O(n+m)$ and possibly {\tt tripod} in time $O(n^2(n+m))$. Hence the time spent on each vertex $v$ is
$O(n^5(n+m))$. As we examine at most $n$ vertices, the algorithm runs in $O(n^6(n+m))$ time.
\end{proof}

\subsection{$Z_4$-subdivision}\label{sec:Z}

In this subsection, we show that $Z_4$ is tractable. The proof relies on the following lemma.

\begin{lemma}\label{equiv-Z}
Let $D$ be a digraph.
There is a $Z_4$-subdivision in $D$ if and only if there exists four distinct vertices $a'$, $b'$, $c'$ and $d'$ in $D$ such that the following hold.
\begin{itemize}
\item[(i)] There are three independent $(d',\{a',b',c'\})$-dipaths.
\item[(ii)] There are two independent $(b', \{a',c'\})$-dipaths.
\end{itemize}
\end{lemma}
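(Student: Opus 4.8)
The plan is to prove the two implications separately; the forward implication is immediate and essentially all the work is in the converse.

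\textbf{Necessity.} Suppose $D$ contains a $Z_4$-subdivision $S$, and recall that $Z_4$ has arcs $da,db,dc,ba,bc$, so that $d$ has out-degree $3$ and $b$ has out-degree $2$. Take $a',b',c',d'$ to be the original vertices of $S$. The three subdivided arcs $da,db,dc$ yield dipaths out of $d'$ that pairwise meet only at $d'$, so they are the three independent $(d',\{a',b',c'\})$-dipaths required in (i); likewise the subdivided arcs $ba,bc$ give the two independent $(b',\{a',c'\})$-dipaths of (ii).

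\textbf{Sufficiency.} Now assume (i) and (ii) hold for some quadruple $a',b',c',d'$, witnessed by independent dipaths $R_a,R_b,R_c$ from $d'$ (with $R_x$ ending at $x$) and independent dipaths $Q_a,Q_c$ from $b'$. The crucial point is that we are only asked to produce \emph{some} $Z_4$-subdivision, whose original vertices need not be $a',b',c',d'$; this freedom is exactly what makes the two connectivity conditions sufficient, in the same spirit as the unfolding of tripods in Proposition~\ref{prop:unfold}. Writing $T=R_a\cup R_b\cup R_c$ for the out-tree rooted at $d'$ with leaves $a',b',c'$, I would (after possibly replacing the witnessing paths by ones of minimum total length, which suppresses pathological self-intersections) truncate each of $Q_a$ and $Q_c$ at its first return to $T$: let $\alpha$ be the first vertex of $Q_a$ after $b'$ lying on $T$, and $\gamma$ the corresponding first vertex of $Q_c$. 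Such vertices exist since $a',c'\in T$, and by independence the truncated paths $Q_a[b',\alpha]$ and $Q_c[b',\gamma]$ meet only at $b'$ and are internally disjoint from $T$.

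In the generic situation, where $\alpha$ lies on the branch $R_a$ and $\gamma$ on the branch $R_c$, I would assemble the subdivision by taking $d'$ as the $d$-vertex, $b'$ as the $b$-vertex, $\alpha$ as the $a$-vertex and $\gamma$ as the $c$-vertex: the five dipaths $R_a[d',\alpha]$, $R_b$, $R_c[d',\gamma]$, $Q_a[b',\alpha]$, $Q_c[b',\gamma]$ are then readily checked to be pairwise internally disjoint, with $\alpha$ and $\gamma$ each receiving two in-paths, so their union is a $Z_4$-subdivision. \textbf{The main obstacle} is the case analysis forced by degenerate positions of the landing vertices: when $\alpha$ or $\gamma$ lands on the $d'$--$b'$ branch $R_b$ (creating a directed cycle through $b'$ that must be reorganised into the hub of the subdivision), or when $\alpha$ and $\gamma$ land on the same branch of $T$ (so that one tree-path would run through the other chosen original vertex). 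In each such configuration one reroutes, typically by promoting a first-intersection vertex to the role of the $d$- or $b$-vertex and re-splitting the affected branch, and then re-verifies internal disjointness; minimality of the total length keeps this analysis finite by excluding the more tangled overlaps. Carrying out these cases is where the bulk of the proof lies.
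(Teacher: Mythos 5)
Your necessity argument and your assembly in the generic case (where $Q_a$ and $Q_c$ first return to the tree $T$ on the branches $R_a$ and $R_c$) are correct, and your overall strategy --- an extremal choice of witnessing paths plus a case analysis on where the truncated paths land --- is in the same spirit as the paper's proof. But there is a genuine gap: the degenerate cases, which you explicitly defer (``carrying out these cases is where the bulk of the proof lies''), are precisely where all the content of the lemma sits, and your sketch of how to handle them would not go through as stated. Concretely, if $Q_a$ first returns to $T$ at an internal vertex $\alpha$ of the branch $R_b$, no $Z_4$-subdivision can be assembled from the paths at hand: the natural hub assignment ($d$-vertex $d'$, $b$-vertex $b'$, sinks $\alpha$ and $\gamma$) would need a third $(d',b')$-dipath that the configuration simply does not contain. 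One is forced to change the quadruple --- e.g.\ promote $\alpha$ to the role of the $b$-vertex, witnessed by the new paths $Q_a[\alpha,a']$ and $R_b[\alpha,b']Q_c$ --- rather than merely ``re-split the affected branch and re-verify disjointness''. This is exactly where your extremal setup breaks down: you minimize total length only over paths for the \emph{fixed} quadruple $a',b',c',d'$, so minimality says nothing about the strictly shorter configurations that arise for the \emph{new} quadruple, and your assertion that ``minimality of the total length keeps this analysis finite'' is unsupported as stated.

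The paper avoids this by making the extremal choice global: it minimizes the total length over all choices of the four vertices \emph{and} the five dipaths simultaneously, and then proves (Claims~\ref{Z1}, \ref{Z2} and~\ref{Z3}) that in a minimal configuration every bad intersection pattern is outright impossible, because each such pattern yields a different quadruple with strictly smaller total length; there is then no generic/degenerate split at all, and the union of the five paths is itself the $Z_4$-subdivision. Note also that these reroutings are not routine verifications: Claim~\ref{Z2} must take the \emph{last} intersection along $P_2$ (not the first) so that the concatenated path $P_2[b'',b']Q_2$ is a dipath independent of $Q_1[b'',a']$, and Claim~\ref{Z3} splits into subcases, one of which relies on Claim~\ref{Z2} having been established beforehand. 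If you replace your fixed-quadruple minimization by the global one (or, equivalently, run an induction on the total length of a witnessing configuration) and then actually carry out the analysis for landings on $R_b$, landings at $d'$, and both paths landing on the same branch, your argument becomes a proof; as written, it establishes only the easy direction of the equivalence together with the easiest case of the hard direction.
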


\begin{proof}
If $D$ contains a $Z_4$-subdivision $S$, then the vertices $a', b',c',d'$ corresponding to $a,b,c,d$ (as indicated on Figure~\ref{fig:K4-e}) clearly satisfy conditions (i) and (ii).

Conversely, suppose that $D$ contains four vertices $a', b',c',d'$ satisfying conditions (i) and (ii).
Let $P_1, P_2, P_3$ be three independent $(d',\{a',b',c'\})$-dipaths with $t(P_1)=a'$, $t(P_2)=b'$ and $t(P_3)=c'$;
let $Q_1,Q_2$ be two independent $(b', \{a',c'\})$-dipaths with $t(Q_1)=a'$ and $t(Q_2)=c'$.

We consider such vertices $a',b',c',d'$ and dipaths such that the sum of the lengths of $P_1$, $P_2$, $P_3$, $Q_1$ and
$Q_2$ is minimized.

\begin{claim}\label{Z1} $V(Q_1)\cap V(P_1)=\{a'\}$ and $V(Q_2)\cap V(P_3)=\{c'\}$.
\end{claim}

\begin{subproof}
Suppose $V(Q_1)\cap V(P_1)\neq \{a'\}$. Then there is a vertex $a''$ distinct from $a'$ in $V(Q_1)\cap V(P_1)$.
The vertices $a'', b',c', d'$ satisfy condition (i) with $P_1[d',a'']$, $P_2$, $P_3$
and condition (ii) with $Q_1[b',a'']$, $Q_2$. This contradicts our choice of $a',b',c',d'$ and the corresponding paths, and so $V(Q_1)\cap V(P_1)=\{a'\}$.

The conclusion that $V(Q_2)\cap V(P_3)=\{c'\}$ is proved in the same way; the details are omitted.
\end{subproof}

\begin{claim}\label{Z2}
$(V(Q_1)\cup V(Q_2))\cap V(P_2)=\{b'\}$.
\end{claim}

\begin{subproof}
Suppose not. Then let $b''$ be the last vertex distinct from $b'$ along $P_2$ which is in
$V(Q_1)\cup V(Q_2)$. By symmetry, we may assume that $b''\in V(Q_1)$.
But the four vertices $a', b'',c', d'$ satisfy condition (i) with $P_1$, $P_2[d',b'']$, $P_3$
and condition (ii) with $Q_1[b'',a']$, $P_2[b'',b']Q_2$. This contradicts our choice of $a',b',c',d'$ and proves our claim.
\end{subproof}

\begin{claim}\label{Z3}
$V(Q_1)\cap V(P_3)=\emptyset$ and $V(Q_2)\cap V(P_1)=\emptyset$.
\end{claim}

\begin{subproof}
Suppose not.
Then $V(Q_1)\cap V(P_3)$ or $V(Q_2)\cap V(P_1)$ is not empty.

Assume first that these two sets are both non-empty.
Let $a''$ be a vertex in $V(Q_2)\cap V(P_1)$ and $c''$ be a vertex in $V(Q_1)\cap V(P_3)$.
Then the four vertices $a'', b',c'', d'$ satisfy condition (i) with $P_1[d',a'']$, $P_3[d',c'']$, $P_2$
and condition (ii) with $Q_2[b',a'']$, $Q_1[b',c'']$. This contradicts our choice of $a',b',c',d'$.

Hence, exactly one of the two sets is empty. By symmetry, we may assume that $V(Q_1)\cap V(P_3)\neq \emptyset$,
Let $b''$ be a vertex in $V(Q_1)\cap V(P_3)$.
Now the four vertices $a', b'',c', d'$ satisfy condition (i) with $P_1$, $P_3[d',b'']$, $P_2Q_2$
and condition (ii) with $Q_1[b'',a']$, $P_3[b'',c']$. This contradicts our choice of $a',b',c',d'$ and proves our claim.
\end{subproof}

Claims~\ref{Z1}, \ref{Z2} and \ref{Z3} imply that
$P_1\cup P_2\cup P_3\cup Q_1\cup Q_2$ is a $Z_4$-subdivision.
\end{proof}

\begin{theorem}\label{poly-Z}
$Z_4$-\pb\ can be solved in $O(n^4(n+m))$ time.
\end{theorem}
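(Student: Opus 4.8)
The plan is to reduce the problem of finding a $Z_4$-subdivision to the characterization provided by Lemma~\ref{equiv-Z}, and then to verify the two connectivity conditions (i) and (ii) efficiently for every candidate quadruple of original vertices. By Lemma~\ref{equiv-Z}, $D$ contains a $Z_4$-subdivision if and only if there exist four distinct vertices $a',b',c',d'$ such that there are three independent $(d',\{a',b',c'\})$-dipaths and two independent $(b',\{a',c'\})$-dipaths. So the algorithm will iterate over candidate choices of the original vertices and, for each choice, test these two conditions using the Menger machinery described in Subsection~\ref{sec:known}.

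First I would fix $d'$ and $b'$, and the set $\{a',c'\}$ as targets. The subtlety is that conditions (i) and (ii) refer to the \emph{same} vertices $a',b',c',d'$, but the dipaths of (i) and those of (ii) are allowed to share vertices (indeed the proof of Lemma~\ref{equiv-Z} shows how to extract a genuine subdivision from overlapping systems, using the minimality argument). Hence (i) and (ii) can be tested \emph{independently}: I do not need vertex-disjointness between the $P_i$ and the $Q_j$. For condition (ii), with $b'$ fixed and target set $\{a',c'\}$, I would apply the independent-paths version of Menger's Theorem (Theorem~\ref{Menger-set}(ii)) from the single source $b'$ to the $2$-element target set $\{a',c'\}$, which runs in linear time. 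For condition (i), with $d'$ fixed and target set $\{a',b',c'\}$, I would likewise apply Theorem~\ref{Menger-set}(ii) from $d'$ to the $3$-element set $\{a',b',c'\}$, again in linear time. The only care needed is to ensure the Menger algorithm returns three (resp.\ two) dipaths with \emph{distinct} terminal vertices hitting each target exactly once; the auxiliary-digraph construction at the end of Subsection~\ref{sec:known}, where one adds target-specific vertices with prescribed multiplicities $k_i$, handles precisely this requirement.

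The counting of candidates drives the complexity. The vertex $a'$ and $c'$ are interchangeable in both conditions (condition (i) is symmetric in $a',b',c'$ as targets, and condition (ii) is symmetric in $a',c'$), so effectively I must choose the ordered pair $(b',d')$ and the unordered pair $\{a',c'\}$; this is $O(n^4)$ quadruples. For each quadruple I run a constant number of Menger algorithms, each in $O(n+m)$ time. This yields the claimed $O(n^4(n+m))$ running time. The validity is immediate from Lemma~\ref{equiv-Z}: a `yes' answer on some quadruple produces the path systems whose existence is equivalent to a $Z_4$-subdivision, and conversely any $Z_4$-subdivision designates such a quadruple.

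The main obstacle I anticipate is bookkeeping the independence and distinct-endpoint conditions in the Menger calls so that the paths found really instantiate the hypotheses of Lemma~\ref{equiv-Z}, rather than merely establishing the existence of some disjoint paths to the target set as a whole. Concretely, condition (i) requires each of $a',b',c'$ to be the terminal vertex of a distinct dipath from $d'$, and condition (ii) requires each of $a',c'$ to be the terminal vertex of a distinct dipath from $b'$; a naive $(x,Y)$-Menger query only guarantees the right \emph{number} of independent dipaths into $Y$, so I must use the refined auxiliary construction (creating one new vertex per target with capacity one, all fed from the in-neighbourhoods of the respective $a',b',c'$) to force the terminal vertices to be exactly the prescribed ones. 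Once that construction is in place, correctness follows directly from Lemma~\ref{equiv-Z} and the linear-time guarantee of the Menger algorithm, and no further case analysis is required.
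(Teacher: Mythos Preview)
Your proposal is correct and follows essentially the same approach as the paper: invoke Lemma~\ref{equiv-Z}, enumerate the $O(n^4)$ quadruples $(a',b',c',d')$, and test conditions (i) and (ii) with two Menger calls in linear time each. One minor remark: your ``main obstacle'' about forcing distinct terminal vertices is in fact a non-issue, since by the paper's definition two $(x,Y)$-dipaths are \emph{independent} only if they meet in $x$ alone, so three independent $(d',\{a',b',c'\})$-dipaths automatically terminate at three distinct vertices of the target set; the plain Menger call of Theorem~\ref{Menger-set}(ii) therefore suffices without the refined auxiliary construction.
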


\begin{proof}
By Lemma~\ref{equiv-Z}, $Z_4$-\pb\ is equivalent to deciding whether there are four vertices
satisfying the condition (i) and (ii) of the lemma.
But given four vertices $a', b',c',d'$, one can check in linear time if conditions (i) and (ii) hold by running two Menger algorithms.
Since there are $O(n^4)$ sets of four vertices in $D$, $Z_4$-\pb\ can be solved in $O(n^4(n+m))$ time.
\end{proof}

\section{Some tractable digraphs}\label{sec:easy}

\subsection{Easier cases}\label{subsec:easier}

A {\it symmetric star} is a symmetric digraph associated to a star. The {\it centre} of a symmetric star is the centre of the star to which it is associated.
A {\it superstar} is a digraph obtained from a symmetric star by adding an arc joining two non-central vertices. The {\it centre} of a superstar is the centre of the star from which it is derived.
The symmetric star of order $k+1$ is denoted by $SS_k$ and the superstar of order $k+1$ is denoted by $SS^*_k$.
An $SS_k$-subdivision with centre $a$ is the union of $k$ internally disjoint $(a,a)$-handles. Therefore, on can decide
if there is an $SS_k$-subdivision with centre $a$ in linear time using a Menger algorithm.
Bang-Jensen et al.~\cite{BHM} showed that $SS^*_3$-\pb\ is polynomial-time solvable.
Their result can be extended to all superstars.

\begin{theorem}\label{superstar}
Let $k$ be a positive integer.
Given digraph $D$ and a vertex $v$ of $D$, on can decide in $O(n^{2k}(n+m))$-time whether $D$ contains an $SS^*_k$-subdivision with centre $v$.
\end{theorem}

\begin{proof}
We describe a procedure that given $v$, a set $X=\{x_1, \dots , x_k\}$ of $k$ distinct outneighbours of $v$ and a set $Y=\{y_1, \dots , y_k\}$ of
$k$ distinct inneighbours of $v$ checks if there is an
$SS^*_k$-subdivision $S$ with centre $v$ such that $\{vx_1, \dots, vx_k\} \cup \{y_1v, \dots, y_kv\} \in A(S)$.
(Note that it is allowed that $X\cap Y\neq \emptyset$.)
Such a subdivision will be called {\it $(v, X, Y)$-forced}.

Applying a Menger algorithm, check whether in $D-v$ there are $k$ disjoint dipaths $P_1, \dots , P_k$ from
$X$ to $Y$.
If not, then $D$ certainly does not contain any $(v, X, Y)$-forced $SS^*_k$-subdivision.
If yes, then check whether there is a dipath $Q$ from some $P_i$ to a different $P_j$ whose internal vertices are not in $\{v\}\cup \bigcup_{i=1}^k P_i$.
This can be done in linear time by running a search on the digraph obtained from $D-v$ by contracting each path $P_i$ into a single vertex.
If such a dipath $Q$ exists, then $P_1, \dots , P_k$ and $Q$ together with $v$ and the arcs from $v$ to $X$ and from $Y$ to $v$ form a $(v, X, Y)$-forced $SS^*_k$-subdivision.
If not, then no $(v, X, Y)$-forced $SS^*_k$-subdivision using the chosen arcs exists, because there is no vertex $x\in X$ with two vertices of $Y$ in its outsection in $D-v$.

Applying this linear-time procedure for every possible pair $(X, Y)$, we can decide in $O(n^{2k}(n+m))$-time whether $D$ contains an $SS^*_k$-subdivision with centre $v$.
\end{proof}

\begin{corollary}\label{cor:superstar}
For every positive integer $k$, $SS_k^*$-\pb\ can be solved in $O(n^{2k+1}(n+m))$-time.
\end{corollary}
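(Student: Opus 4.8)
The plan is to reduce $SS_k^*$-\pb\ to the centre-prescribed version already settled by Theorem~\ref{superstar}, by exhausting over all candidate centres. The key observation is that every $SS_k^*$-subdivision $S$ contains a distinguished original vertex, namely the image of the centre of $SS_k^*$; call it the centre of $S$. This vertex is unambiguously identifiable inside $SS_k^*$, since it is the unique vertex incident with all the symmetric-star edges. Consequently, $D$ contains an $SS_k^*$-subdivision if and only if there exists some vertex $v\in V(D)$ for which $D$ contains an $SS_k^*$-subdivision with centre $v$.

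First I would iterate over every vertex $v$ of $D$ and, for each, invoke the decision procedure of Theorem~\ref{superstar} to test whether $D$ admits an $SS_k^*$-subdivision with centre $v$. If some test succeeds, the algorithm returns `yes'; if all $n$ tests fail, it returns `no'. Correctness is immediate from the observation above: an $SS_k^*$-subdivision exists in $D$ precisely when at least one of the centre-prescribed tests succeeds, because any such subdivision has a well-defined centre, which must coincide with one of the $n$ candidate vertices $v$.

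For the running time, each call to the procedure of Theorem~\ref{superstar} takes $O(n^{2k}(n+m))$ time, and we make exactly $n$ such calls, one per candidate centre. Hence the overall running time is $n\cdot O\!\left(n^{2k}(n+m)\right)=O\!\left(n^{2k+1}(n+m)\right)$, as claimed.

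There is essentially no obstacle in this argument; the only point needing verification is that the centre of a superstar is a uniquely distinguished original vertex of any of its subdivisions, which is clear from the definition of $SS_k^*$. Thus the corollary follows directly from Theorem~\ref{superstar} by trying all $n$ possible centres.
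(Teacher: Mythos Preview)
Your proof is correct and is exactly the intended argument: the paper states the corollary immediately after Theorem~\ref{superstar} without proof, the implicit reasoning being precisely to try all $n$ vertices as the centre and multiply the running time by $n$. There is nothing to add.
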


\begin{proposition}\label{poly}
For $1\leq i\leq 8$,
the digraph $E_i$ depicted in Figure~\ref{fig:easy} is tractable.
\end{proposition}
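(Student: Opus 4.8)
The plan is to prove that each of the eight digraphs $E_1,\dots,E_8$ is tractable by exhibiting, for each one, a polynomial-time decision procedure, relying as much as possible on the machinery already assembled in the excerpt rather than building new algorithms from scratch. The first step is to understand the structure of each $E_i$: since we are in the regime where $G_F$ has at most two edges and $F$ has at least one big vertex (the no-big-vertex case being handled by Corollary~\ref{cor:nobig}, and the three-edge case by Corollary~\ref{NP-2cycle}), each $E_i$ is a small digraph on four vertices containing one or two directed $2$-cycles and at least one big vertex, but \emph{not} containing a directed $2$-cycle whose \emph{both} endpoints are big (otherwise it would be intractable by Corollary~\ref{NP-2cycle}). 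I would begin by reading off from Figure~\ref{fig:easy} the exact arc set of each $E_i$ and identifying which vertices are big.

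The main idea is to reduce each $E_i$ to a shape already known to be tractable. The toolbox gives several reusable reductions. For the digraphs that decompose as a core tractable gadget with a spider glued or added, I would invoke Lemma~\ref{lem:union-spider} (disjoint union with a spider preserves tractability) and Lemma~\ref{lem:add-spider} (gluing a spider at a vertex $u$ preserves tractability provided the ``$u$-vertex-prescribed'' version is polynomial). For digraphs built from a bispindle with two forward and one backward path, or from a spindle, I would use Theorem~\ref{bispindle} and Theorem~\ref{spindle}, which already provide the prescribed-node decision procedures. For the superstar-like pieces I would apply Corollary~\ref{cor:superstar}, and for $W_2$-with-prescribed-vertices I would use Lemma~\ref{W2}. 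Finally, Lemma~\ref{lem:sub-edge} lets me subdivide an arc at a big out-vertex while keeping the prescribed-vertex problem polynomial, which is useful when an $E_i$ is obtained from a smaller tractable digraph by lengthening one arc. The proof would therefore proceed case by case, $i=1,\dots,8$, matching each $E_i$ to one of these templates and citing the corresponding result; in several cases a short argument analogous to the necessary-and-sufficient-condition proofs (as in Lemma~\ref{equiv-Z} or Claim~\ref{claim-W2}) will reduce the existence of an $E_i$-subdivision to a bounded number of Menger-algorithm calls over all choices of the original vertices.

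The step I expect to be the genuine obstacle is verifying, for each individual $E_i$, that the degree bookkeeping in the reduction is sound: one must check that any $E_i$-subdivision $S$ forces the claimed identifications between original vertices and their images, exactly as in the NP-hardness arguments of Proposition~\ref{propNP} but run in the tractable direction. In particular, whenever I fix a candidate big vertex as the center or node and enumerate its prescribed neighbors, I must confirm that no ``shortcut'' subdivision can avoid the structure I am searching for, so that the prescribed-vertex subroutine is both sound and complete. The cases where $E_i$ has two disjoint $2$-cycles, or where the big vertex has a neighbor involved in a $2$-cycle, will require the most care, since there the subdivision paths can interleave in ways that a naive Menger call might miss; I would handle these by first proving a combinatorial equivalence (existence of the subdivision $\iff$ existence of a small family of internally disjoint dipaths/handles with prescribed endpoints) and only then reading off the algorithm. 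Since there are only finitely many four-vertex digraphs to treat and each reduces to an already-established tractable template, the total running time for every $E_i$ is polynomial, which establishes the proposition.
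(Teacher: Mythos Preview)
What you have written is a strategy outline, not a proof; the actual content of Proposition~\ref{poly} is eight separate case-by-case arguments, and none of that work appears in your proposal. More importantly, your primary strategy---reducing each $E_i$ to a spider/spindle/bispindle/superstar/$W_2$ template via Lemmas~\ref{lem:union-spider}, \ref{lem:add-spider}, \ref{lem:sub-edge}, Theorems~\ref{spindle}, \ref{bispindle}, \ref{superstar}, or Lemma~\ref{W2}---cannot succeed for these particular digraphs. If you look at Section~\ref{sec:classify}, the $E_i$ are \emph{precisely} the digraphs left over after all of those template reductions have already been applied: every four-vertex digraph that matched one of the templates is disposed of there directly, and the cases that do not match are referred back to Proposition~\ref{poly}. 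So the toolbox you plan to rely on is, by construction, insufficient for the $E_i$; the whole point of the proposition is to handle the residue. (Your remark about ``cases where $E_i$ has two disjoint $2$-cycles'' also shows you are guessing at structure you cannot see: none of the $E_i$ has two disjoint $2$-cycles, since those fall in Case~2 of the classification and are handled separately.)

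Your fallback---fix a small set of prescribed vertices/arcs, prove that an $E_i$-subdivision exists if and only if a specific bounded family of internally disjoint dipaths exists, then test the latter with Menger algorithms---is exactly what the paper does. But that is where all the difficulty lies, and you have not attempted any of it. In the paper each $E_i$ gets its own ad hoc procedure: one fixes candidate original vertices (and sometimes the first arcs out of a big vertex), runs one or two Menger calls to certify the required independent $(X,Y)$-dipaths, and then gives a short combinatorial argument (typically ``take the last vertex along $Q$ in $P_1\cup P_2$'') showing that the paths can be spliced into an $E_i$-subdivision. For $E_7$ and $E_8$ there is an additional two-stage split according to whether the arc $ab$ is subdivided, the second stage reusing the superstar-detection idea. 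None of these equivalences is deep, but each one has to be stated and checked; your proposal defers that entirely. To turn your plan into a proof you would have to drop the template-matching paragraph and actually carry out the eight Menger-based equivalences.
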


\begin{proof}
$i=1$:
Let us describe a procedure that, given two distinct vertices $a'$ and $d'$ in $D$ and two outneighbours $s_1$, $s_2$ of $a'$ distinct from $d'$, decides whether there is an $E_1$-subdivision with $a$-vertex $a'$ and $d$-vertex $d'$ such that $a's_1$ and $a's_2$ are arcs of $S$. Such a subdivision is said to be {\it $(a's_1,a's_2,d)$-forced}.

We check whether there is a dipath $Q$ from $\{s_1,s_2\}$ to $d'$ in $D-a'$, and with a Menger algorithm we check whether there are two independent $(\{s_1,s_2\},a')$-dipaths $P_1$ and $P_2$ in $D-d'$.
If these three dipaths do not exist, then $D$ contains no $(a's_1,a's_2,d)$-forced $E_1$-subdivision, and we return `no'.
If the three paths $Q,P_1,P_2$ exist, then we return `yes'. Indeed, denoting by $c'$ the last vertex along $Q$
in $P_1\cup P_2$, the digraph $a's_1\cup P_1 \cup a's_2 \cup P_2\cup Q[c',d']$ is an $(a's_1,a's_2,d)$-forced $E_1$-subdivision.

Applying the above procedure for all possible triples
$(a's_1, a's_2,d')$, one solves $E_1$-\pb\ in $O(n^4(n+m))$ time.

\medskip

\noindent
$i=2$:
Let us describe a procedure that given two distinct vertices $a'$ and $d'$ in $D$, a set $U=\{u_1,u_2,u_3\}$ of three outneighbours of $a'$, returns `yes' if it finds an $E_2$-subdivision and returns `no' only if there is no $E_2$-subdivision with $a$-vertex $a'$ and $d$-vertex $d'$ such that $\{a'u_1, a'u_2, a'u_3\}\subseteq A(S)$. Such a subdivision is said to be {\it $(a',d', U)$-forced}.

We check with a Menger algorithm whether $|S_{D-a'}^-(d')\cap U|\geq 2$ and whether there are
three internally disjoint dipaths $P_1, P_2, P_3$ with distinct initial vertices in $U$ and with $t(P_1)=t(P_2)=a'$ and
$t(P3)=d'$.  If these two conditions are not both fulfilled, then $D$ contains no $(a',d',U)$-forced $E_2$-subdivision, and we return `no'.
If these conditions are fulfilled, then we return `yes'.
Indeed consider three dipaths $P_1,P_2,P_3$ as above. Without loss of generality, $s(P_i)=u_i$ for $1\leq i\leq 3$.
Since $|S_{D-a'}^-(d')\cap U|\geq 2$, there exists a $(P_1\cup P_2, P_3)$-dipath in $D-a'$. Let us denote its terminal vertex by $d''$.
Then the union of the directed cycles $a'u_1P_1$, $a'u_2P_2$, and the dipaths $a'u_3P_3[u_3,d'']$, and $Q$ is an $E_2$-subdivision.

Applying the above procedure for all possible triples
$(a',d', U)$, one solves $E_2$-\pb\ in $O(n^5(n+m))$ time.

\medskip

\noindent
$i=3$:
Let us describe a procedure that given two distinct vertices $a'$ and $d'$ in $D$ and two outneighbours $s_1$, $s_2$ of $a'$ distinct from $d'$, returns `yes' when it finds an $E_3$-subdivision and returns `no' only if there is no $E_3$-subdivision with $a$-vertex $a'$ and $d$-vertex $d'$ such that $\{a's_1, a's_2\}\subseteq S$. Such a subdivision is said to be {\it $(a's_1,a's_2,d')$-forced}.

We check whether there is an $(\{s_1,s_2\},d')$ dipath $Q$ in $D-a'$ and whether there are three independent $(\{s_1,s_2,d'\},a')$-dipaths $P_1,P_2,P_3$ in $D$.
If these two conditions are not both fulfilled, then $D$ contains no $(a's_1,a's_2,d')$-forced $E_3$-subdivision, and we return `no'.
If these conditions are fulfilled then we return `yes'.

Indeed, suppose there are four such dipaths $Q,P_1,P_2, P_3$. We may assume without loss of generality that $s(P_3)=d'$. Denote by $c'$ the last vertex along $Q$ in $P_1\cup P_2$, and by $d''$ the first vertex in $Q[c',d']$ which is on $P_3$.
Then the union of the two directed cycles $a's_1P_1a', a's_2P_2a'$ and the dipaths $Q[c',d'']$ and $P_3[d'',a']$ is
an $E_3$-subdivision.

Applying the above procedure for all possible triples
$(a's_1, a's_2, d')$, one solves $E_3$-\pb\ in $O(n^4(n+m))$ time.

\medskip

\noindent
$i=4$:
Let us describe a procedure that, given an arc $sa'$ and a vertex $d'\notin \{s,a'\}$, checks whether there is an $E_4$-subdivision $S$ with $a$-vertex $a'$, $d$-vertex $d'$, and such that $sa'\in A(S)$. Such a subdivision is said to be {\it $(sa', d')$-forced}.

We check with a Menger algorithm whether there are three independent $(a',\{s,d'\})$-dipaths, where two of the paths end up at $d'$ and one at $s$. If three such dipaths do not exist, then there is clearly no $(sa',d')$-forced $E_4$-subdivision, and we return `no'. If three such dipaths exist, then their union together with the arcs $sa'$ form an $(sa',d')$-forced $E_4$-subdivision.

Applying the above procedure for all possible pairs $(sa',d')$, one solves $E_4$-\pb\ in $O(mn(n+m))$ time.

\medskip

\noindent
$i=5$:
Let us describe a procedure that, given two distinct vertices $b',c'$ and a set $S=\{s_1,s_2,s_3\}$ of three distinct inneighbours of $b'$ checks whether there is an $E_5$-subdivision $S'$ with $b$-vertex $b'$, $c$-vertex $c'$, and such that $\{s_1b', s_2b', s_3b'\}\subset A(S')$. Such a subdivision is said to be {\it $(b', c', S)$-forced}.

We check with a Menger algorithm, if there are three independent $(c',S)$-dipaths $P_1, P_2, P_3$, and we check whether there is a $(b',S\setminus\{c'\})$-dipath $Q$ in $D-c'$. If four such dipaths do not exist, then we return `no' because there is no $(b', c', S)$-forced $E_5$-subdivision.
If such dipaths $P_1, P_2, P_3$ and $Q$ exist, then
let $x$ be the first vertex of $Q$ in $P_1\cup P_2\cup P_3$.
Then the union of $P_1, P_2, P_3, Q[b',x]$ and the three arcs $s_1b', s_2b', s_3b'$ form a $(b', c', S)$-forced $E_5$-subdivision.

Applying the above procedure for all possible triples $(a',b', S)$, one solves $E_5$-\pb\ in $O(n^5(n+m))$ time.

\medskip

\noindent
$i=6$:
Observe that every $E_6$-subdivision may be seen as an $E_6$-subdivision in which the arc $dc$ is not subdivided.
Henceforth, by an $E_6$-subdivision, we mean such a subdivision.

Let us describe a procedure that, given two disjoint arcs, $sb'$ and $d'c'$, returns `yes' if it finds an $E_6$-subdivision and returns `no' only if there is no $E_6$-subdivision $S$ with $b$-vertex $b'$, $c$-vertex $c'$, $d$-vertex $d'$ and such that $\{sb', d'c'\}\subseteq A(S)$. Such a subdivision is called {\it $(sb', d'c')$-forced}.

Applying a Menger algorithm, we check whether in $D$ there are three independent $(b', \{s,c',d'\})$-dipaths $P_1, P_2, P_3$ with $t(P_1)=s$ and applying a search we check whether there is a $(c',s)$-dipath $Q$ in $D-\{b',d'\}$.
Clearly, if four such dipaths do not exist, then $D$ contains no $(sb', d'c')$-forced $E_6$-subdivision, so we return `no'.
Conversely, if these dipaths exist, then $Q$ contains a $(c',P_1)$-subdipath $R$.
Let $c''$ be the last vertex along $R$ in $V(P_2\cup P_3)$.
Now in $P_2\cup P_3\cup R[c', c'']\cup d'c'$, there are two internally disjoint $(b',c'')$-dipaths $P'_2, P'_3$.
Thus $P_1\cup sb' \cup P'_2\cup P'_3 \cup R[c'', t(R)]$ is an $E_6$-subdivision, and we return `yes'.

Doing this for every possible pair $(sb', d'c')$, one decides in $O(m^2(n+m))$ time whether $D$ contains an $E_6$-subdivision.

\medskip

\noindent
$i=7$:
We proceed in two stages. We first check whether there is an $E_7$-subdivision in which the arc $ab$ is not subdivided.
Next we check whether there is an $E_7$-subdivision in which the arc $ab$ is subdivided.

In the first stage we decide whether there is an $E_7$-subdivision with $a$-vertex $a'$ and $b$-vertex $b'$ for some arc $a'b'$. To do so, for every dipath $a'uv$ in $D-b'$, we check whether there is
an $E_7$-subdivision with $a$-vertex $a'$ and $b$-vertex $b'$, and which contains the arcs of $\{a'u, uv, a'b'\}$. Such a subdivision is said to be {\it $(a'uv, a'b')$-forced}.

We proceed as follows.
Applying a Menger algorithm, we check whether in $D-u$ there are independent $(\{v,b'\},a')$-dipaths $P_1$ and $P_2$ with $s(P_1)=v$, and applying a search we check whether there is a $(v,b')$-dipath $Q$ in $D-a'-u$.
Clearly, if three such dipaths do not exist, then $D$ contains no $(a'uv, a'b')$-forced $E_7$-subdivision, so we return `no'.
Conversely, if these dipaths exists, then $Q$ contains a $(P_1,P_2)$-subdipath $R$.
Then the union of $P_1$, $P_2$, $R$, $a'uv$, and $a'b'$ is an $E_7$-subdivision, and we return `yes'.
Doing this for every possible pair $(a'uv, a'b')$, one decides in $O(m^2(n+m))$ time that either $D$ contains an $E_7$-subdivision, or that $D$ contains no $E_7$-subdivision in which the arc $ab$ is not subdivided.

\smallskip

Let $G_7$ be the digraph obtained from $E_7$ by subdividing the arc $ab$ into a dipath $awb$ of length $2$.
The second stage consists in deciding whether $D$ contains an $G_7$-subdivision.
We use a procedure similar to the one for detecting superstar subdivision.
Given a pair $\{a'w_1x_1, a'w_2x_2\}$ of dipaths that are disjoint except for their initial vertex $a'$, and two distinct inneighbours $y_1,y_2$ of $a'$
that are not in $\{w_1, w_2\}$ (allowing the possibility that $\{x_1,x_2\}\cap \{y_1,y_2\}\ne \emptyset$), the procedure returns `yes' if it finds an $G_7$-subdivision and returns `no' only if there is
no $G_7$-subdivision with $a$-vertex $a'$ containing all arcs in $A'=\{a'w_1, w_1x_1, a'w_2, w_2x_2, y_1a', y_2a'\}$. Such a subdivision is called {\it $A'$-forced}.

The procedure proceeds as follows.
With a Menger algorithm, we first check whether in $D-\{a',w_1,w_2\}$ there are two disjoint dipaths $P_1, P_2$ from
$\{x_1,x_2\}$ to $\{y_1,y_2\}$.
If not, then $D$ certainly does not contain any $A'$-forced $G_7$-subdivision.
If yes, then check whether there is a $(P_1,P_2)$-dipath $Q$ in $D-\{a',w_1,w_2\}$.
If such a dipath exists, then the union of the paths $P_1, P_2, Q, a'w_1x_1, a'w_2x_2$ and the arcs $y_1a'$ and $y_2a'$
is an $G_7$-subdivision and we return `yes'.
Next, we check if there is a $(P_2,P_1)$-dipath $Q$ in $D-\{a',w_1,w_2\}$.
If $Q$ exists, we return `yes'.
If not, then no $A'$-forced $G_7$-subdivision exists, because there is no vertex $x\in \{x_1,x_2\}$ with two vertices of $\{y_1,y_2\}$ in its outsection in $D-\{a',w_1,w_2\}$. So we return `no'.

This procedure runs in linear time. Thus, running it for
every possible set $A'$, one decides in $O(m^2n^3(n+m))$ time whether $D$ contains an $G_7$-subdivision, which is nothing but an $E_7$-subdivision in which the arc $ab$ is subdivided.

Doing the two stages one after another, we obtain an
$O(m^2n^3(n+m))$-time algorithm for solving $E_7$-\pb.

\medskip

\noindent
$i=8$:
Similarly to the case $i=7$, we proceed in two stages. We first check whether there is an $E_8$-subdivision in which the arc $ab$ is not subdivided.
Next we check whether there is an $E_8$-subdivision in which the arc $ab$ is subdivided.

The first stage is the following.
For every vertex $a'$, every two distinct outneighbours $b', u$, and every inneighbour $t'$ of $a'$ distinct from $b'$ and $u$, we run a procedure that returns `yes' if it finds an $E_8$-subdivision, and return `no' if there is no
$E_8$-subdivision with $a$-vertex $a'$ and $b$-vertex $b'$ and whose arc set includes $\{t'a', a'b',a'u\}$. Such a subdivision is called {\it $(t'a', a'b', a'u)$-forced}.
The procedure is the following.
With a Menger algorithm, we check whether in $D-u$ there are two independent $(b',\{a',t'\})$-dipaths $P_1, P_2$ and whether there is a $(u,t')$-dipath $Q$ in $D-\{a',b'\}$.
If three such paths do not exist, then $D$ certainly contains no $(t'a', a'b', a'u)$-forced $E_8$-subdivision and we return `no'.
If these three paths exist, we then we return `yes'. Indeed let $d'$ be the first vertex along $Q$ in $P_1\cup P_2$.
Now the union of $P_1$, $P_2$, $Q[u,d']$, $a'b'$, $t'a'$ and $a'u$ is an $E_8$-subdivision with $a$-vertex $a'$ and $b$-vertex $b'$.

Doing this for every possible triple $(t'a', a'b', a'u)$,
one can decide in time $O(n^2m(n+m))$ whether there is an $E_8$-subdivision with in which the arc $ab$ is not subdivided.

Observe that an $E_8$-subdivision in which $ab$ is subdivided is an $G_7$-subdivision.
Hence the second phase is exactly the same as the one for $E_7$.

Doing the two stages one after another, we obtain an
$O(m^2n^3(n+m))$-time algorithm for solving $E_8$-\pb.
\end{proof}


\subsection{$E_{9}$ is tractable}

\begin{theorem}\label{E_9}
$E_9$-{\sc Subdivision} can be solved in $O(n^7 (n+m))$ time.
\end{theorem}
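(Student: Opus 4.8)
The plan is to follow the template used for $Z_4$ in Lemma~\ref{equiv-Z}: reduce the existence of an $E_9$-subdivision to the existence of four vertices $a',b',c',d'$ of $D$ — the prospective original vertices corresponding to the vertices $a,b,c,d$ of $E_9$ — together with a prescribed system of internally disjoint and independent dipaths whose pattern of shared endpoints mirrors the arc incidences of $E_9$ in Figure~\ref{fig:easy}. Concretely, I would first prove a characterization lemma asserting that $D$ contains an $E_9$-subdivision if and only if there exist distinct $a',b',c',d'$ admitting such a system, with each condition phrased as the existence of independent $(x,Y)$-dipaths or disjoint $(X,Y)$-dipaths so that it can later be tested by Theorem~\ref{Menger-set}. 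The forward implication is immediate: an $E_9$-subdivision yields the required dipaths simply by reading off its branch paths and the directed cycle arising from the $2$-cycle of $E_9$.

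For the converse I would argue by minimality, exactly as in the proof of Lemma~\ref{equiv-Z}. Among all choices of $a',b',c',d'$ and of realizing dipaths I would fix one minimizing the total number of arcs, and then establish a short list of claims, in the spirit of Claims~\ref{Z1}, \ref{Z2} and \ref{Z3}, each asserting that two dipaths of the system meet only at the vertices where the corresponding arcs of $E_9$ actually share an endpoint. Every such claim is proved by the same exchange step: an illicit common vertex would let me truncate and recombine the dipaths to shorten the system while still satisfying all the conditions for some (possibly relabelled) quadruple, contradicting minimality. Once all spurious intersections are excluded, the union of the dipaths is a genuine $E_9$-subdivision.

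The algorithm then searches over quadruples and checks the conditions. The reason the exponent is $7$ rather than $4$ is that $E_9$ has a vertex of out-degree (resp.\ in-degree) $3$, so a plain Menger call between a vertex and a set does not by itself determine which three arcs at that vertex are used, while the $2$-cycle of $E_9$ forces a directed cycle through two prescribed original vertices that must avoid the remaining branch paths. To cope with this I would additionally guess the three arcs leaving the big vertex — equivalently, fix a triple of its outneighbours — which contributes a factor $O(n^3)$ on top of the $O(n^4)$ quadruples; for each resulting configuration the conditions are verified in $O(n+m)$ time by a constant number of Menger algorithms (Theorem~\ref{Menger-set}) together with a few outsection computations and linear-time searches. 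This yields the claimed $O(n^7(n+m))$ running time.

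The hard part will be the converse direction of the characterization. With a $2$-cycle present, the directed cycle through two original vertices interacts with several branch paths simultaneously, so the exchange arguments are more delicate than for $Z_4$: a length-reducing reroute must be shown neither to destroy the cyclic part nor to fuse two branch paths that are required to stay disjoint. Producing a complete and mutually consistent set of intersection claims — so that their conjunction really forces vertex-disjointness everywhere except at the designated shared endpoints — is where the genuine difficulty lies; the iteration over configurations and the Menger bookkeeping are then routine.
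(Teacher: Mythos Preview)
Your plan misidentifies $E_9$. Its unique big vertex $a$ has $d^+(a)=d^-(a)=2$: it is the centre of the two $2$-cycles $aba$ and $aca$, and the two arcs of $A'(E_9)$ are $bd$ and $dc$. There is no vertex of out- or in-degree $3$, so your $n^7=n^4\cdot n^3$ accounting is wrong from the start; in the paper the factor $n^7$ arises as $n^5$ choices of $(v,s_1,s_2,t_1,t_2)$ --- the $a$-vertex together with two out- and two in-neighbours --- times an $O(n^2(n+m))$ subroutine.

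More substantively, the $Z_4$ template does not transfer. The minimality/exchange argument in Lemma~\ref{equiv-Z} works because $Z_4$ is acyclic: every branch path is directed away from its tail, so an illicit intersection lets you truncate and move an original vertex strictly closer to the source, decreasing total length. In $E_9$ the $a$-vertex has both outgoing and incoming branch paths, and the cross-path $b\to d\to c$ must run from one handle to the other. If, say, an $(a',b')$-dipath meets a $(c',a')$-dipath at $w$, you obtain a new handle through $w$, but the remaining paths $a'\to c'$ and $b'\to a'$ need not combine into a second handle, and a length-$\ge 2$ cross-path from the new handle to whatever is left need not exist. There is no conjunction of Menger-type conditions on a quadruple $(a',b',c',d')$ whose minimal witness is forced to be an $E_9$-subdivision; this is exactly why the paper does \emph{not} follow the $Z_4$ route here.

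What the paper does instead: after fixing $v$ and $(s_1,s_2,t_1,t_2)$, the question is whether $D-v$ contains an $(\{s_1,s_2\},\{t_1,t_2\})$-\emph{shunt}, i.e.\ two disjoint $(\{s_1,s_2\},\{t_1,t_2\})$-dipaths $P,Q$ together with a $(P,Q)$-dipath $R$ of length at least $2$. This is not a Menger condition, and the heart of the proof is Lemma~\ref{lem:shunt-equiv}, which characterises the existence of a shunt \emph{relative to one fixed pair} $P,Q$ via four structural alternatives (a long $(P,Q)$- or $(Q,P)$-dipath; an arc bypass; a loose crossing; or a tight crossing carrying a forward path, a long backward path, or a crossing bypass). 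Each alternative is checkable in $O(n(n+m))$ once $P,Q$ are fixed, yielding the $O(n^2(n+m))$ shunt algorithm of Theorem~\ref{lem:shunt} and hence the overall bound. The hard direction of Lemma~\ref{lem:shunt-equiv} --- that if none of (a)--(d) holds then no shunt exists --- is proved not by shortening a path system but by \emph{maximising} the overlap of a hypothetical shunt with $P\cup Q$ and then analysing how it can leave and re-enter $P\cup Q$; this argument has no analogue in your proposal.
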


The proof relies on the following notion.
A \emph{shunt} is a digraph composed of three dipaths $P$, $Q$ and $R$ such that $R$ has length at least $2$, $s(R) \in P$, $t(R) \in Q$ and $P, Q, R^0$ are disjoint. We frequently refer to a shunt by the triple $(P, Q, R)$.
An {\it $(S,T)$-shunt} is a shunt  $(P, Q, R)$ such that $\{s(P), s(Q)\} = S$ and $\{t(P), t(Q)\} = T$.

We consider the following decision problem.

\medskip

\noindent {\sc shunt}\\
\underline{Input}: A digraph $D$ and four distinct vertices $s_1, s_2, t_1, t_2$.\\
\underline{Question}: Does $D$ contain an $(\{s_1,s_2\}, \{t_1,t_2\})$-shunt?\\

Assume that there are two disjoint dipaths $P, Q$ from $\{s_1, s_2\}$ to $\{t_1, t_2\}$ in $D$.
We now give some necessary and sufficient conditions considering $P$ and $Q$ for $D$ to have an $(\{s_1,s_2\}, \{t_1,t_2\})$-shunt.

For any vertex $x$ in $V(P)$, an {\it $x$-bypass} is a dipath $B$ internally disjoint from $P$ and $Q$ with initial vertex in $P[s(P),x[$ and terminal vertex in $P]x, t(P)]$.
Similarly, for any vertex $x$ in $V(Q)$, an {\it $x$-bypass} is a dipath $B$ internally disjoint from $P$ and $Q$ with initial vertex in $Q[s(Q),x[$ and terminal vertex in $Q]x, t(Q)]$.
If $x$ is the end-vertex of an arc between $P$ and $Q$, then every $x$-bypass is said to be an {\it arc bypass} (figure \ref{fig: bypass}).
 A {\it crossing} (with respect to $P$ and $Q$) is a pair of arcs $\{uv,u'v'\}$ such that $u$ is before $v'$ along $P$ and $u'$ is before $v$ along $Q$.
If $uv'$ is an arc of $P$ and $u'v$ is an arc of $Q$, then the crossing is {\it tight}. Otherwise it is {\it loose}.

 Let $C=\{uv, u'v'\}$ be a tight crossing.
A {\it $C$-forward path} is a dipath internally disjoint from $P$ and $Q$ either with initial vertex $u$ and terminal vertex $v'$, or  with initial vertex $u'$ and terminal vertex $v$ (figure \ref{fig: crossing}). A {\it $C$-backward path} is a dipath internally disjoint from $P$ and $Q$ either with initial vertex in $P[v',t(P)]$ and terminal vertex in $P[s(P), u]$, or  with initial vertex in $Q[v,t(Q)]$ and terminal vertex in $Q[s(Q), u']$.
A {\it $C$-backward arc} 
is an arc that forms a $C$-backward 
path of length $1$.
A {\it $C$-bypass} is an $x$-bypass $B$, where $x$ is an endvertex of a $C$-backward arc and if $x \in P[s(P),u]$ (resp. $Q[s(Q), u']$), $t(B)$ is also in $P[s(P),u]$ (resp. $Q[s(Q), u']$), or if  $x \in P[v', t(P)]$ (resp. $Q[v, t(Q)]$), $s(B)$ is also in $P[v', t(P)]$ (resp. $Q[v, t(Q)]$) (figure \ref{fig: backward}).

\begin{figure*}[!htb]
   \centerline{
   \subfigure[Arc bypass $B$]{\includegraphics[width=4.5cm]{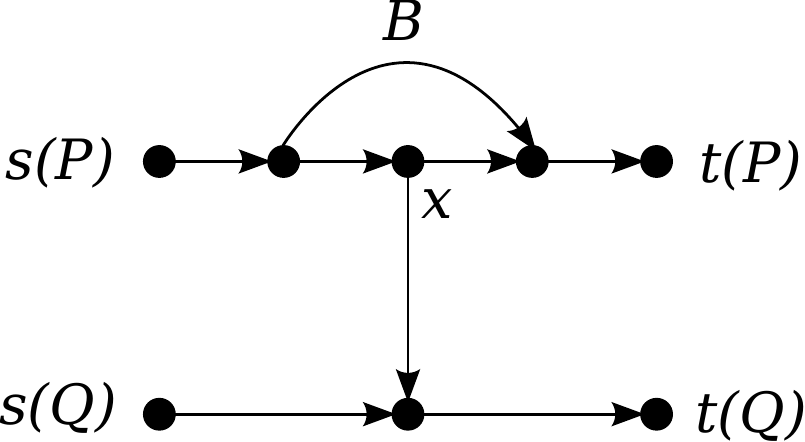}
   \label{fig: bypass}}
   \hfil
   \subfigure[A tight crossing C and a C-forward path in bold]{\includegraphics[width=5.0cm]{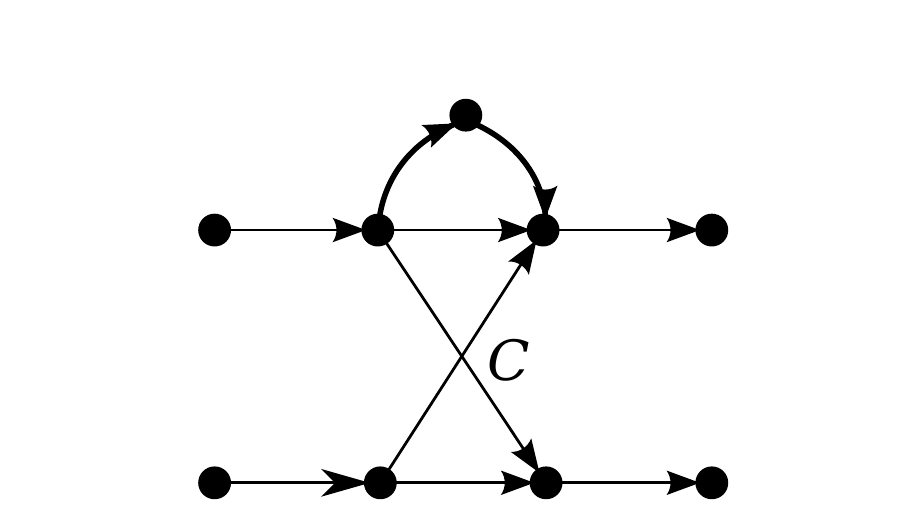}
   \label{fig: crossing}}
   \hfil
   \subfigure[A C-backward arc in bold and a $C$-bypass $B$]{\includegraphics[width=5.0cm]{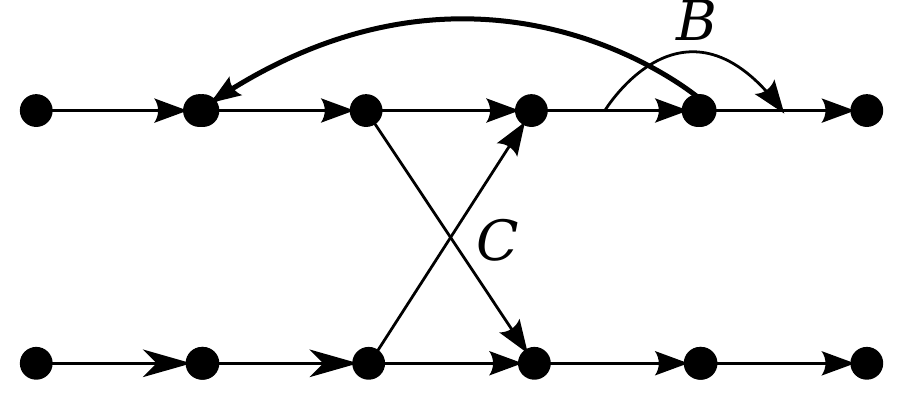}
   \label{fig: backward}}
   }
   \caption[Conditions considering $P$ and $Q$ for $D$ to have an $(\{s_1,s_2\}, \{t_1,t_2\})$-shunt.]{Conditions considering $P$ and $Q$ for $D$ to have an $(\{s_1,s_2\}, \{t_1,t_2\})$-shunt.}
\end{figure*}

 \begin{lemma}\label{lem:shunt-equiv}
  Let $D$ be a digraph, and let $P$ and $Q$ be two disjoint dipaths from $\{s_1,s_2\}$ to $\{t_1, t_2\}$.
  $D$ has an $(\{s_1,s_2\}, \{t_1,t_2\})$-shunt if and only if one of the following holds~:
 \begin{itemize}
 \item[(a)] there is a $(P,Q)$-dipath or a $(Q,P)$-dipath $R$ of length $\ge 2$;
 \item[(b)] there is an arc bypass for some arc $uv$ between $P$ and $Q$;
 \item[(c)] there is a loose crossing;
 \item[(d)] there is a tight crossing $C$ with a $C$-forward path,  a $C$-backward path of length at least $2$ or a crossing bypass.
 \end{itemize}
\end{lemma}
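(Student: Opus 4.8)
The plan is to prove the two implications separately. The ``if'' direction is constructive: for each of the four conditions I would exhibit a shunt whose two rails are obtained from $P$ and $Q$ by local surgery and whose rung has length at least $2$. The ``only if'' direction is the real crux, since an arbitrary shunt need not use $P$ and $Q$ as its rails, and I must translate its mere existence into a statement about the \emph{fixed} paths $P$ and $Q$.

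For the ``if'' direction, condition (a) is immediate: a $(P,Q)$-dipath $R$ of length $\ge 2$ gives the shunt $(P,Q,R)$ at once, and a $(Q,P)$-dipath gives $(Q,P,R)$ after exchanging the roles of the two rails, which preserves the endpoint sets $\{s_1,s_2\}$ and $\{t_1,t_2\}$. For (b), given an arc $uv$ with, say, $u\in P$ and $v\in Q$ together with a $u$-bypass $B$ from $a\in P[s(P),u[$ to $b\in P]u,t(P)]$, I reroute $P$ through $B$ into a rail $P'=P[s(P),a]\cdot B\cdot P[b,t(P)]$ that avoids $u$; then $P[a,u]\cdot uv$ is a rung of length $\ge 2$ from $P'$ to $Q$, internally disjoint from both rails. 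The symmetric $v$-bypass reroutes $Q$ instead.

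For (c) and (d) the key device is to cut $P$ and $Q$ at the two crossing arcs and splice them into the new rails
\[
X=P[s(P),u]\cdot uv\cdot Q[v,t(Q)],\qquad Y=Q[s(Q),u']\cdot u'v'\cdot P[v',t(P)]
\]
which are disjoint precisely because $u$ precedes $v'$ on $P$ and $u'$ precedes $v$ on $Q$, and which again carry endpoint sets $\{s_1,s_2\}$ and $\{t_1,t_2\}$. For a loose crossing one of the middle segments $P[u,v']$, $Q[u',v]$ has length $\ge 2$ and is internally disjoint from $X\cup Y$, so it is the rung. For a tight crossing both middle segments are single arcs, which is exactly where the gadgets of (d) enter: a $C$-forward path supplies the missing rung of length $\ge 2$ between $X$ and $Y$, a $C$-backward path of length $\ge 2$ does the same in the opposite direction, and a $C$-bypass lets one detour a rail around the endpoint of a $C$-backward arc so that that arc can be extended into a rung of length $\ge 2$. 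Throughout I would use the $P\leftrightarrow Q$ and $(P,Q)\leftrightarrow(Q,P)$ symmetries to halve the cases.

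The main obstacle is the ``only if'' direction, because the shunt's rung may lie entirely off $P\cup Q$ and its rails may differ from $P$ and $Q$. I would argue by an extremal choice, selecting among all $(\{s_1,s_2\},\{t_1,t_2\})$-shunts one that minimizes the number of vertices and arcs lying outside $P\cup Q$, with total length as a tiebreaker. The aim is to show that for this minimal shunt the rails follow $P$ and $Q$ except for detours that are themselves bypasses, and that once (a) is excluded every connection the shunt is forced to use between $P$ and $Q$ is a single arc (a longer one would reinstate (a)). Minimality should then prevent any such arc from being detourable, so an arc the shunt must straddle yields an arc bypass, giving (b); two arcs of opposite orientation that are both needed form a crossing, which is (c) when loose, and when tight compels the shunt to carry its extra internal vertex through a forward path, a long backward path, or a bypass of a backward arc, which is exactly (d). The delicate part, where I expect most of the work to go, is the position bookkeeping along $P$ and $Q$ required to certify that the leftover portion of the rung realizes precisely one of the three tight-crossing gadgets with the prescribed endpoints and the required length $\ge 2$.
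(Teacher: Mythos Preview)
Your plan is correct and matches the paper's proof almost exactly: the ``if'' direction uses the same local surgeries (rerouting a rail through a bypass, splicing $P$ and $Q$ through a crossing to form new rails $X,Y$), and the ``only if'' direction is proved by the same extremal-shunt argument followed by the same case analysis on where the rung attaches. The only cosmetic difference is the extremal measure---the paper maximizes $|(A(P)\cup A(Q))\cap(A(P')\cup A(Q'))|$ over the \emph{rails} only, whereas you minimize off-$P\cup Q$ content of the whole shunt---but either choice drives the rails to coincide with $P$ and $Q$ up to the first crossing, after which the bookkeeping you anticipate is exactly what the paper carries out.
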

 \begin{proof}
Let us first price that if one of (a)--(d) holds, then $D$ has an $(\{s_1,s_2\}, \{t_1,t_2\})$-shunt.
 \begin{itemize}
 \item[(a)] If such a dipath $R$ exists, then $(P, Q, R)$ or $(Q,P,R)$ is an $(\{s_1,s_2\}, \{t_1,t_2\})$-shunt.

 \item[(b)] If $B$ is a $u$-bypass and $u \in V(P)$, then  $(P[s(P), s(B)] \cup B \cup P[t(B), t(P)], Q, P[s(B), u] \cup uv)$ is an $(\{s_1,s_2\}, \{t_1,t_2\})$-shunt. There is a shunt constructed analogously if $u\in V(Q)$ and also when $B$ is a $v$-bypass.

 \item[(c)] Let $\{uv,u'v'\}$ be a loose crossing. By symmetry, we may assume that $uv'$ is not an arc.
Then $(P[s(P),u]\cup uv \cup Q[v, t(Q)], Q[s(Q),u']\cup u'v' \cup P[v', t(P)], P[u, v'])$ is an $(\{s_1,s_2\}, \{t_1,t_2\})$-shunt.

 \item[(d)] Let $C=\{uv, u'v'\}$ be a tight crossing.

If there is a $C$-forward path, then  replacing the arc $uv'$ on $P$ or the arc $u'v$ on $Q$ by this $C$-forward path, we obtain two dipaths with a loose crossing,
so we are done by (c).

If there is a $C$-backward path $R$ of length at least $2$,
then $P[s(P),u]\cup uv\cup Q[v, t(Q)]$, $Q[s(Q),u']\cup u'v' \cup P[v', t(P)])$ and $R$ form an $(\{s_1,s_2\}, \{t_1,t_2\})$-shunt.

Suppose now that $B$ is a $C$-bypass.
By symmetry and directional duality, we may assume that $B$ is an $x$-bypass with $t(B)\in P[s(P),u]$.
Let $a=xw$ be the corresponding
$C$-backward arc $a$, where $w \in P[v', t(P)]$.
Then $(Q[s(Q),u']\cup u'v' \cup P[v', t(P)]), P[s(P), s(B)]\cup B \cup P[t(B),u]\cup uv \cup Q[v,t(Q)], wx \cup P[x,t(B)])$ is an $(\{s_1,s_2\}, \{t_1,t_2\})$-shunt.

 \end{itemize}

 \medskip

 Let us now prove the reciprocal by the contrapositive.
Suppose for a contradiction none of (a)--(d) holds, but $D$ contains an $(\{s_1,s_2\}, \{t_1,t_2\})$-shunt $(P',Q',R')$.
Without loss of generality, we may assume that this shunt maximizes $|(A(P)\cup A(Q))\cap (A(P')\cup A(Q'))|$.
Free to swap the names of $P$ and $Q$, we may assume that $s(P)=s(P')$.

Let $u$ be the farthest vertex along $P'$ such that $P'[s(P'), u]$ does not intersect $Q$.
Necessarily $u\in V(P)$ for otherwise there would be a dipath of length at least $2$ from $P$ to $Q$.
In addition, for the same reason, if $u\neq t(P)$, then the outneighbour $v$ of $u$ in $P'$ must be in $Q$.
Hence all vertices of  $P'[s(P'), u]\cap P$ are in $P[s(P), u]$, for otherwise there would be a $u$-bypass in $P$, which would be an arc bypass for $uv$.
Note also that, for every vertex $x$ in $P[s(P),u]-P'$, there is a subdipath of $P'$ which is an $x$-bypass.
So $Q' \cap P[s(P),u]=\emptyset$, for otherwise in $Q'$ there would be a dipath from $Q$ to $P[s(P),u]$ which is either has length at least $2$ or is an arc with an arc bypass in $P'$.
Let $R''$ be the shortest subdipath of $P'$ with initial vertex in $V(P)$ and terminal vertex $s(R')$ if $s(R')\in P'[s(P'), u]$,
and let $R''$ be the path of length $0$ $(s(R'))$ otherwise.
Now, $(P'',Q'', R)=(P[s(P), u]\cup P'[u, t(P')], Q', R''\cup R')$ is an  $(\{s_1,s_2\}, \{t_1,t_2\})$-shunt.
Moreover if $P'[s(P'), u] \neq P[s(P), u]$, then  $P''$ and $Q''$ have more arcs in common with $P$ and $Q$ than $P'$ and $Q'$, which contradicts our choice of $(P', Q', R')$.
Therefore  $P'[s(P'), u]  = P[s(P), u]$.

Let $u'$ be the farthest vertex along $Q'$ such that $Q'[s(Q'), u']$ does not intersect $P$.
As above, one shows that $Q'[s(Q'), u'] = Q[s(Q), u']$.

If $u=t(P)$, then $P'=P$ and necessarily $Q = Q'$. Thus $R'$ is a dipath of length at least $2$ from $P$ to $Q$
as $(P',Q', R')$ is a shunt, which is a contradiction.
Therefore, we may assume that $u\neq t(P)$ and similarly $u' \neq t(Q)$. Furthermore the out-neighbour $v$ of $u$ in $P'$ is in $V(Q)$ and the out-neighbour $v'$ of $u'$ is in $V(P)$.
Since $P'$ and $Q'$ are disjoint, $P'[s(P'), u]  = P[s(P), u]$  and $Q'[s(Q'), u'] = Q[s(Q), u']$, it follows that
$C=\{uv, u'v'\}$ is a crossing with respect to $P$ and $Q$, and thus a tight crossing.

Consider the dipath $R'$.
\begin{itemize}
\item Assume first that $s(R')\in P'[s(P'),u]$.
Let $S$ be the shortest subdipath of $R'\cup Q'[t(R'), t(Q')]$ such that $s(S)=s(R')$ and $t(S)\in V(P)\cup V(Q)$.
Vertex $t(S)$ cannot be in $Q[s(Q), u']$ for otherwise $S = R'$ and it would be a dipath of length at least $2$ between $P$ and $Q$.
Furthermore, if $t(S) \in V(Q)$, $\{s(R')t(S), u'v'\}$ is a loose crossing, since the distance between $u'$ and $t(S)$ in $Q$ is at least $2$ ($u$ is between $s(R')$ and $v$, and $v$ is between $u'$ and $t(S)$).
Therefore $t(S) \in V(P)$ and so $t(S)$ is on $P[v', t(P)]$.
But then $S$ is a forward path or an arc bypass in $P$, a contradiction.

\item Assume now that $s(R') \in P'[v, t(P')]$.

Set  $P^*=Q[s(Q), u']\cup u'v \cup P'[v, t(P')]$  and $Q^*=P[s(P), u]\cup uv' \cup Q'[v', t(Q')]$.
If $t(R')\in Q'[v', t(Q')]$, then $(P^*, Q^*, R')$ is an $(\{s_1,s_2\}, \{t_1,t_2\})$-shunt. But $P^*$ and $Q^*$ have more arcs in common with $P$ and $Q$ than $P'$ and $Q'$, which contradicts our choice of $(P', Q', R')$.
Therefore $t(R')\in Q'[s(Q'), u']$.

Let $S$ be the shortest subdipath of $P'[v,s(R')]\cup R'$ such that $t(S)=t(R')$ and $s(S)\in V(P)\cup V(Q)$.

Assume first that $s(S)\in V(Q)$. Then $S$ is a $C$-backward path. Hence it must have length $1$.
Therefore $s(S)\notin V(P')\cup V(Q')$ because $R'$ has length at least $2$.
Let $u_1$ be the farthest vertex on $P'[v, t(P')]$ that is in $V(Q)$ and such that $P'[v, u_1]$ does not intersect $P$.
Observe that $u_1$ appears before $s(S)$ in $Q$, for otherwise there would be a $C$-bypass in $P'$, as $s(S) \notin P'$.
In particular, $u_1$ is not the terminal vertex of $P'$.
Let $v_1$ be the first vertex after $u_1$ along $P'$ which is on $P\cup Q$.
It must be in $V(P)$ by the choice of $u_1$.
Therefore $u_1v_1$ is an arc because there is no dipath of length at least $2$ between $Q$ and $P$.
Let $u_2$ be the farthest vertex on $Q'[v', t(Q')] \cap P$ such that $Q'[v', u_2]$ does not intersect $Q$.
Then $v_1$ is after $u_2$ along $P$, for otherwise there would be an arc bypass in $P$ for $u_1v_1$. Thus $u_2$ is not the terminal vertex of $Q'$. Let $v_2$ be the first vertex after $u_2$ along $Q'$ which is on $P\cup Q$.
It must be in $V(Q)$ by the choice of $u_2$.
Hence $u_2v_2$ is an arc because there is no dipath of length at least $2$ between $P$ and $Q$.
Moreover, observe that for every vertex $x$ in $Q[v,u_1]-P'$ there is a subdipath of $P'$ which is an $x$-bypass.
Therefore $v_2$ must be in $Q]u_1, t(Q)]$ for otherwise it would be an arc bypass.
Hence $\{u_2v_2, u_1v_1\}$ is a crossing for $P \cup Q$, and so it must be tight.
This implies in particular that $s(S)\in Q[v_2, t(Q)]$.

Set $P^+=P'[s(P'),u]\cup uv' \cup Q'[v', u_2]\cup u_2v_1 \cup P'[v_1, t(P')])$ and
$Q^+=Q'[s(Q), u'] \cup u'v \cup P'[v, u_1] \cup u_1v_2 \cup Q'[v_2, t(Q')])$.
If $s(R')\in  P'[v_1, t(P')])$, then $(P^+, Q^+, R')$ is an  $(\{s_1,s_2\}, \{t_1,t_2\})$-shunt. But
$P^+$ and $Q^+$ have more arcs in common with $P$ and $Q$ than $P'$ and $Q'$, which contradicts our choice of $(P', Q', R')$.
Therefore $s(R')\in P'[(v,u_1)]$. Now $P'[v, s(R')]\cup R'$ contains a subdipath $T$ that  is internally disjoint from $P$ and $Q$  and has initial vertex in $Q[v,u_1]$ and terminal vertex in $P\cup Q[v_2, t(Q)]$, by the locations of $s(R')$ and $s(S)$, $s(S)$ being in $R'$, and since we know that $u_1 \in P'$, $v_2 \in Q'$ and therefore neither of them is in $R'$.
Necessarily, $t(T)\in V(P)$ for otherwise $T$ is an arc bypass. Hence $T$ is an arc.
Furthermore, $t(T)$ could not be in $P[v',u_2]$ for otherwise $Q'$ would contain a $t(T)$-bypass, which would be an arc bypass.
Hence $t(T)\in P]v_1, t(Q)]$ and $\{u_2v_2, T\}$ is a loose crossing, a contradiction.

\medskip

Assume now that $s(S) \in V(P)$.Then it must be in $P[v', t(P)]$.
Since there is no dipath of length at least $2$ from $P$ to $Q$, $S$ has length $1$.
Moreover, since $R'$ has length at least $2$, $s(S)$ is an internal vertex of $R'$, so it is not in $V(P'\cup Q')$.
Let $u_2$ be the farthest vertex on $Q'[v', t(Q')]$ that is in $V(P)$ and such that $Q'[v', u_2]$ does not intersect $Q$.
Then $u_2$ appears before $s(S)$ on $P$, for otherwise there would be an arc bypass for $s(S)t(S)$ in $P$ and so $u_2$ is not the terminal vertex of $Q'$.
Let $v_2$ be the first vertex after $u_2$ along $Q'$ which is on $P\cup Q$.
It must be in $V(Q)$ by the choice of $u_2$, and so on $Q[v, t(Q)]$.
$u_2v_2$ is an arc for otherwise for otherwise there would be a dipath of length $2$ from $P$ to $Q$.
Let $u_1$ be the farthest vertex on $P'[v, t(P')]$ that is also in $V(Q)$ such that $P'[v, u_1]$ does not intersect $P$.
Vertex $u_1$ appears before $v_2$ in $Q$, for otherwise there would be an arc bypass for $u_2v_2$ in $Q$, and so $u_1$ is not the terminal vertex of $P'$.
Let $v_1$ be the first vertex after $u_1$ along $P'$ which is on $P\cup Q$.
It must be in $V(Q)$ by the choice of $u_1$.
Hence $u_1v_1$ is an arc because there is  no dipath of length at least $2$ between $Q$ and $P$.
Moreover, observe that for every vertex $x$ in $P[v',u_2]-Q'$ there is a subdipath of $P'$ which is an $x$-bypass.
 Therefore $v_1$ must be in $P]u_2, t(P)]$ for otherwise it would be an arc bypass.
 Hence $\{u_2v_2, u_1v_1\}$ is crossing for $P \cup Q$, and so it must be tight.
This implies in particular that $s(S)\in P[v_1, t(P)]$.

We then find a contradiction as in the previous case by considering $P^+$ and $Q^+$.
\end{itemize}
This finishes the proof of the Lemma.\end{proof}

\begin{theorem}\label{lem:shunt}
{\sc shunt} can be solved in $O(n^2 (n+m))$ time.
\end{theorem}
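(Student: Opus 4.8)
The plan is to reduce {\sc shunt} to a bounded number of applications of a Menger algorithm, using Lemma~\ref{lem:shunt-equiv} as the structural heart of the argument. First I would run a Menger algorithm to test whether there exist two disjoint dipaths $P$ and $Q$ from $\{s_1,s_2\}$ to $\{t_1,t_2\}$. If no two disjoint such dipaths exist, then certainly no $(\{s_1,s_2\},\{t_1,t_2\})$-shunt exists (a shunt contains two disjoint dipaths $P,Q$ realizing the prescribed source/sink pairing), so I return `no'. Note there are only two possible pairings of sources to sinks, so I may need to test both; each test is a single linear-time Menger computation on an auxiliary digraph. Having fixed one pair of disjoint dipaths $P,Q$, Lemma~\ref{lem:shunt-equiv} tells me that $D$ has a shunt (with this pairing) if and only if one of the four conditions (a)--(d) holds. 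The key point is that these conditions depend only on the \emph{fixed} paths $P$ and $Q$, so it suffices to check each of (a)--(d) efficiently.

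\textbf{Checking the conditions.} For (a), deciding whether there is a $(P,Q)$- or $(Q,P)$-dipath of length at least $2$ internally disjoint from $P\cup Q$ can be done in linear time by contracting $P$ and $Q$ to single vertices and searching the residual digraph $D-(A(P)\cup A(Q))$. For (b), an arc bypass is a bypass of an endvertex of an arc between $P$ and $Q$; there are at most $O(n)$ such arcs, and for each one the existence of a bypass is a single reachability/Menger query of the form ``is there a dipath avoiding the relevant forbidden vertices'', computable in $O(n+m)$ time, giving $O(n(n+m))$ overall. For (c), a loose crossing is detected by examining the $O(n^2)$ pairs of arcs between $P$ and $Q$ (or more cleverly in linear time), each test constant-time. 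For (d), tight crossings are determined by pairs of consecutive-structure arcs; for each tight crossing $C$ (of which there are $O(n)$) I check in linear time for a $C$-forward path, a $C$-backward path of length $\ge 2$, or a $C$-bypass, each again a reachability query in a suitably restricted subdigraph. Summing over all tight crossings gives $O(n(n+m))$.

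\textbf{The main obstacle} I expect is bookkeeping the dependence of conditions (a)--(d) on the \emph{particular} choice of disjoint dipaths $P,Q$: Lemma~\ref{lem:shunt-equiv} is stated for fixed $P$ and $Q$, and it is not a priori obvious that a single witnessing pair $P,Q$ suffices rather than needing to range over all such pairs. The resolution is precisely what the ``only if'' direction of Lemma~\ref{lem:shunt-equiv} provides: whenever a shunt exists, \emph{any} pair of disjoint $(\{s_1,s_2\},\{t_1,t_2\})$-dipaths $P,Q$ already satisfies one of (a)--(d) (the lemma's contrapositive argument shows that if none of (a)--(d) holds for the given $P,Q$, then no shunt exists at all). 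Hence one arbitrary witnessing pair $P,Q$ returned by the initial Menger algorithm is enough, and I do not need to enumerate pairs.

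\textbf{Complexity.} The dominant cost is the arc-bypass and tight-crossing checks, each requiring $O(n)$ linear-time subroutine calls, for $O(n(n+m))$ per pairing. Since there are at most two source-sink pairings to consider and the initial disjoint-path computation is linear, the total running time is $O(n(n+m))$, which is within the claimed $O(n^2(n+m))$ bound. Therefore {\sc shunt} can be solved in $O(n^2(n+m))$ time, proving the theorem.
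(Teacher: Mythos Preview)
Your approach is essentially the paper's: find disjoint dipaths $P,Q$ by a Menger call, then test conditions (a)--(d) of Lemma~\ref{lem:shunt-equiv} by reachability queries. Your observation that the ``only if'' direction of the lemma makes \emph{one} arbitrary pair $P,Q$ sufficient is exactly the point, and your remark about two source--sink pairings is unnecessary (the shunt problem is set-to-set, so a single Menger call handles it).

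There are, however, two counting slips in your complexity analysis. First, the number of arcs between $P$ and $Q$ is not $O(n)$ but can be $\Theta(n^2)$; what is $O(n)$ is the number of \emph{endvertices} of such arcs, and that is what matters for condition~(b). Second, and more seriously, tight crossings are indexed by pairs (an arc of $P$, an arc of $Q$) together with two cross-arcs in $D$, so there can be $\Theta(n^2)$ of them, not $O(n)$; moreover, the $C$-bypass test for a single tight crossing $C$ is not a single reachability query, since one must range over endvertices of $C$-backward arcs. The paper handles this by iterating over pairs $(u,x)$ with $u,x\in P\cup Q$ and doing a bounded number of Menger calls per pair, which is where the $n^2$ factor in $O(n^2(n+m))$ actually comes from. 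Your claimed $O(n(n+m))$ is therefore too optimistic, but since the theorem only asserts $O(n^2(n+m))$, your scheme---once the bookkeeping is repaired---still proves the stated bound.
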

\begin{proof}
We describe a procedure {\tt shunt}$(D, s_1, s_2, t_1, t_2)$, solving {\sc shunt} and estimate its time complexity.
The procedure then check, by a Menger algorithm, if there are two disjoint dipaths $P, Q$ from $\{s_1, s_2\}$ to $\{t_1, t_2\}$, which runs in $O(n+m)$ time.
Observe that the arcs $s_1s_2$ and $s_2s_1$ are useless, so we remove them from $D$ if they exist.
Then we should check if there are paths of length at least $2$, arc bypasses, loose crossings, $C$-forward paths, backward paths of length at least $2$ or crossing bypasses with respect to $P$ and $Q$, according to  Lemma~\ref{lem:shunt-equiv}.
For every vertex $u \in P$ (and any vertex in $Q$, similarly), we do the following: if $u$ has a neighbour in $Q$, we test if there is a path from $P[s(P),u[$ to $P]u, t(P)]$, which would be an arc bypass.
Let $v'$ be the last  vertex of $Q$ such that $uv'$ is an arc (and such that $v'u$ is an arc, similarly).
Then, for a vertex $v$ in $P]u,t(P)]$, we check if there is a vertex $u'$ in $Q[s(Q), v'[$ such that $u'v$ ($vu'$) is an arc.
Then if $u,v$ and $u'v'$ have distance at least $2$ in $P$ and $Q$ respectively, it would be a loose crossing.
Otherwise, if such edges exists there is a tight crossing $C = \{uv', u'v\}$ containing $u$.
We then run a Menger algorithm one more time, to test if there is a dipath from $u$ to $v$ in the digraph induced by $(V(D)-V(P)-V(Q))$, which would be a forward path.
So far, the running time of the algorithm is bounded by $O(n^2 (n+m))$: the complexity of calculating the $P$ and $Q$ initially plus the complexity of, for each vertex in in $P \cup Q$, look for an arc bypass, plus the running time of analysing if each pair of vertices in $P$ or $Q$ are part of a loose crossing and finally plus the time of looking for a forward path.
Then, still considering the same tight crossing $C$, for every vertex $x$ in $P[v, t(P)]$, we check if there is a dipath to some $y$ in $P[v, t(P)]$.
If it is the case and $xy$ is an arc, we then look for dipaths from $P[s(P),y[$ to $P]y,u]$ and from $P[v,x[$ to $P]x,t(P)]$.
This can be done in $O(n^2 (n+m))$: for every pair of vertices $u$ and $x$, we uses Menger algorithm possibly three times to compute the dipaths above. So, {\tt shunt}$(D, s_1, s_2, t_1, t_2)$ runs in $O(n^2 (n+m))$ time in total.
\end{proof}

With Theorem~\ref{lem:shunt} at hands, we now deduce Theorem~\ref{E_9}. We believe that it could also be used to prove the tractability of other digraphs $F$.

\begin{proof}[Proof of Theorem~\ref{E_9}]

For every vertex $v$ of $D$ and for every set of two outneighbours $s_1, s_2$ and two inneighbours $t_1, t_2$ of $v$, we check if  there is a $(\{s_1,s_2\}, \{t_1,t_2\})$-shunt in $D$. Observe that there is an $E_9$-\pb in $D$ in which $v$ is the $a$-vertex if and only if there is a shunt for a pair of outneighbours and a pair of  inneighbours of $v$. So, since there are $n^5$ possible choices for vertex $v$ and its neighbours, and for each of them we apply the procedure {\tt shunt} that runs in $O(n^2 (n+m))$ time, our algorithm decides whether there is an $E_9$-\pb\ in $D$ in $O(n^7 (n+m))$ time.
\end{proof}

\section{Proof of Theorem~\ref{thm:main}}\label{sec:classify}

To prove Theorem~\ref{thm:main}, we review all digraphs $D$ of order $4$, and
determine if they are tractable or intractable or if their status is unknown.

For a digraph $D$, its {\it $2$-cycle graph} $G_D$ is the graph with the same vertex set in which two vertices are linked by an edge if they are in a directed $2$-cycle in $D$.
Thus, the $2$-cycle graph of an oriented graph is an empty graph.
We denote by $A'(D)$ be the set of arcs of $D$ which are not in directed $2$-cycles.

Let $F$ be a digraph of order $4$.
By Corollary~\ref{NP-2cycle}, if $F$ contains a directed $2$-cycle whose vertices are big, then $F$ is intractable. So we may assume that $F$ contains no such $2$-cycles.
In particular, it implies that $G_F$ has at most one vertex of degree at least two.
So $G_F$ has at most three edges.

\medskip

\noindent\underline{Case 0}: $G_F$ has no edges.
Then $F$ is tractable by Theorem~\ref{orient-4}.

\medskip

\noindent\underline{Case 1}: $G_F$ has three edges. Then necessarily, $G_F$ is the star of order $3$. Hence $F$ is either the symmetric star or the superstar of order $4$. In both cases, $F$ is tractable, see Subsection~\ref{subsec:easier}.

\medskip

\noindent\underline{Case 2}: $G_F$ has exactly two edges which are non-adjacent.

If $|A'(F)|\leq 1$, then $F$ has no big vertex, so by Corollary~\ref{cor:nobig} $F$ is tractable.

If $|A'(F)|\geq 2$, then $F$ is either one of $N_1$, $N_2$, $N_3$, $N_4$, $O_1$ and their converses, or $F$ has no big vertex.
In the later case, $F$ is tractable by Corollary~\ref{cor:nobig}.
If $F=N_i$ for $i\in \{1,2,3,4\}$, then $F$ is intractable by Proposition~\ref{propNP}.
We do not know the complexity of $O_1$-subdivision.

\medskip

\noindent\underline{Case 3}: $G_F$ has exactly two edges which are adjacent.

If $A'(F)$ is empty, then $F=SS_2+K_1$, where $K_1$ is the digraph on one vertex.  As discussed in Subection~\ref{sec:easy}, $SS_2$ is tractable. Thus, by Lemma~\ref{lem:union-spider}, $F$ is tractable.

If $|A'(F)|=1$, then $F$  either is $SS^*_2+K_1$, or $E_1$ or the converse of $E_1$, or is obtained from $SS_2^*$ by gluing an arc on its centre. Now $SS^*_2+K_1$ is tractable by Corollary~\ref{cor:superstar} and Lemma~\ref{lem:union-spider}; $E_1$ (and thus its converse) is tractable by Proposition~\ref{poly};
if $F$ is obtained from $SS_2^*$ by gluing an arc on its centre, then it is tractable by Theorem~\ref{superstar} and by Lemma~\ref{lem:add-spider}.

If $|A'(F)|=2$, then $F$ is either $E_2$, $E_3$, $E_9$, $O_2$ or one of their converses.
If $F\in \{E_2, E_3, E_9\}$, then it is tractable by Proposition~\ref{poly}. If $F= O_2$, then we do not know.

If $|A'(F)|=3$, then $F$ is either $N_5$, $N_6$, $O_3$ or one of their converses.
If $F \in \{N_5,N_6\}$, then it is intractable by Proposition~\ref{propNP}. The complexity of $O_3$-\pb\ is still unknown.

\medskip

\noindent\underline{Case 4}: $G_F$ has exactly one edge.

If $F$ has no big vertices, then, by Corollary~\ref{cor:nobig}, $F$ is tractable.
Henceforth, we may assume that $F$ has a big vertex, i.e. a vertex with in-degree or out-degree at least $3$ or both in-degree and out-degree equal to $2$.
Observe that it implies that $F$ is connected and $|A'(F)|\geq 2$.

$|A'(F)|=2$, $F$ obtained from $\vec{C_2}$ by gluing a spider on one its vertices. Then $F$ is tractable by Lemma~\ref{lem:add-spider}.

If $|A'(F)|=3$, then we distinguish several subcases according to the position of the arcs of $A'(F)$ relatively to the directed $2$-cycle $C$ of $F$.

\begin{itemize}
\item $A'(F)$ induces an orientation of a star.
Then $F$ is obtained from $W_2$ or its converse by gluing an arc on its centre.
Thus  $F$ is tractable by Lemma~\ref{lem:add-spider}.

\item $A'(F)$ induces an oriented path whose first vertex is a vertex of $C$ and whose third vertex is the other vertex of $C$.
Then $F$ is obtained either from the bispindle $B(2,1;1)$ by gluing an arc on one of its nodes, or from $W_2$ or its converse by gluing an arc on one of its vertices.
In both cases, $F$ is tractable by Lemma~\ref{lem:add-spider} and Theorem~\ref{bispindle} and Lemma~\ref{W2}.

\item $A'(F)$ induces an oriented $3$-cycle.
If this cycle is directed, then $F$ is a {\it windmill}, that is a subdivision of a symmetric star.
Bang-Jensen et al. \cite{BHM} proved that windmills are tractable, so $F$ is tractable.
If this cycle is not directed, then $F$ is either $E_4$ or its converse, or $N_{7}$.
If $F$ is $E_4$ or its converse, then it is tractable by Proposition~\ref{poly}.
If $F=N_{7}$, then it is intractable by Proposition~\ref{propNP}.
\end{itemize}

If $|A'(F)|=4$, then it is either $N_8$, $N_{9}$, $E_5$, $E_6$, $E_7$, $E_8$, $O_4$,  $O_{5}$,  or one of their converses.
If $F$ is  $N_8$ or $N_{9}$, then it is intractable by Proposition~\ref{propNP}.
If $F$ is $E_5$, $E_6$, $E_7$, or $E_8$,  then it is tractable by Proposition~\ref{poly}.
The complexity of $O_4$-\pb\ and $O_5$-\pb\ is still open.

This concludes the proof of Theorem~\ref{thm:main}.

\end{document}